\title[Reflective centers of module categories]{Reflective centers of module categories \\and  quantum K-matrices}
\author{Robert Laugwitz}
\address{School of Mathematical Sciences,
University of Nottingham, University Park, Nottingham, NG7 2RD, UK}
\email{robert.laugwitz@nottingham.ac.uk}
\author{Chelsea Walton}
\address{Department of Mathematics, Rice University,
P.O. Box 1892, Houston, TX 77005-1892, USA}
\email{notlaw@rice.edu}
\author{Milen Yakimov}
\address{Department of Mathematics, Northeastern University,
360 Huntington Ave., Boston, MA 02115, USA
and International Center for Mathematical Sciences, Institute of Mathematics and Informatics,
Bulgarian Academy of Sciences, 
Acad. G. Bonchev Str., Bl. 8, 
Sofia 1113, Bulgaria
}
\email{m.yakimov@northeastern.edu}
\newcommand{\stkout}[1]{\ifmmode\text{\sout{\ensuremath{#1}}}\else\sout{#1}\fi}
\definecolor{forest}{rgb}{0.0, 0.5, 0.0}
\newcommand{\bijar}[1][]{%
 \ar[#1]
 \ar@<0.7ex>@{}[#1]|-*=0[@]{\sim}} 
\DeclareMathAlphabet{\cal}{OMS}{zplm}{m}{n}
\DeclareMathAlphabet{\mathsf}{OT1}{cmss}{m}{n} 
\newcommand{\leftexpsub}[3]{{\vphantom{#3}}^{#1}_{#2}{#3}}
\newcommand{\lYD}[1]{\leftexpsub{#1}{#1}{\mathsf{YD}}}
\newcommand{\ov}[1]{\overline{#1}}
\newcommand{\lmod}[1]{#1\text{-}\mathsf{mod}}
\newcommand{\lcomod}[1]{#1\text{-}\mathsf{comod}}
\newcommand{\act}{\triangleright}
\DeclareFontFamily{U}{stixscr}{}
\DeclareFontShape{U}{stixscr}{m}{n}{<-> s*[0.8] stix-mathscr}{}
\DeclareRobustCommand{\blkact}{%
  \mathrel{\text{\usefont{U}{stixscr}{m}{n}\symbol{"D1}}}%
}
\DeclareRobustCommand{\grayact}{%
  {\color{gray}\mathrel{\text{\usefont{U}{stixscr}{m}{n}\symbol{"D1}}}}%
}
\newcommand\diam{\mathbin{\text{\scalebox{0.6}{$\blacklozenge$}}}}
\newcommand{\coev}{\mathsf{coev}}
\newcommand{\Drin}{\operatorname{Drin}}
\newcommand{\ev}{\mathsf{ev}}
\newcommand{\End}{\operatorname{End}}
\newcommand{\Hom}{\operatorname{Hom}}
\newcommand{\ide}{\mathsf{Id}}
\newcommand{\isomorph}{\stackrel{\sim}{\to}}
\newcommand{\one}{\mathds{1}}
\newcommand{\ract}{\triangleleft}
\newcommand{\Vect}{\mathsf{Vec}}
\newcommand{\cC}{\cal{C}}
\newcommand{\cE}{\cal{E}}
\newcommand{\cM}{\cal{M}}
\newcommand{\cN}{\cal{N}}
\newcommand{\cZ}{\cal{Z}}
\newtheoremstyle{defstyle}
  {0.5cm}                   
  {0.5cm}                   
  {\normalfont}           
  {}     
  {\normalfont\bfseries}  
  {:}                     
  {0.3cm}              
  {\thmname{#1}\thmnumber{ #2}\thmnote{ (#3)}}
\numberwithin{equation}{section}
\newtheorem*{rep@theorem}{\rep@title}
\newcommand{\newreptheorem}[2]{%
\newenvironment{rep#1}[1]{%
 \def\rep@title{#2 \ref{##1}}%
 \begin{rep@theorem}}%
 {\end{rep@theorem}}}
\newtheorem{theorem}{Theorem}[section]
\newtheorem{proposition}[theorem]{Proposition}
\newtheorem{corollary}[theorem]{Corollary}
\newtheorem{lemma}[theorem]{Lemma}
\newtheorem{theorem*}{Theorem}
\theoremstyle{definition}
\newtheorem{definition}[theorem]{Definition}
\newtheorem{example}[theorem]{Example}
\newtheorem{remark}[theorem]{Remark}
\let\c@equation\c@theorem  
\numberwithin{equation}{section}
\subjclass[2020]{18M15, 16T05}
\keywords{braided module category, quantum $K$-matrix, quasitriangular comodule algebra, reflective algebra, reflective center}
\begin{document}

\maketitle




\begin{abstract}
Our work is motivated by obtaining solutions to the quantum reflection equation (qRE) by categorical methods. To start, given a braided monoidal category $\cC$ and  $\cC$-module category $\cM$, we introduce  a version of the Drinfeld center $\cZ(\cC)$ of $\cC$ adapted for $\cM$;  we refer to this category as the {\it reflective center} $\cE_\cC(\cM)$ of $\cM$. 
Just like $\cZ(\cC)$ is a canonical braided monoidal category attached to $\cC$, we show that $\cE_\cC(\cM)$ is a canonical braided module category attached to $\cM$; its properties are investigated in detail.

Our second goal pertains to when  $\cC$ is the  category of modules over a quasitriangular Hopf algebra $H$, and $\cM$ is the  category of modules over an $H$-comodule algebra $A$. We show that the reflective center $\cE_\cC(\cM)$ here is equivalent to a category of modules over an explicit algebra, denoted by $R_H(A)$, which we call the {\it reflective algebra} of $A$. This result is akin to $\cZ(\cC)$  being represented by the Drinfeld double $\Drin(H)$ of~$H$. We also study the  properties of reflective algebras.

Our third set of results is also in the Hopf setting above. We show that reflective algebras are quasitriangular $H$-comodule algebras, and examine their corresponding quantum $K$-matrices; this yields solutions to the qRE.  We also establish that the reflective algebra $R_H(\Bbbk)$ is an initial object in the category of quasitriangular $H$-comodule algebras, 
where $\Bbbk$ is the ground field. The case when $H$ is the Drinfeld double of a finite group is illustrated.
\end{abstract}

\setcounter{tocdepth}{1}

\begin{changemargin}{2cm}{2.5cm} 
{\footnotesize \tableofcontents}
\end{changemargin}


\section{Introduction}\label{sec:intro}

Quasitriangular Hopf algebras and their 
 (universal) quantum $R$-matrices, introduced by Drinfeld \cite{Dri}, 
play a fundamental role in many areas of mathematics and mathematical physics, such as low dimensional topology, representation theory, quantum field theory and exactly solvable models. More generally, Joyal and Street \cite{JoyalStreet} introduced the notion of braided monoidal categories, which, similarly, are central objects for the categorical foundations of numerous studies. There are well known ways to construct both structures:
\begin{enumerate}
    \item Given a Hopf algebra $H$, its Drinfeld double $\Drin(H)$ is a quasitriangular Hopf algebra with an explicit $R$-matrix.
    \item For a monoidal category $\cC$, one constructs its Drinfeld center $\cZ(\cC)$, which is a braided monoidal category.
\end{enumerate}
The two constructions work in tandem: The Drinfeld center of the 
module category  of a finite dimensional Hopf algebra $H$ is isomorphic to the module category of its Drinfeld double.  

Going a step further, on the categorical side, Brochier \cite{Brochier13} introduced the notion of a braided module category over a braided monoidal category. On the Hopf algebra side, Kolb \cite{Kolb20} defined the notion of a quasitriangular (left) $H$-comodule algebra $A$ of a quasitriangular Hopf algebra $H$; such an algebra is equipped with a (universal) quantum $K$-matrix $K \in H \otimes A$. There are broad parallels between universal quantum $R$-matrices and $K$-matrices:
\begin{enumerate}
\item[(a)] Universal quantum $R$-matrices and $K$-matrices automatically satisfy the quantum Yang--Baxter and reflection equations, respectively. 
\item[(b)] The former give rise to representations of the Artin braid groups of type $A$, while the latter give rise to representations of the Artin braid groups of type $B$.
\item[(c)] The former are used in studying exactly solvable models in statistical mechanics without boundary, while the latter are used for solving models with boundary.
\end{enumerate}
In summary, following  Balagovic's presentation \cite{Balagovic-slides} on her joint paper with  Kolb \cite{BalagovicKolb}: 

\medskip

\begin{center}
{\small
\begin{tabular}{llll}
\hline\\[-.7pc]
If you like: &&& ... then you should also like:\\[.2pc]
1. Quantum enveloping algebras  &&& 1. Quantum symmetric pairs \\[.05pc]
2. Universal quantum $R$-matrices &&& 2. Universal quantum $K$-matrices\\[.05pc]
3. The quantum Yang--Baxter equation &&& 3. The quantum reflection equation\\[.05pc]
4. Braided tensor categories &&& 4. Braided module categories\\[.4pc]
\hline
\end{tabular}
}
\end{center}
\medskip

The most important class of quasitriangular comodule algebras that was studied to date is the class of quantum symmetric pair coideal subalgebras, introduced in the foundational works of Letzter 
\cites{Letzter1,Letzter2}. These are quantum analogs of the pairs $(U(\mathfrak{k}), U(\mathfrak{g}))$ where $\mathfrak{g}$ is a complex simple Lie algebra (or more generally a symmetrizable Kac--Moody algebra) and $\mathfrak{k}$ is a symmetric Lie subalgebra (the fixed point of an involutive automorphism of $\mathfrak{g}$). The quasitriangularity for this class of comodule algebras was established by recursively building a quantum $K$-matrix using the Lusztig bar involution \cites{BaoWang18b,BalagovicKolb,BaoWang18a,AppelVlaar}. For coideal subalgebras of arbitrary Drinfeld doubles of bosonizations of Yetter--Drinfeld modules of diagonal types, where bar involutions need not exist, quantum $K$-matrices were constructed from star products on partial bosonizations of Nichols algebras \cite{KolbYakimov}.

Given the vast applications of quantum $K$-matrices and braided module categories, one can ask the following two questions:
\begin{enumerate}
\item[(Q1)] Is there a version of the Drinfeld double construction (1) with an input of an $H$-comodule algebra for a quasitriangular Hopf algebra and an output a quasitriangular $H$-comodule algebra with an explicit quantum $K$-matrix? 
\item[(Q2)] Is there a version of the Drinfeld center construction (2) with an input a module category of a braided monoidal category $\cC$ and an output a braided module category of $\cC$?
\end{enumerate}

The goals of this paper are to fully resolve both questions. This leads to strong methods for the construction of quasitriangular comodule algebras, quantum $K$-matrices, and braided module categories that can 
be applied in broad generality. 
The following table summarizes our constructions and notions: 
\medskip

\begin{center}
{\small
\begin{tabular}{llll}
\hline\\[-.7pc]
Classical constructions and notions: &&& Our constructions and notions:\\[.2pc]
5. Drinfeld centers of tensor categories &&& 5. {\it Reflective centers} of module categories \\[.05pc]
6. Yetter--Drinfeld modules  &&& 6. Doi--Hopf modules\\[.05pc]
7. Drinfeld doubles of Hopf algebras &&& 7. {\it Reflective algebras} of comodule algebras \\[.4pc]
\hline
\end{tabular}
}
\end{center}
\medskip

Before stating the precise formulation of our results, we note that all linear structures are over an  algebraically closed field $\Bbbk$. For a $\Bbbk$-algebra $A$, let $\lmod{A}$ denote the category of left $A$-modules.

\medskip

To proceed with the aims above,  take a braided monoidal category $\cC$ and a left $\cC$-module category $\cM$. 
In Definition~\ref{def:ECM}, we define the {\it reflective center of $\cM$ with respect to $\cC$},  denoted by $\cE_\cC(\cM)$, and motivated by the construction of the Drinfeld center $\cZ(\cC)$ of $\cC$. 
The following results are established for $\cE_\cC(\cM)$
in parallel of the known properties of $\cZ(\cC)$:

\vspace{7pt}

\noindent {\bf Theorem~A} (Proposition~\ref{prop:ECM-brmodcat}, Corollary~\ref{cor:ECMprops}){\bf .} Retain the notation above. Then the reflective center $\cE_\cC(\cM)$ has the following properties.
\begin{enumerate}[\upshape (a)]
\item {\it $\cE_\cC(\cM)$ is a braided left $\cC$-module category.}
\smallskip
\item  {\it $\cE_\cC(\cM)$ is abelian when $\cM$ is exact, is finite when $\cC$ is finite and $\cM$ is exact, and is semisimple when $\cC$ and $\cM$ are finite and semisimple.}
\smallskip
\item {\it $\cE_\cC(\cM)$ is also a left $\cZ(\cC)$-module category.} \qed
\end{enumerate}

\vspace{7pt}

Now for the rest of the introduction, take $\cC = \lmod{H}$ and $\cM = \lmod{A}$, for:
\begin{itemize}
\item  $H$  a finite-dimensional quasitriangular Hopf algebra over $\Bbbk$, 

\smallskip

\item $A$ a left $H$-comodule algebra over $\Bbbk$. 
\end{itemize}

\medskip

Our first main result for this Hopf setting is given below. This is achieved via Theorem~A(a) and by applying results in Section~\ref{sec:bmcfunc} on transferring a braided module category structure across an equivalence of categories.

\vspace{7pt}

\noindent {\bf Theorem B} (Lemma~\ref{lem:ECM-DH-brmod}, Proposition~\ref{prop:ECM-DH-brmod}, Theorem~\ref{thm:ECM-DH-brmod-2}){\bf .} {\it There exists a category of Doi--Hopf modules ${}_A^{\widehat{H}} \mathsf{DH}(H)$ for  a certain left $H$-module coalgebra $\widehat{H}$ defined in Section~\ref{sec:equivalence}, and a certain  algebra $R_H(A)$ defined in Section~\ref{sec:equivalence2},
such that 
\begin{equation} \label{eq:EHA-intro}
\cE_{\lmod{H}}(\lmod{A}) \; \cong \; {}_A^{\widehat{H}} \mathsf{DH}(H) \; \cong \; \lmod{R_H(A)}
\end{equation}
as braided left module categories over $\lmod{H}$. See Figure~\ref{fig:thm-sec6} for the location of these actions.} \qed

\vspace{7pt}

The $H$-module coalgebra $\widehat{H}$ is defined in \Cref{def:hatH}, and as discussed in Remark~\ref{rem:Majid},  it is a version of Majid's {\it transmuted Hopf algebra} constructed in \cite{Majid1991}. 

\medskip

 We refer to $R_H(A)$ as the {\it reflective algebra of $A$ with respect to $H$}.  It is defined as a crossed product algebra, $A \rtimes_H (\widehat{H}^*)^{\textnormal{op}}$, as described at the beginning of Section~\ref{sec:double-DH}. It plays an analogous role for reflective centers as the Drinfeld double $\Drin(H)$ of $H$ does for the Drinfeld center $\cZ(\lmod{H})$.

\begin{figure*}[h!]
{\small
\[
\begin{array}{|c|c|c|}
\hline
&&\\[-.7pc]
\xymatrix@C=1pc{
 \lmod{H} \hspace{-.2in}
&
\ar@/^.7pc/@<-3pt>[r]^(.52){\act}_(.98){\textnormal{br.}}
&
&
\hspace{-.18in}
\cE_{\lmod{H}}(\lmod{A}) 
}
&
\xymatrix@C=1pc{
 \lmod{H} \hspace{-.2in}
&
\ar@/^.7pc/@<-3pt>[r]^(.52){\blkact}_(.98){\textnormal{br.}}
&
&
\hspace{-.18in}
{}_A^{\widehat{H}} \mathsf{DH}(H) 
}
 &
 \xymatrix@C=1pc{
 \lmod{H} \hspace{-.2in}
&
\ar@/^.7pc/@<-3pt>[r]^(.52){\grayact}_(.98){\textnormal{br.}}
&
&
\hspace{-.18in}
\lmod{R_H(A)} 
}
 \\[.4pc]
\text{[Lemma~\ref{lem:ECM-DH-brmod}]}
&  \text{[Proposition~\ref{prop:ECM-DH-brmod}]}
&   \text{[Theorem~\ref{thm:ECM-DH-brmod-2}]} \\[.4pc]
\hline
\end{array}
\]
}

\caption{Isomorphic  braided module categories over $\lmod{H}$}
\label{fig:thm-sec6}
\end{figure*}
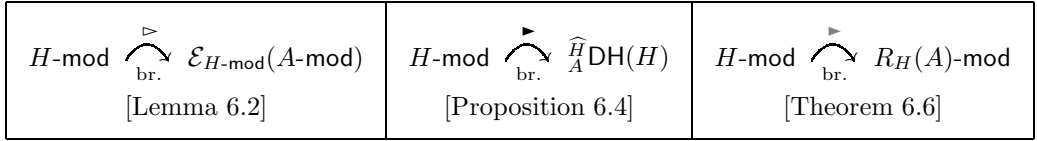

We obtain the following consequence of Theorem~B.

\vspace{7pt}

\noindent {\bf Corollary~C} (Corollary~\ref{cor:RHA-comod}){\bf.} {\it The reflective algebra $R_H(A)$ is a quasitriangular left $H$-comodule algebra, with an explicit quantum $K$-matrix  given in terms of a dual basis of $H$. }
 \qed

\vspace{7pt}

The reflective algebra $R_H(\Bbbk)$ for the canonical left $H$-comodule algebra $\Bbbk$
is of particular interest due to the following result.

\vspace{7pt}

\noindent {\bf Theorem~D} (Theorem~\ref{thm:init-obj}){\bf .} {\it The reflective algebra $R_H(\Bbbk)$ and its quantum $K$-matrix from Corollary~C is an initial object in the category of quasitriangular left $H$-comodule algebras.} \qed

\vspace{7pt}

The results above are illustrated in Section~\ref{sec:RA-ex} in the case when $H$ is the Drinfeld double of a finite group, $\Drin(G)$, and for the left $\Drin(G)$-comodule algebra $\Bbbk$.

\vspace{7pt}

Finally, using Theorem~A(c) and the braided isomorphism between $\cZ(\lmod{H})$ and $\lmod{\Drin(H)}$, we get a module category action of $\lmod{\Drin(H)}$ on $\lmod{R_H(A)}$. This yields the result below.

\vspace{7pt}

\noindent {\bf Proposition~E} (Proposition~\ref{prop:Drin-RHA-comod}){\bf .} {\it The reflective algebra $R_H(A)$ is a left $\Drin(H)$-comodule algebra, with an explicit comodule structure given in terms of the quantum $R$-matrix of $H$.} \qed

\vspace{7pt}


\section{Preliminaries on (braided) monoidal categories}\label{sec:back-C}

In this section, we review terminology pertaining to braided monoidal categories. We refer the reader to  \cite{EGNO} and \cite{TV} for general information. We first review background material on monoidal categories  in Section~\ref{sec:monoidal}. Then in  Section~\ref{sec:braided}, we recall braided monoidal categories, the Drinfeld center construction of a braided category from a monoidal category, and the connection to quantum $R$-matrices in Hopf case. We assume that all categories here are {\it locally small} (i.e., the collection of morphisms between any two objects is a set).

\subsection{Monoidal categories} \label{sec:monoidal}

We refer the reader to  \cite{EGNO, TV, Walton}
for further details. 

\smallskip

{\bf Monoidal categories.} A {\it monoidal category} consists of a category $\cC$ equipped with a bifunctor $\otimes\colon  \cC \times \cC \to \cC$, a natural isomorphism $a_{X,Y,Z}\colon  (X \otimes Y) \otimes Z \overset{\sim}{\to} X \otimes (Y \otimes Z)$ for $X,Y,Z \in \cC$, an object $\one \in \cC$, and natural isomorphisms $l_X\colon  \one \otimes X \overset{\sim}{\to} X$ and $r_X\colon  X  \otimes \one \overset{\sim}{\to} X$ for  $X \in \cC$, satisfying pentagon and triangle axioms.

A {\it (strong) monoidal functor}  between monoidal categories $(\cC, \otimes, \one, a, l, r)$ and $(\cC', \otimes', \one', a', l', r')$ is a functor $F\colon  \cC \to \cC'$ equipped with a natural isomorphism $F_{X,Y}\colon  F(X) \otimes' F(Y) \isomorph F(X \otimes Y)$ for $X,Y \in \cC$, and an isomorphism $F_0\colon  \one' \isomorph F(\one)$ in $\cC'$, satisfying associativity and unitality constraints. \linebreak

\vspace{-.2in}

An {\it equivalence (resp., isomorphism) of  monoidal categories} is provided by a  monoidal functor between the two monoidal categories that yields an equivalence (resp., isomorphism) of the underlying categories; it is denoted by $\overset{\otimes}{\simeq}$ (resp., $\overset{\otimes}{\cong}$).

\smallskip

{\bf Opposite monoidal category.} Given a monoidal category $(\cC, \otimes, \one, a, l, r)$, its {\it opposite monoidal category} is defined as $\cC^{\otimes \textnormal{op}}:=(\cC, \otimes^{\textnormal{op}}, \one, a^{\textnormal{op}}, l^{\textnormal{op}}, r^{\textnormal{op}})$, with $X \otimes^{\textnormal{op}} Y := Y \otimes X$ and $a^{\textnormal{op}} _{X,Y,Z} := a^{-1}_{Z,Y,X}$ and $l_X^{\textnormal{op}} = r_X$  and $r_X^{\textnormal{op}} = l_X$, for all $X,Y,Z \in \cC$.

\smallskip

{\bf Rigidity.} A monoidal category $(\cC, \otimes, \one)$ is {\it rigid} if it comes equipped with left and right dual objects,  i.e., for each $X \in \cC$ there exist, respectively, an object $X^* \in \cC$ with co/evaluation maps $\ev_X^L \colon  X^* \otimes X \to \one$ and $\coev_X^L \colon  \one \to  X \otimes X^*$, and an object ${}^*X \in \cC$ with co/evaluation maps $\ev_X^R \colon  X \otimes {}^*X \to \one$, $\coev_X^R \colon \one \to  {}^*X \otimes X$,  satisfying  coherence conditions of left and right duals.

\smallskip

{\bf Linearity over $\Bbbk$, finiteness.} We now discuss certain $\Bbbk$-linear monoidal categories. A $\Bbbk$-linear abelian category $\cC$ is {\it locally finite} if, for any two objects $V,W$ in $\cC$, $\Hom_{\cC}(V,W)$ is a finite-dimensional $\Bbbk$-vector space and every object has finite length. A locally finite category $\cC$ is {\it finite} if there are enough projectives and finitely many isomorphism classes of simple objects. Equivalently, a $\Bbbk$-linear category $\cC$ is  finite if it is equivalent to the category of finite-dimensional modules over a finite-dimensional $\Bbbk$-algebra.

\smallskip

{\bf Tensor and fusion categories.} A  {\it tensor category} is an abelian, $\Bbbk$-linear, locally finite, rigid, monoidal category $(\cC, \otimes, \one)$ such that $\otimes$ is $\Bbbk$-linear in each slot and $\End_\cC(\one) \cong \Bbbk$.
 A {\it tensor functor} is a $\Bbbk$-linear, exact, faithful,  monoidal functor $F$ between  tensor categories $\cC$ and $\cC'$, with $F(\one) = \one'$. A  tensor category is said to be a {\it fusion category} if it is both finite and semisimple. 
 If $\cC$ is a tensor (resp., finite tensor, fusion) category, then is so $\cC^{\otimes \textnormal{op}}$.
 
\smallskip

{\bf Deligne tensor product.} Let $\cC$ and $\cC'$ be two tensor categories. Then the {\it Deligne tensor product} of $\cC$ and $\cC'$ is a $\Bbbk$-linear, abelian category $\cC \boxtimes \cC'$ endowed with a functor $\boxtimes: \cC \times \cC' \to \cC \boxtimes \cC'$ that is $\Bbbk$-linear and right exact in each variable, and is universal among such functors out of $\cC \times \cC$.  We also have that $\cC \boxtimes \cC'$ is a monoidal  category where $(X \boxtimes X') \otimes^{\cC \boxtimes \cC'} (Y \boxtimes Y') := (X \otimes^{\cC} Y) \boxtimes (X' \otimes^{\cC'} Y')$, for $X, Y \in \cC$, $X', Y' \in \cC'$, and $\one^{\cC \boxtimes \cC'} := \one^{\cC} \boxtimes \one^{\cC'}$.  Moreover, the Deligne tensor product of two  tensor (resp., finite tensor, fusion) categories is a tensor (resp.,  finite tensor, fusion) category.

\smallskip

{\bf Hopf case.}  The category $H$-$\mathsf{fdmod}$ of finite-dimensional $\Bbbk$-modules over a (finite-dimensional) Hopf algebra $H$ is a (finite) tensor category. If, further, $H$ is a semisimple Hopf algebra, then $H$-$\mathsf{fdmod}$ is a fusion category.
If $H$ and $H'$ are Hopf algebras over $\Bbbk$, then $(H\text{-}\mathsf{mod})^{\otimes \textnormal{op}} \overset{\otimes}{\simeq} H^{\textnormal{cop}}\text{-}\mathsf{mod}$, for the coopposite Hopf algebra $H^{\textnormal{cop}}$. We also have that $H\text{-}\mathsf{mod} \boxtimes H'\text{-}\mathsf{mod} \overset{\otimes}{\simeq} (H \otimes_\Bbbk H')\text{-}\mathsf{mod}$, for the standard tensor product of Hopf algebras $H \otimes_\Bbbk H'$ over $\Bbbk$. 

\subsection{Braided categories, Drinfeld centers, and quantum $R$-matrices}
\label{sec:braided}
See \cite{EGNO}*{Sections~8.1, 8.3, 8.5, 8.6}, \cite{Majid}*{Sections 2.1, 7.1}, \cite{HeckenbergerSchneider}*{Section~4.1},  \cite{Kassel} for further details.  

\pagebreak

{\bf Braided categories.}  A  monoidal category  $(\cC, \otimes, \one, a, l, r)$ is {\it braided} if it is a equipped with  a natural isomorphism $c_{X,Y}\colon  X \otimes Y \overset{\sim}{\to} Y \otimes X$ for $X,Y \in \cC$\; ({\it braiding}),
such that the following hexagon axioms hold for each $X,Y,Z \in \cC$: 
\begin{eqnarray}
c_{X \otimes Y, Z} &=& a_{Z,X,Y} \circ (c_{X,Z} \otimes \ide_Y) \circ a_{X,Z,Y}^{-1} \circ (\ide_X \otimes c_{Y,Z})  \circ a_{X,Y,Z}, \label{eq:braid1} \\
c_{X, Y \otimes Z} &=&a_{Y,Z,X}^{-1} \circ  (\ide_Y \otimes c_{X,Z}) \circ a_{Y,X,Z} \circ (c_{X,Y} \otimes \ide_Z)  \circ a_{X,Y,Z}^{-1}. \label{eq:braid2}
\end{eqnarray}

We also have a {\it mirror braiding} on $\cC$ given by 
$c_{Y,X}^{-1}\colon X \otimes Y \overset{\sim}{\to} Y \otimes X$
for $X,Y \in \cC$. 
We refer to the braided monoidal category
$\overline{\cC}:=(\cC, \otimes, \one, a,l,r, c^{-1})$
as the {\it mirror} of $(\cC, \otimes, \one, a,l,r,c)$.

\smallskip

A {\it braided monoidal functor} between braided monoidal categories $\cC$ and $\cC'$ is a monoidal functor $(F, F_{-,-},F_0)\colon  \cC \to \cC'$ such that 
\begin{equation} \label{eq:braidfunc}
F_{Y,X} \circ c'_{F(X),F(Y)} = F(c_{X,Y}) \circ  F_{X,Y}
\end{equation}
for all $X,Y \in \cC$. An {\it equivalence (resp., isomorphism) of  braided monoidal categories} is a braided monoidal functor that yields an equivalence (resp., isomorphism) of the underlying categories. Similar notions exist for tensor categories and tensor functors.

\smallskip

{\bf Drinfeld centers.}  An important example of a braided monoidal category is the {\it Drinfeld center}  $\cZ(\cC)$ of a monoidal category $(\cC, \otimes, \one)$. Its objects are pairs $(V, c^V)$, where $V$ is an object of $\cC$ and $$c^V:=\{c^V_X \colon  X \otimes V \overset{\sim}{\to} V \otimes X\}_{X \in \cC}$$ is a natural isomorphism, called a  {\it half-braiding}, satisfying 
\begin{eqnarray} \label{eq:half-braid}
c^V_{X \otimes Y} &= \; a_{V,X,Y} \circ (c^V_X \otimes \ide_{Y}) \circ a^{-1}_{X,V,Y} \circ (\ide_{X} \otimes c^V_Y) \circ a_{X,Y,V}.
\end{eqnarray}
Morphisms $(V, c^V) \to (W, c^W)$ of $\cZ(\cC)$ are given by
  $f \in \Hom_{\cC}(V,W)$ such that, for all $X \in \cC$:
 \[
 (f \otimes \ide_X) \circ c^V_X = c^W_X \circ (\ide_X \otimes f).
 \]

The monoidal product of $\cZ(\cC)$ is $(V,c^V) \otimes (W,c^W):=(V\otimes W, c^{V \otimes W})$,  for all $X \in \cC$: 
\begin{eqnarray} \label{eq:center-tens}
c^{V\otimes W}_X \; := \; a_{V,W,X}^{-1} \circ (\ide_V \otimes c^W_X) \circ a_{V,X,W} \circ (c^V_X \otimes \ide_W) \circ a^{-1}_{X,V,W}.
\end{eqnarray}
An important feature of $\cZ(\cC)$ is the braiding defined by 

\vspace{-.2in}

\[
\xymatrix@C=4pc{
c_{V,W}\; :=c_{(V,c^V),(W,c^W)}:  (V \otimes W,c^{V \otimes W}) \ar[r]^(.65){c^W_V}
&(W \otimes V,c^{W \otimes V}).
}
\]
Moreover, if $\cC$ is a (finite) tensor category, then $\cZ(\cC)$ is a braided (finite) tensor category. 

\smallskip


{\bf Drinfeld centers in Hopf case and Yetter--Drinfeld modules.}   Take a finite-dimensional Hopf algebra $H:=(H,m,u,\Delta,\varepsilon,S)$ over $\Bbbk$, where $\Delta(h) =: h_{(1)} \otimes_\Bbbk h_{(2)}$ (sumless Sweedler notation). When $\cC = H$-$\mathsf{mod}$, we have the isomorphisms of braided monoidal categories below, 
\begin{equation} \label{eq:Drin}
   \cZ(H\text{-}\mathsf{mod}) \;\; \overset{\otimes}{\cong}  \;\; \lYD{H}  \;\; \overset{\otimes}{\cong}  \; \;\Drin(H)\text{-}\mathsf{mod},
\end{equation}
where $\lYD{H}$ is the category of  left Yetter--Drinfeld modules over $H$, and $\Drin(H)$ is the  Drinfeld double of $H$. We provide the details below.

The objects of the category of {\it left Yetter--Drinfeld modules} $\lYD{H}$ are triples $(V, \varodot, \partial^V)$, where $(V, \varodot)$ is a left $H$-module, and $(V, \partial^V)$ is a left $H$-comodule with $\partial^V(v):= v_{\langle -1 \rangle} \otimes_\Bbbk v_{\langle 0 \rangle} \in H \otimes_\Bbbk V$, subject to the following compatibility condition between $\varodot$ and $\partial^V$:
$$
\partial^V(h \varodot v) = h_{(1)} v_{\langle -1 \rangle} S(h_{(3)}) \otimes_\Bbbk (h_{(2)} \varodot v_{\langle 0 \rangle}).
$$
A morphism of $\lYD{H}$ is a linear map which is simultaneously a left $H$-module morphism and a left $H$-comodule morphism. Then the first isomorphism of \eqref{eq:Drin} holds via the assignments:
\begin{equation} \label{eq:ZC-YD}
{\small
\begin{array}{rl}
\cZ(\lmod{H}) \ni (V, \varodot, c^V) & \hspace{-.05in} \mapsto (V, \varodot, \partial^V) \in \lYD{H} \text{ for } \partial^V(v) := (c^V_H)^{-1}(v \otimes_\Bbbk 1_H) \\[.4pc]
\cZ(\lmod{H}) \ni (V, \varodot, c^V)  & \hspace{-.05in} \mapsfrom  (V, \varodot, \partial^V)  \in \lYD{H}\\[.2pc]
\text{ for } c^V_X(x \otimes_\Bbbk v):= v_{\langle 0 \rangle} \otimes_\Bbbk ((S^{-1}(v_{\langle -1 \rangle}) \cdot x). 
\end{array}
}
\end{equation}
Here,  $H$ in $c^V_H$ is the regular left $H$-module, and $(X,\cdot)$ is an arbitrary left $H$-module.

On the other hand, the {\it Drinfeld double} of $H$ is a Hopf algebra $\Drin(H)$, which is, as a start, equal to  $H^* \otimes_\Bbbk H$ as a vector space. Next, denote the standard  left and right actions of $H$ on $H^*$ by $\twoheadrightarrow$ and $\twoheadleftarrow$, respectively. That is, $h \twoheadrightarrow \xi := \langle \xi_{(2)}, h \rangle \xi_{(1)}$ with $\langle h \twoheadrightarrow \xi, h' \rangle = \langle \xi, h'h \rangle$,  and $\xi \twoheadleftarrow h := \langle \xi_{(1)}, h \rangle \xi_{(2)}$ with $\langle \xi \twoheadleftarrow h, h' \rangle = \langle \xi, h h' \rangle$, with $h, h' \in H$ and $\xi \in H^*$. Here, we use the Hopf pairing between $H^*$ and $H$. Then $\Drin(H)$ contains $H$ and $(H^*)^{\textnormal{op}}$ as Hopf subalgebras, and the product of  $\Drin(H)$ between elements of $H$ and $(H^*)^{\textnormal{op}}$ is given by: 
\begin{equation} \label{eq:Drin-prod}
h \xi \; =\; (h_{(3)} \twoheadrightarrow \xi \twoheadleftarrow S(h_{(1)})) h_{(2)} =  \langle \xi_{(1)}, S(h_{(1)}) \rangle \; \langle \xi_{(3)}, h_{(3)} \rangle\;  \xi_{(2)} h_{(2)},
\end{equation}
for $h \in H, \; \xi \in (H^*)^{\textnormal{op}}$.
Moreover, $\Drin(H)$ has the tensor product unit, coproduct, and counit. Then the second isomorphism of \eqref{eq:Drin} holds via  the assignments:
\begin{equation} \label{eq:YD-Drin}
{\small
\begin{array}{rl}
\lYD{H} \ni  (V, \varodot, \partial^V) & \hspace{-.05in} \mapsto  (V, \varodot, \diam) \in \lmod{\Drin(H)} \text{ for } \xi \diam v := \langle \xi, v_{\langle -1 \rangle} \rangle v_{\langle 0 \rangle}\\[.4pc]
\lYD{H} \ni (V, \varodot, \partial^V) & \hspace{-.05in} \mapsfrom  (V, \varodot, \diam)  \in \lmod{\Drin(H)} \\[.2pc]
\; \;  \text{ for } \partial^V(v):= \textstyle \sum_d h_d \otimes_\Bbbk (\xi_d \diam v),
\end{array}
}
\end{equation} 
with $\{h_d, \xi_d\}_d$ a dual basis of $H$. (The latter assignment is independent of choice of dual basis of~$H$.) \linebreak

\vspace{-.13in}

{\bf Quasitriangular Hopf algebras, quantum $R$-matrices.}
Again, take a Hopf algebra $H:=(H,m,u,\Delta,\varepsilon,S)$ over $\Bbbk$, where $\Delta(h) =: h_{(1)} \otimes_\Bbbk h_{(2)}$ (sumless Sweedler notation). Moreover, denote $\otimes:= \otimes_\Bbbk$. We say that $H$ is {\it quasitriangular} if there exists an invertible element 
\begin{equation}
\label{eq:R-sum-notation}
R = \textstyle \sum_i s_i \otimes t_i \in H \otimes H \;\; \text{({\it quantum $R$-matrix})}, 
\end{equation}
with inverse, $R^{-1}:= \textstyle \sum_i s^i \otimes t^i \; \in H \otimes H$, such that 
\begin{align}
\label{eq:QT1} \textstyle \sum_i (s_i)_{(1)} \otimes (s_i)_{(2)} \otimes t_i
 \; &=  \textstyle\; \sum_{j,k} s_j \otimes s_k \otimes t_j t_k,\\[.2pc]
\label{eq:QT2}  \textstyle\sum_i s_i \otimes (t_i)_{(1)} \otimes (t_i)_{(2)}
 \;  &= \textstyle\;  \sum_{j,k} s_j s_k \otimes t_k \otimes t_j,\\[.2pc]
\label{eq:QT3}  \textstyle \sum_i s_i h_{(1)} \otimes t_i h_{(2)}
  \; &= \textstyle\;  \sum_i  h_{(2)} s_i \otimes h_{(1)} t_i.
\end{align}
Alternatively, we will also use the following notation for quantum $R$-matrices. Take 
\[
R_{ab}:= \textstyle \sum_i s_i \; \text{(in the $a$-th slot)}\; \otimes \; t_i \; \text{(in the $b$-th slot)}
\otimes \; 1_H \; \text{(in other slots)},
\]
e.g., $R_{13} = \sum_i s_i \otimes 1_H \otimes t_i$. Then, the conditions \eqref{eq:QT1}--\eqref{eq:QT3} are written, respectively, as follows:
\[
(\Delta \otimes \ide_H) (R) = R_{13} R_{23},  
\quad \quad
(\ide_H \otimes \Delta) (R) = R_{13} R_{12},
\quad \quad
R \Delta(h) = \Delta^{\mathrm{op}}(h) R \quad \forall h \in H.  
\]

 \pagebreak
 
For a quasitriangular Hopf algebra $(H,R)$, we also have the identities below:
\begin{align}
\label{eq:QT4}
&(\varepsilon \otimes \ide)(R) = 1_H, \hspace{.28in}
(\ide \otimes \varepsilon)(R) = 1_H, \\[.2pc]
\label{eq:QT5}
&R^{-1} = (S \otimes \ide)(R), \quad
R = (\ide \otimes S)(R^{-1}), \quad
R = (S \otimes S)(R),
\end{align}
\smallskip
where $\Bbbk \otimes H$ and $H \otimes \Bbbk$ are identified with $H$.

\smallskip

For example, for a finite-dimensional Hopf algebra $H$ with dual bases $\{h_d, \xi_d\}_d$, we have that the Drinfeld double $\Drin(H)$ of $H$ is quasitriangular, 
with $R$-matrix:
\begin{equation*}
R_{\Drin(H)} := \textstyle \sum_d 1_{H^*} \otimes h_d \otimes \xi_d \otimes 1_H \; \in \Drin(H) \otimes \Drin(H). 
\end{equation*}

Moreover, quantum $R$-matrices of $H$ are tied to braidings of $\lmod{H}$ as we see  below.

\begin{lemma} \label{lem:R-c}
The tensor category $\lmod{H}$ is braided with 
\[
c_{X,Y} : X \otimes Y \to Y \otimes X, \quad
x \otimes y \mapsto \textstyle \sum_i (t_i \cdot y) \otimes (s_i \cdot x),
\]
for $R:= \sum_i s_i \otimes t_i \in H \otimes H$, if and only if  $R$ is a quantum $R$-matrix for $H$. \qed
\end{lemma}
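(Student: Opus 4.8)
The plan is to prove both implications at once by matching each defining property of a braiding to exactly one of the $R$-matrix axioms, exploiting two simplifications. First, $\lmod{H}$ is strict, so every associator appearing in \eqref{eq:braid1}--\eqref{eq:braid2} may be dropped. Second, the given formula is just the vector-space flip composed with the action of $R$: writing $\tau_{X,Y}\colon X \otimes Y \to Y \otimes X$ for the flip of underlying spaces and $\rho_R$ for the operator $x \otimes y \mapsto \sum_i (s_i \cdot x) \otimes (t_i \cdot y)$, we have $c_{X,Y} = \tau_{X,Y} \circ \rho_R$. Since $\tau$ is a natural isomorphism of underlying vector spaces that commutes with all module maps, naturality of $c$ in both variables is automatic from this presentation, for any element $R \in H \otimes H$; so naturality never enters the equivalence and need only be remarked upon.

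Next I would address $H$-linearity together with invertibility. Expanding $c_{X,Y}(h \cdot (x \otimes y))$ via the coproduct and comparing with $h \cdot c_{X,Y}(x \otimes y)$, one reads off in $H \otimes H$ that $c_{X,Y}$ is a morphism of $\lmod{H}$ for all $X,Y$ exactly when $\sum_i s_i h_{(1)} \otimes t_i h_{(2)} = \sum_i h_{(2)} s_i \otimes h_{(1)} t_i$ for all $h \in H$, which is \eqref{eq:QT3}. For invertibility, because $\tau$ is always invertible, $c_{X,Y}$ is an isomorphism for all $X,Y$ iff $\rho_R$ is invertible on every module, and this is equivalent to invertibility of $R$ in $H \otimes H$ as in \eqref{eq:R-sum-notation}.

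Finally, the two hexagons. Dropping associators, \eqref{eq:braid1} reads $c_{X \otimes Y, Z} = (c_{X,Z} \otimes \ide_Y) \circ (\ide_X \otimes c_{Y,Z})$; evaluating both sides on $(x \otimes y) \otimes z$ and reading off the coefficients in $H^{\otimes 3}$ yields $\sum_i (s_i)_{(1)} \otimes (s_i)_{(2)} \otimes t_i = \sum_{j,k} s_j \otimes s_k \otimes t_j t_k$, which is \eqref{eq:QT1}. Symmetrically, \eqref{eq:braid2} reduces to \eqref{eq:QT2}. In each case the forward implication (quasitriangular $\Rightarrow$ braiding) is a direct substitution, while the converse follows by specializing $X,Y,Z$ to regular representations and evaluating at $1_H \otimes 1_H \otimes 1_H$, so that the categorical identity, imposed for all modules, collapses to the corresponding identity in $H^{\otimes n}$.

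The main obstacle is almost entirely bookkeeping: after the flips are applied one must track precisely which leg of $R$ (or of $(\Delta \otimes \ide)(R)$) acts on which tensor factor, since $\tau$ reverses the output order, and a mismatched transposition of legs is the easiest way to fail to recognize \eqref{eq:QT1}--\eqref{eq:QT2}. The one genuinely structural point underlying all three converse implications is the reduction to regular representations: every element of a module $M$ is $f(1_H)$ for a unique $f \in \Hom_H(H, M)$, so naturality of $c$ together with the identities verified on free modules propagates them to arbitrary $X,Y,Z$, justifying that checking on the regular representation suffices.
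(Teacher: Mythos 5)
The paper itself gives no proof of this lemma: it is stated with a closing QED symbol as a classical fact (the standard correspondence between quasitriangular structures on $H$ and braidings on $\lmod{H}$, covered by the references listed at the start of Section~\ref{sec:braided}, e.g.\ the books of Kassel and Majid). So there is nothing in the paper to compare against line by line; judged on its own, your argument is a correct and complete rendition of the standard proof. The decomposition $c_{X,Y}=\tau_{X,Y}\circ\rho_R$ does give naturality for free for an arbitrary element $R$, $H$-linearity of all $c_{X,Y}$ is exactly \eqref{eq:QT3}, the two strictified hexagons are exactly \eqref{eq:QT1} and \eqref{eq:QT2} (your leg-bookkeeping is right: after the flips, the hexagon for $c_{X\otimes Y,Z}$ produces $\sum_{j,k} s_j\otimes s_k\otimes t_jt_k$ in the $(X,Y,Z)$-slots), and the converse direction is correctly obtained by specializing to regular representations and evaluating at $1_H\otimes\cdots\otimes 1_H$.

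Two points you should tighten, neither a gap in substance. First, the invertibility step ``$\rho_R$ invertible on every module iff $R$ invertible in $H\otimes H$'' needs one more line in the nontrivial direction: on the regular module, $\rho_R$ is left multiplication by $R$ in the algebra $H\otimes H$; it commutes with right multiplication, so its inverse is also a right $(H\otimes H)$-module map and hence is left multiplication by some $T$, which forces $RT=TR=1_H\otimes 1_H$. Second, your closing paragraph misattributes where the regular-representation argument is needed: in the forward implication, equal elements of $H^{\otimes n}$ act identically on every module, so no propagation via $\Hom_H(H,M)$ is required; and in the converse, one merely specializes the hypothesis (which is assumed for \emph{all} modules) to regular ones and evaluates at the identity. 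The $\Hom_H(H,M)$ remark would only be relevant if you wanted to deduce the categorical identities from their validity on free modules alone, which the lemma never asks for.
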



\section{Preliminaries and results on (braided) module categories} \label{sec:back-M}

Throughout this section, let $(\cC, \otimes, \one, a, l, r)$ be a monoidal category, unless stated otherwise. We review module categories over $\cC$ and module functors between them in Section~\ref{sec:modcat}. The collection of module functors form a category, which we discuss in Section~\ref{sec:catmodfunc}. Exact module categories are reviewed in Section~\ref{sec:exmodcat}. Then, bimodule categories and their centers are discussed in Section~\ref{sec:bimod-center}. Finally, braided module categories are introduced in Section~\ref{sec:bmc}, braided module functors are studied in Section~\ref{sec:bmcfunc}, and connections to quantum $K$-matrices are presented in Section~\ref{sec:Kmatrix}.

\subsection{Module categories} \label{sec:modcat}
See \cite{EGNO}*{Sections~7.1--7.3} for details. A {\it left $\cC$-module category}  is a category $\cM$ equipped with
an {\it action bifunctor} $\act: \cC \times \cM \to \cM$, 
a natural isomorphism $m_{X,Y,M}\colon  (X \otimes Y) \act M \overset{\sim}{\to} X \act (Y \act M)$ for $X,Y \in \cC$, $M \in \cM$, and a natural isomorphism $\lambda_M\colon  \one \act M \overset{\sim}{\to} M$ for $M \in \cM$,
such that the following pentagon and triangle axioms hold for each $X,Y,Z \in \cC$ and $M \in \cM$:
\begin{eqnarray}
m_{X,Y,Z \act M} \circ m_{X \otimes Y, Z,M} &=& (\ide_X \act m_{Y,Z,M})\circ m_{X, Y \otimes Z, M}\circ (a_{X,Y,Z} \act \ide_M), \label{eq:modassoc} \\
r_X \act \ide_M &=& (\ide_X \act \lambda_M) \circ m_{X, \one, M}.  \label{eq:modunit}
\end{eqnarray}
We sometimes write $\cM$ or $(\cM, \act)$ to denote $(\cM, \act, m, \lambda)$ for brevity.

\smallskip

A {\it $\cC$-module functor} between left $\cC$-module categories $(\cM, \act, m, \lambda)$ and $(\cM', \act', m', \lambda')$ is a functor $F: \cM \to \cM'$ equipped with a natural isomorphism,
$$s:=\{s_{X,M}: F(X \act M) \isomorph X \act' F(M)\}_{X \in \cC, M \in \cM},$$ 
such that the following coherence axioms hold for each $X,Y \in \cC$ and $M \in \cM$:
\begin{eqnarray}
m'_{X,Y,F(M)} \circ s_{X \otimes Y, M} &=& (\ide_X \act' s_{Y,M})\circ s_{X, Y \act M}\circ F(m_{X,Y,M}), \label{eq:modfuncassoc}\\
F(\lambda_M) &=& \lambda'_{F(M)} \circ s_{\one, M}. \label{eq:modfuncunit}
\end{eqnarray}

\pagebreak 

Similarly, a  \emph{right $\cC$-module category} is a category $\cM$ equipped with  a bifunctor $\triangleleft\colon  \cM \times \cC \to \cM$, a natural isomorphism $n_{M,X,Y}\colon  M \triangleleft (X \otimes Y) \overset{\sim}{\to} (M \triangleleft X) \triangleleft Y$ for $X,Y \in \cC$, $M \in \cM$, and a natural isomorphism $\rho_M\colon M\triangleleft \one  \overset{\sim}{\to} M$ for $M \in \cM$, satisfying pentagon and triangle axioms.

A {\it $\cC$-module functor} between right $\cC$-module categories $(\cM, \ract, n, \rho)$ and $(\cM', \ract', n', \rho')$ is a functor $F: \cM \to \cM'$ equipped with a natural isomorphism,
$$t:=\{t_{M,X}: F(M \ract X) \isomorph F(M) \ract' X\}_{X \in \cC, M \in \cM},$$ 
satisfying coherence axioms.

A {\it left module category over a tensor category $\cC$} is a left $\cC$-module category $(\cM, \act)$ that is abelian, $\Bbbk$-linear, locally finite, bilinear on morphisms, such that $- \act M: \cC \to \cM$ is exact for all $M \in \cM$. A similar notion holds for right module categories.
We also assume that module functors between such module categories are additive in each slot.


\subsection{Categories of module functors} \label{sec:catmodfunc}

The collection of $\cC$-module functors between left $\cC$-module categories $\cM$ and $\cM'$ forms a category, which we denote by $\mathsf{Fun}_{\cC}(\cM,\cM')$. A morphism in $\mathsf{Fun}_{\cC}(\cM,\cM')$ from $(F_1, s_1)$ to $(F_2,s_2)$ is a natural transformation from $F_1$ to $F_2$ that is compatible with $s_1$ and $s_2$. See \cite{EGNO}*{Section~7.2} for more information.

\smallskip

The category $\mathsf{Fun}_{\cC}(\cM,\cM')$ is not always well-behaved, but we have the following useful result.
 
\begin{proposition} \cite{ENO}*{Theorem~2.16} \label{prop:FunCss}
 If $\cC$ is a fusion category, and $\cM$ and $\cM'$ are finite and semisimple left $\cC$-module categories, then $\mathsf{Fun}_{\cC}(\cM, \cM')$ is a semisimple category. \qed
 \end{proposition}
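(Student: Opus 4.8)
The plan is to reduce the semisimplicity of $\mathsf{Fun}_{\cC}(\cM,\cM')$ to the semisimplicity of a category of modules over a \emph{separable algebra} internal to a fusion category, where it becomes transparent. First I would reduce to the indecomposable case. Since $\cM$ and $\cM'$ are finite and semisimple left $\cC$-module categories, each decomposes into a finite direct sum of indecomposable $\cC$-module subcategories, $\cM \simeq \bigoplus_{i} \cM_i$ and $\cM' \simeq \bigoplus_{j} \cM'_j$. As a module functor is additive and determined by its components between summands, there is an induced equivalence $\mathsf{Fun}_{\cC}(\cM,\cM') \simeq \bigoplus_{i,j} \mathsf{Fun}_{\cC}(\cM_i,\cM'_j)$, and a finite direct sum of semisimple $\Bbbk$-linear categories is semisimple. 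Hence I may assume $\cM$ and $\cM'$ are indecomposable.

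Next I would pass to internal algebras. By the classification of semisimple module categories over a fusion category (\cite{EGNO}, \cite{ENO}), there are separable indecomposable algebras $A, B \in \cC$ with $\cM \simeq \mathrm{Mod}_{\cC}(A)$ and $\cM' \simeq \mathrm{Mod}_{\cC}(B)$, the categories of right modules in $\cC$ with left $\cC$-action given by $\otimes$. I would then use the standard identification $\mathsf{Fun}_{\cC}(\mathrm{Mod}_{\cC}(A), \mathrm{Mod}_{\cC}(B)) \simeq {}_{B}\mathrm{Bimod}_{A}(\cC)$, sending a $B$-$A$-bimodule $X$ to the functor $- \otimes_{A} X$ (the relative tensor product, which exists since $\cC$ is semisimple, so the defining coequalizers split) and, conversely, a module functor to the image of the free module $A$ with its residual $B$- and $A$-actions. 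Finally, I would recognize ${}_{B}\mathrm{Bimod}_{A}(\cC)$ as the category $\mathrm{Mod}_{\cC \boxtimes \cC^{\otimes\mathrm{op}}}(B \boxtimes A^{\mathrm{op}})$ of modules over the algebra $B \boxtimes A^{\mathrm{op}}$ in the Deligne product $\cC \boxtimes \cC^{\otimes\mathrm{op}}$, via the action $(Y \boxtimes Z) \act M := Y \otimes M \otimes Z$.

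The remaining point, and the engine of the argument, is that modules over a separable algebra $S$ in a fusion category $\cD$ form a semisimple category. Here $\cD := \cC \boxtimes \cC^{\otimes\mathrm{op}}$ is fusion, since the Deligne product and the opposite of fusion categories are fusion (as recorded in Section~\ref{sec:monoidal}), and $S := B \boxtimes A^{\mathrm{op}}$ is separable because a Deligne product of separable algebras is separable. Separability means the multiplication $S \otimes S \to S$ splits as a map of $S$-bimodules; this splitting exhibits the forgetful functor of the free--forgetful adjunction $\mathrm{Mod}_{\cD}(S) \rightleftarrows \cD$ as separable. Consequently, any epimorphism of $S$-modules admitting a section in $\cD$ already admits a section of $S$-modules; since $\cD$ is semisimple every such epimorphism splits in $\cD$, so every short exact sequence of $S$-modules splits and $\mathrm{Mod}_{\cD}(S)$ is semisimple. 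Finiteness of the set of simple objects follows from finiteness of $\cD$ together with $\Hom$-finiteness.

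I expect the main obstacle to be the functor--bimodule equivalence together with the separability bookkeeping: one must verify that $- \otimes_{A} X$ is well defined and functorial as a $\cC$-module functor, that the assignment is an equivalence of $\Bbbk$-linear categories (with the internal-$\Hom$ adjunction supplying the inverse), and that the separability idempotents of $A$ and $B$ combine into one for $B \boxtimes A^{\mathrm{op}}$ compatibly with the bimodule structure, so that the splitting argument genuinely runs inside $\mathrm{Mod}_{\cD}(S)$. The decomposition and finiteness steps are routine by comparison.
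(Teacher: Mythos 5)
The paper does not actually prove this proposition; it is imported verbatim from \cite{ENO}*{Theorem~2.16} (hence the immediate \qed), so there is no internal argument to compare against. Your proof is correct and is essentially the standard proof of that cited theorem: reduction to indecomposable summands, Ostrik's classification $\cM \simeq \mathrm{Mod}_{\cC}(A)$ with $A$ a separable algebra in $\cC$, the identification $\mathsf{Fun}_{\cC}(\mathrm{Mod}_{\cC}(A), \mathrm{Mod}_{\cC}(B))$ with bimodules, i.e.\ with modules over $B \boxtimes A^{\mathrm{op}}$ in the fusion category $\cC \boxtimes \cC^{\otimes \textnormal{op}}$, and semisimplicity of modules over a separable algebra via the splitting of the separability idempotent --- with your separable-algebra formulation being the characteristic-free refinement of the semisimple-algebra argument given in \cite{ENO}.
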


A  subcollection of $\mathsf{Fun}_{\cC}(\cM,\cM')$ that is better behaved is the collection of right exact $\cC$-module functors between left $\cC$-module categories $\cM$ and $\cM'$; their  full subcategory of $\mathsf{Fun}_{\cC}(\cM,\cM')$  is denoted by $\mathsf{Rex}_{\cC}(\cM,\cM')$. 
We also denote the full subcategory of exact $\cC$-module functors from $\cM$ to $\cM'$ by $\mathsf{Ex}_{\cC}(\cM,\cM')$.


\subsection{Exact module categories} \label{sec:exmodcat}
Assume here that $\cC$ is a  finite tensor category. Here, we recall background material on exact module categories from \cite{EGNO}*{Sections~7.5, 7.11}.

A locally finite module category $(\cM, \act)$ over $\cC$ is called {\it exact} if for any projective object $P \in \cC$ and any object $M \in \cM$ we have that the object $P \act M$ is projective in $\cM$.

\begin{example} \label{ex:Cexact}
If $\cC$ is a finite tensor category, then $\cC$ is an exact left module category over both $\cC$ and  $\cC \boxtimes \cC^{\otimes \textnormal{op}}$; see \cite{EO}*{Example~3.3(i)}.
\end{example}

As mentioned above, the category of right exact module category functors is better behaved  than the category of ordinary module category functors. To see this, consider the result below.

\begin{proposition} \cite{EGNO}*{Section~7.11}  \label{prop:FunCabelfin}
 If $\cM$ and $\cM'$ are exact, finite, left $\cC$-module categories, then 
 $\mathsf{Rex}_{\cC}(\cM, \cM')$ is abelian and finite.
\qed
\end{proposition}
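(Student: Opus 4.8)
The plan is to first collapse the three functor categories $\mathsf{Rex}_\cC(\cM,\cM')$, $\mathsf{Ex}_\cC(\cM,\cM')$, and $\mathsf{Fun}_\cC(\cM,\cM')$ into a single one, and then to exhibit that common category as the representation category of a finite-dimensional algebra. The starting observation is that every $\cC$-module functor between exact module categories over a finite tensor category is automatically exact: for each rigid $X\in\cC$ the action $X\act(-)\colon\cM'\to\cM'$ has both adjoints ${}^*X\act(-)$ and $X^*\act(-)$ and is therefore exact, and using this together with a projective generator of $\cM$ one shows that the underlying functor of any module functor admits both adjoints, hence is exact. Granting this, $\mathsf{Rex}_\cC(\cM,\cM')=\mathsf{Ex}_\cC(\cM,\cM')=\mathsf{Fun}_\cC(\cM,\cM')$, so it suffices to prove the statement for the full module-functor category. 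This step is genuinely needed: without it, the pointwise kernel of a morphism of right exact functors need not remain right exact, so $\mathsf{Rex}_\cC$ would not be closed under kernels.

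Next I would construct the abelian structure pointwise. Given a module natural transformation $\eta\colon(F,s^F)\to(G,s^G)$, set $K(M):=\ker(\eta_M)$ and $C(M):=\coker(\eta_M)$, computed in the abelian category $\cM'$. Compatibility of $\eta$ with $s^F$ and $s^G$ means that for each $X\in\cC$ the constraint isomorphisms intertwine $\eta_{X\act M}$ with $\ide_X\act\eta_M$; since $X\act(-)$ is exact it commutes with kernels and cokernels, which endows $K$ and $C$ with natural module-functor structures and identifies them as the kernel and cokernel of $\eta$ inside $\mathsf{Fun}_\cC(\cM,\cM')$. As (co)kernels, images, and the mono/epi factorizations are all detected in $\cM'$, the category $\mathsf{Fun}_\cC(\cM,\cM')$ inherits all of the abelian and $\Bbbk$-linear axioms from $\cM'$; thus it is abelian.

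The substantive part is finiteness, and here the exactness of $\cM$ is essential. The idea is to produce a single projective generator. Fix projective generators $P\in\cM$ and $Q\in\cM'$; by exactness each $X\act P$ is projective in $\cM$. Using the internal Hom $\iHom(-,-)\colon\cM^{\mathrm{op}}\times\cM\to\cC$, which is exact in each variable precisely because $\cM$ is exact, the assignment $M\mapsto\iHom(P,M)\act Q$ is a composite of the module functor $\iHom(P,-)\colon\cM\to\cC$ with the module functor $(-)\act Q\colon\cC\to\cM'$, hence defines an object $\mathbb{P}\in\mathsf{Fun}_\cC(\cM,\cM')$ with a natural isomorphism $\Hom_{\mathsf{Fun}_\cC}(\mathbb{P},F)\cong\Hom_{\cM'}(Q,F(P))$. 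One then checks that $\mathbb{P}$ is a projective generator: it is projective because evaluation at $P$ is exact (kernels and cokernels being pointwise) and $Q$ is projective, and it is a generator because $F(P)=0$ forces $F(X\act P)\cong X\act F(P)=0$ and hence $F=0$, as the objects $X\act P$ generate $\cM$. Finally one verifies that $\End_{\mathsf{Fun}_\cC}(\mathbb{P})$ is finite-dimensional over $\Bbbk$, which reduces through the displayed adjunction to finite-dimensionality of morphism spaces in $\cM'$ together with the finiteness of $\cC$ and $\cM$. The resulting equivalence $\mathsf{Fun}_\cC(\cM,\cM')\simeq\lmod{\End_{\mathsf{Fun}_\cC}(\mathbb{P})^{\mathrm{op}}}$ then shows the category is finite.

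The main obstacle is this last step: constructing the corepresenting object $\mathbb{P}$ and checking that it is a projective generator with finite-dimensional endomorphism algebra. Both the existence of $\mathbb{P}$ as a module functor and its projectivity rest on exactness of the internal Hom and on the fact that projectives of $\cC$ act by projectives in $\cM$ and $\cM'$; without the exactness hypotheses neither the module-functor structure on $\mathbb{P}$ nor its projectivity survives, which is exactly why the statement is confined to exact module categories.
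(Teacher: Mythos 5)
Your proposal is correct in substance, but it takes a genuinely different route from the paper's source. The paper gives no argument of its own: it cites \cite{EGNO}*{Section~7.11}, where the result is obtained by representing the module categories by algebra objects in $\cC$ (Ostrik's theorem: $\cM\simeq\mathsf{Mod}_\cC(A)$, $\cM'\simeq\mathsf{Mod}_\cC(B)$) and identifying $\mathsf{Rex}_{\cC}(\cM,\cM')$ with the category of $(A,B)$-bimodules in $\cC$ via an Eilenberg--Watts-type equivalence $F\mapsto F(A)$, so that abelianness and finiteness are inherited from a bimodule category. You instead work intrinsically in the functor category: you collapse $\mathsf{Rex}_{\cC}=\mathsf{Ex}_{\cC}=\mathsf{Fun}_{\cC}$ (this is exactly Proposition~\ref{prop:FunCRexC} of the paper, which you could simply cite rather than re-sketch via adjoints), install the abelian structure pointwise, and exhibit a projective generator $\mathbb{P}=\iHom(P,-)\act Q$ with finite-dimensional endomorphism algebra, finishing with the recognition theorem for finite categories. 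The trade-off is real: the bimodule route proves that $\mathsf{Rex}_{\cC}(\cM,\cM')$ is finite abelian for \emph{arbitrary} finite module categories, with no exactness hypothesis (there kernels are simply not pointwise), and yields a concrete model; your route avoids Ostrik's theorem and stays inside the functor category, but genuinely needs exactness, as you correctly observe, and in effect reconstructs the same algebra, since $\End_{\mathsf{Fun}_{\cC}}(\mathbb{P})^{\mathrm{op}}$ plays the role of the bimodule algebra.

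Two of your steps are assertions rather than proofs, and they are where the actual work lies. First, the corepresentability $\Hom_{\mathsf{Fun}_{\cC}}(\mathbb{P},F)\cong\Hom_{\cM'}(Q,F(P))$ amounts to saying that evaluation at $P$ admits the left adjoint $N\mapsto\iHom(P,-)\act N$; the two assignments are built from the unit $\one\to\iHom(P,P)$ and the counit $\iHom(P,M)\act P\to M$ of the internal-Hom adjunction together with the module constraint of $F$, and verifying that they are mutually inverse and land in module-natural transformations is a nontrivial diagram chase that must be written out or cited. Second, the final Morita step needs to be run in the right order: the adjunction gives finite-dimensionality of $\Hom(\mathbb{P},F)$; projectivity plus your vanishing argument ($\Hom(\mathbb{P},F)=0$ forces $F(P)=0$, hence $F=0$ by right exactness and the fact that every object of $\cM$ is a quotient of $\one^{\oplus n}\act P$) then shows every object is a quotient of finitely many copies of $\mathbb{P}$; only after that does $\Hom(\mathbb{P},-)$ give an equivalence onto finite-dimensional $\End_{\mathsf{Fun}_{\cC}}(\mathbb{P})^{\mathrm{op}}$-modules, which also settles finite length, finitely many simples, and enough projectives. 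Both points can be completed, so there is no gap in substance, but as written they carry the burden of the proof.
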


We also have that any module category functor from an exact module category is (right) exact.

\begin{proposition} \cite{EO}*{Proposition~3.11}  \label{prop:FunCRexC} Let $\cM$ be an exact, finite left $\cC$-module category.
Then $\mathsf{Fun}_{\cC}(\cM, \cM') = \mathsf{Ex}_{\cC}(\cM, \cM')$ for any left $\cC$-module category $\cM'$. \qed
\end{proposition}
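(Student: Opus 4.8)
The plan is to show that an arbitrary $\cC$-module functor $(F,s)\colon \cM \to \cM'$ preserves short exact sequences; since $F$ is additive by hypothesis, this is exactly the claim that $\mathsf{Fun}_{\cC}(\cM,\cM') = \mathsf{Ex}_{\cC}(\cM,\cM')$. The guiding idea is that, although we do not yet know $F$ to be exact, we can \emph{force} any short exact sequence in $\cM$ to split before applying $F$, by acting with a suitable projective object of $\cC$. An additive functor automatically preserves split sequences, so exactness is then visible in $\cM'$ after acting by $P$, and the final task is to descend that exactness back to $F$ of the original sequence.

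First I would fix, once and for all, a projective object $P \in \cC$ together with a surjection $\epsilon\colon P \twoheadrightarrow \one$; such a $P$ exists because the finite tensor category $\cC$ has enough projectives. Since $\cC$ is rigid, every action functor $X \act (-)$ on a $\cC$-module category is exact (it has adjoints obtained by acting with the duals ${}^*X$ and $X^*$), a fact I will invoke for both $\cM$ and $\cM'$. Now take any short exact sequence $\mathcal{E}\colon 0 \to M_1 \to M_2 \to M_3 \to 0$ in $\cM$. Acting by $P$ preserves exactness, and because $\cM$ is an exact module category the last term $P \act M_3$ is projective; hence the short exact sequence $P \act \mathcal{E}$ splits. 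Applying the additive functor $F$ preserves this splitting, and the module-functor isomorphism $s_{P,-}\colon F(P \act -) \isomorph P \act' F(-)$ — natural in the second slot, hence an isomorphism of the whole sequence — identifies $F(P \act \mathcal{E})$ with $P \act' F(\mathcal{E})$. Thus $P \act' F(\mathcal{E})$ is a split, and in particular exact, sequence in $\cM'$.

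It remains to descend. I would observe that the endofunctor $P \act' (-)$ of $\cM'$ is exact (again by rigidity) and, crucially, \emph{faithful}. Faithfulness comes from the natural transformation $\epsilon \act' (-)\colon P \act' (-) \Rightarrow \one \act' (-) \cong \ide$, whose components $\epsilon \act' \ide_X$ are epimorphisms (the functor $(-) \act' X$ is right exact, so it sends the epimorphism $\epsilon$ to an epimorphism): if $\ide_P \act' \phi = 0$ for some $\phi\colon X \to Y$, then naturality gives $\phi \circ (\epsilon \act' \ide_X) = 0$, and since $\epsilon \act' \ide_X$ is epi we conclude $\phi = 0$. A faithful exact functor reflects exact sequences, so the exactness of $P \act' F(\mathcal{E})$ forces the exactness of $F(\mathcal{E})$. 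As $\mathcal{E}$ was arbitrary, $F$ is exact, which is the desired equality of functor categories.

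I expect the main obstacle to be precisely this final descent. The naive shortcuts fail: concluding from "$F$ preserves epimorphisms" alone does not yield full exactness, and one cannot realize $F(M_i)$ as a direct summand of $P \act' F(M_i)$, since $\one$ is only a quotient of $P$ and need not be a retract. The correct mechanism is the principle that an exact and faithful functor reflects exact sequences, with the faithfulness of $P \act' (-)$ extracted from the surjection $\epsilon$. The remaining verifications — that $s_{P,-}$ is genuinely an isomorphism of complexes, and that acting functors are exact by rigidity — are routine and can be left to the reader.
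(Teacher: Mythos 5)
Your proof is correct and is essentially the argument of Etingof--Ostrik's Proposition~3.11, which this paper cites without reproving: act by a projective $P \twoheadrightarrow \one$ so that the sequence splits (by exactness of $\cM$), transport the split sequence through $F$ via the module-functor constraint $s_{P,-}$, and descend using that $P \act' (-)$ is exact and faithful (faithfulness coming from the epimorphisms $P \act' X \twoheadrightarrow X$). Since the paper supplies only the citation and no independent proof, there is nothing further to compare.
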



\subsection{Bimodule categories and their centers} \label{sec:bimod-center}
Here, we recall material from work of Greenough, \cite{Greenough}*{Sections~2 and~7}.
A \emph{$\cC$-bimodule category} is a tuple $(\cM,\act, \triangleleft, m,n,\lambda,\rho)$ such that $(\cM,\act, m,\lambda)$ is a left  $\cC$-module category, and $(\cM, \triangleleft, n,\rho)$ is a right $\cC$-module category, with a natural isomorphism,
$b:=\{b_{X,M,Y}\colon (X\act M)\triangleleft Y \isomorph X\act (M\triangleleft Y)\}_{X,Y\in \cC, M\in \cM},$
satisfying  compatibility conditions. 

\begin{remark} \label{rem:bimod}
Note that $(\cM,\act, \triangleleft, m,n,\lambda,\rho)$ is a $\cC$-bimodule category if and only if $(\cM,\bar{\act}, \bar{m}, \bar{\lambda})$ a left module category over $\cC \boxtimes \cC^{\otimes \textnormal{op}}$. Here, $(X \boxtimes X') \bar{\act} M \leftrightsquigarrow (X \act M) \triangleleft X'$, for $X,X' \in \cC$ and $M \in \cM$, and we have similar correspondences between the associativity and unitality constraints.
\end{remark}

A {\it functor of $\cC$-bimodule categories} $F\colon \cM\to \cM'$ is at the same time a functor for the left and right $\cC$-module structures, with natural isomorphisms for $X\in \cC$ and $M\in \cM$,
$$s_{X,M}\colon F(X\act M)\isomorph X\act' F(M), \qquad t_{M,X}\colon F(M\triangleleft X)\isomorph F(M)\triangleleft' X,$$
satisfying the compatibility condition below for all $X,Y\in\cC$, $M\in \cM$:
\begin{gather*}
 (\ide_X\act' t_{M,Y}) \circ  s_{X,M\triangleleft Y} \circ F(b_{X,M,Y})  = b'_{X,F(M),Y}  \circ (s_{X,M}\triangleleft' \ide_Y) \circ t_{X\act M,Y}.
\end{gather*}

Given a $\cC$-module category, one defines its center by analogy with the center of a monoidal category (cf. \Cref{sec:braided}).

\begin{definition} \label{def:ZCM} Given a $\cC$-bimodule category $\cM$, we define its \emph{center} $\cZ_{\cC}(\cM)$ as the category consisting of objects $(M,d^M)$, where $M$ is an object of $\cM$ and $d$ is a natural isomorphism,
$$d^M=\left\{d^M_X\colon X\act M\isomorph M\triangleleft X\right\}_{X\in \cC} \; \; \text{({\it half-braiding})},$$
which satisfies the  coherence condition below for all $X,Y \in \cC$ and $M \in \cM$. 
\begin{align} \label{eq:dXYM}
d_{X\otimes Y}^M = n_{M,X,Y}^{-1} \circ (d_{X}^{M}\triangleleft\ide_Y) \circ b^{-1}_{X,M,Y} \circ (\ide_X\act d_{Y}^{M}) \circ m_{X,Y,M}.
\end{align}

Morphisms $f\colon (M,d^M)\to (N,d^N)$ are given by morphisms $f\colon M\to N$ in $\cM$ which commute with the respective half-braidings, that is, for $X \in \cC$:
$
(f\triangleleft \ide_X) \circ d^M_X = d^N_X \circ (\ide_X\act f).
$
\end{definition}

\begin{example} \label{ex:ZC=ZCC}
The Drinfeld center $\cZ(\cC)$ of  $\cC$ appears as the special case $\cZ_\cC(\cC)$, where $\cC$ is a $\cC$-bimodule via the regular action $X\act V=X\otimes V$ and $V\triangleleft X = V \otimes X$, along with $\cC$-bimodule constraints derived from the monoidal constraints of $\cC$ as follows: $m_{X,Y,V} = a_{X,Y,V}$, $n_{V,X,Y} = a^{-1}_{V,X,Y}$, $\lambda_X = l_X$, $\rho_X = r_X$, and $b_{X,V,Y} = a_{X,V,Y}$ for all $X,Y,V \in \cC$. In this case, $d_X^V = c_X^V$  for all $X,V \in \cC$.
\end{example}


\subsection{Braided module categories} \label{sec:bmc} Now assume that $\cC:=(\cC,c)$ is braided. We say that a left $\cC$-module category $(\cM, \act, m, \lambda)$ is {\it braided} if it is equipped with a natural isomorphism, 
$$e:=\{e_{X,M}\colon  X \act M \overset{\sim}{\to}  X \act M\}_{X \in \cC, M \in \cM} \; \; \text{({\it braiding})},$$ 
 such that the following axioms hold for each $X,Y \in \cC$ and $M \in \cM$:
\begin{align}
 \begin{split}
 e_{X \otimes Y, M}
 \; =\;  \;  &  m_{X,Y,M}^{-1} \circ (\ide_X \act e_{Y,M}) \circ\; m_{X,Y,M} \circ (c_{Y,X} \act \ide_M)
 \smallskip \\
 &\circ \; m_{Y,X,M}^{-1} \circ (\ide_Y \act e_{X,M}) \circ m_{Y,X,M}\circ (c_{Y,X}^{-1} \act \ide_M),     
 \end{split} \label{eq:brmod2}
 \\[.4pc]
\begin{split}
 e_{X,Y \act M}
 \; =\;  \;  & m_{X,Y,M} \circ (c_{Y,X} \act \ide_M) \circ m_{Y,X,M}^{-1} \circ (\ide_Y \act e_{X,M}) \smallskip \\
 &\circ\; m_{Y,X,M} \circ (c_{X,Y} \act \ide_M) \circ m_{X,Y,M}^{-1}.
 \end{split} \label{eq:brmod1}
 \end{align}

\begin{remark}
\label{rem:equiv-e}
Let us compare the definition above to the definition of a braided module 
category in other parts of the literature. 
\begin{enumerate}[(a)]
    \item In \cite{Kolb20}*{Section~2} the author uses right $\cC$-module categories and works in the strict case, and our axiom \eqref{eq:brmod1} is the braided module category axiom \cite{Kolb20}*{(2.3)}. Moreover, our axiom \eqref{eq:brmod2} is the equivalent to the axiom \cite{Kolb20}*{(2.4)} via a similar argument to \cite{Kolb20}*{Remarks~2.2 and 2.4} when $\cC$ is {\it ribbon}. 
    More precisely, if $\cC$ is a ribbon category with a twist transformation $\{\theta_X : X \overset{\sim}{\to} X\}_{X \in \cC}$, see \cite{EGNO}*{Definition 8.10.1}, then the natural isomorphism $e$ satisfies conditions \eqref{eq:brmod2}--\eqref{eq:brmod1} if and only if the natural isomorphism
    \[
    \widetilde{e}:=\{\widetilde{e}_{X,M}= e_{X,M} (\theta_X \act \ide_M): X \act M \overset{\sim}{\to} X \act M\}_{X \in \cC, M \in \cM}
    \]
    satisfies the left-hand versions of \cite{Kolb20}*{(2.3)--(2.4)}: \eqref{eq:brmod1} with $e$ replaced by $\widetilde{e}$, with 
{\small
\begin{align*}
 \hspace{.45in} \widetilde{e}_{X \otimes Y, M}
  =  \;  m_{X,Y,M} \; (c_{Y,X} \act \ide_M)
 \;  m_{Y,X,M}^{-1} \; (\ide_Y \act \widetilde{e}_{X,M}) \; m_{Y,X,M}\; (c_{X,Y} \act \ide_M) \; m_{X,Y,M}^{-1} \; (\ide_X \act \widetilde{e}_{Y,M}).     
 \end{align*}
 }
    We further note that the conventions of \cite{Kolb20} are identical to those in \cite{Brochier13}*{Section~5.1}.
    
    \smallskip
    
    \item Considering \cite{DavydovNikshych}*{Definition~4.1}, our axiom \eqref{eq:brmod1} is the same as their first braided module category axiom. Also, our axiom \eqref{eq:brmod2} is the equivalent to their second braided module category axiom, by using the first braided module axiom.
\end{enumerate}
\end{remark}

The next result shows that braided module categories can be obtained using braided monoidal functors, cf. \cite{DavydovNikshych}*{Proposition 4.12}.

\begin{proposition} \label{prop:brmodsource}
We have the following statements.
\begin{enumerate}[\upshape (a)]
\item Suppose that $\cC$ and $\cC'$ are monoidal categories and $(F, F_{-,-},F_0): \cC \to \cC'$ is a (strong) monoidal functor. If $(\cM',\act',m',\lambda')$ is a left $\cC'$-module category,  then $(\cM', \act, m, \lambda)$ is a left $\cC$-module category with $$X \act M := F(X) \act' M,$$ 
 $m_{X,Y,M} :=  m'_{F(X), F(Y), M} (F^{-1}_{X,Y} \act' \ide_M)$, and 
$\lambda_M :=  \lambda'_M  (F_0^{-1} \act' \ide_M)$, for $X,Y \in \cC$,  $M \in \cM'$.

\smallskip

\item If $(\cC,c)$ and $(\cC', c')$ are  braided monoidal categories and  $(F, F_{-,-},F_0): \cC \to \cC'$ is a braided monoidal functor, then the left $\cC$-module category $\cC'$ from part (a)  is braided with
$$e_{X,M} := c'_{M,F(X)} \circ c'_{F(X),M},$$
for all $X \in \cC$ and $M \in \cC'$.
\end{enumerate}
\end{proposition}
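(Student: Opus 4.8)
The plan is to handle the two parts separately: part~(a) is a routine coherence verification (it is the restriction of a module-category structure along a monoidal functor), whereas part~(b) carries the content. For part~(a) I would first note that $m$ and $\lambda$ are natural isomorphisms, being composites of the natural isomorphisms $m'$, $\lambda'$ with the invertible functor constraints $F_{-,-}$, $F_0$. It then remains to check the module pentagon~\eqref{eq:modassoc} and triangle~\eqref{eq:modunit}. Substituting $X\act M=F(X)\act' M$, $m_{X,Y,M}=m'_{F(X),F(Y),M}(F^{-1}_{X,Y}\act'\ide_M)$ and $\lambda_M=\lambda'_M(F_0^{-1}\act'\ide_M)$ into both sides, the pentagon reduces to the pentagon for the $\cC'$-module category $\cM'$ combined with the associativity constraint of $F$ (the identity relating $F_{X,Y\otimes Z}$, $F_{X\otimes Y,Z}$, $F_{Y,Z}$, $F_{X,Y}$ and $a'$), and the triangle reduces to the triangle for $\cM'$ combined with the unit constraint of $F$. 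I would present this as a diagram chase and not grind through the cancellations.

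For part~(b), by part~(a) the underlying $\cC$-module structure is already valid, so I only need the braiding axioms \eqref{eq:brmod1}--\eqref{eq:brmod2} for $e_{X,M}=c'_{M,F(X)}\circ c'_{F(X),M}$, where now $\cM'=\cC'$ carries the regular action $\act'=\otimes'$, so that $m_{X,Y,M}=a'_{F(X),F(Y),M}(F^{-1}_{X,Y}\otimes'\ide_M)$. I would verify \eqref{eq:brmod1} first. The key observation is that the functor constraints cancel on the right-hand side: writing $c_{Y,X}\act\ide_M=F(c_{Y,X})\otimes'\ide_M$ and using the braided functor axiom \eqref{eq:braidfunc} in the form $F(c_{Y,X})=F_{X,Y}\circ c'_{F(Y),F(X)}\circ F^{-1}_{Y,X}$, the factor $F_{X,Y}$ cancels the $F^{-1}_{X,Y}$ inside $m_{X,Y,M}$ while the factor $F^{-1}_{Y,X}$ cancels the $F_{Y,X}$ inside $m^{-1}_{Y,X,M}$, leaving
\[
m_{X,Y,M}\,(c_{Y,X}\act\ide_M)\,m^{-1}_{Y,X,M}=a'_{F(X),F(Y),M}\,(c'_{F(Y),F(X)}\otimes'\ide_M)\,(a')^{-1}_{F(Y),F(X),M},
\]
and symmetrically for the other flanking triple $m_{Y,X,M}(c_{X,Y}\act\ide_M)m^{-1}_{X,Y,M}$. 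Thus the whole right-hand side of \eqref{eq:brmod1} becomes a pure expression in $c'$, $a'$, and the middle factor $\ide_Y\act e_{X,M}=\ide_{F(Y)}\otimes' e_{X,M}$.

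To match this, I would expand the left-hand side $e_{X,Y\act M}=c'_{F(Y)\otimes' M,\,F(X)}\circ c'_{F(X),\,F(Y)\otimes' M}$ by applying the hexagon axioms \eqref{eq:braid1}--\eqref{eq:braid2} for $c'$ to each of the two braidings. In the resulting composite the innermost associators $a'_{F(Y),M,F(X)}$ and $(a')^{-1}_{F(Y),M,F(X)}$ cancel, and the adjacent factors $\ide_{F(Y)}\otimes'c'_{M,F(X)}$ and $\ide_{F(Y)}\otimes'c'_{F(X),M}$ merge into $\ide_{F(Y)}\otimes' e_{X,M}$; what flanks this middle factor is then exactly the $c',a'$ expression obtained above, establishing \eqref{eq:brmod1}. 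For \eqref{eq:brmod2} I would argue analogously, additionally using $F_{X,Y}$ and naturality of $c'$ to pass between $F(X\otimes Y)$ and $F(X)\otimes' F(Y)$ and tracking the extra $c^{-1}_{Y,X}$ factor; alternatively, as recorded in \Cref{rem:equiv-e}, \eqref{eq:brmod2} may be deduced from \eqref{eq:brmod1} together with the module-category axioms.

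The step I expect to be the main obstacle is purely bookkeeping: threading the associativity constraints $a'$ and the functor constraints $F_{-,-},F_0$ through the long composites so that the cancellations above occur precisely as stated. Conceptually the identities express that the double braiding (monodromy) of $\cC'$ makes the regular module category braided, transported along $F$; after strictifying $\cC$ and $\cC'$ by Mac~Lane coherence (retaining $F_{-,-}$ and the braidings), the computation collapses to the hexagon axioms for $c'$ together with naturality. I would therefore use strictification to keep the exposition readable while flagging where the constraint isomorphisms reappear in the non-strict setting, consistently with \cite{DavydovNikshych}*{Proposition~4.12} cited in the statement.
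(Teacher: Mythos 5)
Your proof of part~(b) is correct and matches the paper's argument essentially step for step: expand the module constraints, use the braided-functor axiom \eqref{eq:braidfunc} to cancel the $F_{-,-}$ factors against their inverses, then apply the hexagons \eqref{eq:braid1}--\eqref{eq:braid2} to recognize the double braiding $c'_{F(Y)\otimes' M,F(X)}\circ c'_{F(X),F(Y)\otimes' M}=e_{X,Y\act M}$, with part~(a) handled as a routine verification (the paper simply cites \cite{Walton}*{Example~3.18}) and \eqref{eq:brmod2} treated ``likewise.'' One caveat: your proposed shortcut for \eqref{eq:brmod2} misreads Remark~\ref{rem:equiv-e}, which only asserts that, \emph{given} \eqref{eq:brmod1}, axiom \eqref{eq:brmod2} is equivalent to the second axiom of \cite{DavydovNikshych}*{Definition~4.1}---it does not say \eqref{eq:brmod2} is a consequence of \eqref{eq:brmod1}---so you should rely on your primary route of carrying out the analogous direct computation, which is exactly what the paper does.
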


\begin{proof} (a)  See, e.g., \cite[Example~3.18]{Walton}.

\smallskip

(b)  It suffices to establish \eqref{eq:brmod2} and \eqref{eq:brmod1} when $e_{X,M} := c'_{M,F(X)} \circ c'_{F(X),M}$, for $X \in \cC$ and $M \in  \cC'$. The following computation verifies \eqref{eq:brmod1}:
{\small
\begin{align*}
&m_{X,Y,M} \; (c_{Y,X} \act \ide_M) \; m_{Y,X,M}^{-1} \; (\ide_Y \act e_{X,M}) \; m_{Y,X,M} \; (c_{X,Y} \act \ide_M) \; m_{X,Y,M}^{-1}
\\[6pt]
&=  a'_{F(X), F(Y), M} \; (F^{-1}_{X,Y} \otimes' \ide_M) \; (F(c_{Y,X}) \otimes' \ide_M) \; (F_{Y,X} \otimes' \ide_M) \; a'^{-1}_{F(Y), F(X), M} \\
&\quad \circ (\ide_{F(Y)} \otimes'  c'_{M,F(X)}) \;  (\ide_{F(Y)} \otimes'  c'_{F(X),M})\\
& \quad \circ a'_{F(Y), F(X), M} \; (F^{-1}_{Y,X} \otimes' \ide_M) \; (F(c_{X,Y}) \otimes' \ide_M) \;  (F_{X,Y} \otimes' \ide_M) \; a'^{-1}_{F(X), F(Y), M} \\[6pt]
&=  a'_{F(X), F(Y), M} \; (c'_{F(Y),F(X)} \otimes' \ide_M)  \; a'^{-1}_{F(Y), F(X), M}  \; (\ide_{F(Y)} \otimes'  c'_{M,F(X)}) \;  (\ide_{F(Y)} \otimes'  c'_{F(X),M})\\
& \quad \circ a'_{F(Y), F(X), M}  \; (c'_{F(X),F(Y)} \otimes' \ide_M)  \; a'^{-1}_{F(X), F(Y), M} \\[6pt]
&=  a'_{F(X), F(Y), M} \; (c'_{F(Y),F(X)} \otimes' \ide_M) \; a'^{-1}_{F(Y), F(X), M} \; (\ide_{F(Y)} \otimes'  c'_{M,F(X)}) \; a'_{F(Y),M,F(X)}\\
&\quad \circ a'^{-1}_{F(Y),M,F(X)} \; (\ide_{F(Y)} \otimes'  c'_{F(X),M}) \; a'_{F(Y), F(X), M}  \; (c'_{F(X),F(Y)} \otimes' \ide_M)  \; a'^{-1}_{F(X), F(Y), M} \\[4pt]
& = c'_{F(Y)\otimes' M, F(X)} \; c'_{F(X), F(Y) \otimes' M} \\[4pt]
& = e_{X, Y \act M}.
\end{align*}
}

\noindent The first and last equations holds by definition; the second equation holds by the braided monoidal functor axiom \eqref{eq:braidfunc}; the third equation holds trivially; and the fourth equation holds by the braided monoidal category axioms \eqref{eq:braid1} and \eqref{eq:braid2}.

Likewise,  \eqref{eq:brmod2} holds by applying a combination of the braided monoidal functor axiom \eqref{eq:braidfunc} and the braided monoidal category axioms \eqref{eq:braid1} and \eqref{eq:braid2}.
\end{proof}


\subsection{Braided module functors} \label{sec:bmcfunc}

Here, we compare  braided module categories via the notions below. For ease, given a left $\cC$-module category $\cM$ with objects $X \in \cC$, $M \in \cM$, and morphisms $\psi \in \cC$, $\phi \in \cM$, we write $X \act \phi$ and $\psi \act \ide_M$ for the morphisms $\ide_X \act \phi$ and $\psi \act M$ in $\cM$, respectively.

\begin{definition} \label{def:brmodcat-equiv}
A {\it braided $\cC$-module functor} between braided left $\cC$-module categories $(\cM,\act, e)$ and $(\cM',\act', e')$ is a left $\cC$-module functor $(F, s)\colon  (\cM, \act) \to (\cM', \act')$ such that 
\begin{equation} \label{eq:braidmodfunc}
e'_{X,F(M)} \circ s_{X,M} = s_{X,M} \circ  F(e_{X,M})
\end{equation}
for all $X \in \cC$ and $M \in \cM$. An {\it equivalence (resp., isomorphism) of  braided left $\cC$-module categories} is given by two braided module functors $F\colon \cM\to \cN$ and $G\colon \cN\to \cM$ between the two module categories that yields an equivalence (resp., isomorphism) of the underlying categories.
\end{definition}

The next result is straight-forward; the reader may refer to  the ArXiv version 1 of this article if they are interested in the proof.

\begin{proposition}\label{prop:inverseCbraided}
    Let $(F,s): (\cM,\act,e) \to (\cM',\act',e')$ be a functor of braided left $\cC$-module categories and let $G\colon \cM'\to \cM$ be a quasi-inverse  of $F$. Then there exists a natural isomorphism $s'$ making $(G,s')$ a functor of braided left $\cC$-module categories. \qed
\end{proposition}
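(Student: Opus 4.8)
The plan is to transport the braided module functor structure along the quasi-inverse by the standard trick: given $(F,s)\colon\cM\to\cM'$ with quasi-inverse $G$, fix natural isomorphisms $\eta\colon\ide_{\cM}\isomorph GF$ and $\varepsilon\colon FG\isomorph\ide_{\cM'}$ witnessing the equivalence, and then \emph{define} the module constraint $s'$ for $G$ so that $(G,s')$ becomes a $\cC$-module functor, as in the general theory of equivalences of module categories. Concretely, for $X\in\cC$ and $N\in\cM'$ I would set
\begin{equation*}
s'_{X,N}\colon G(X\act' N)\xrightarrow{\;\sim\;} X\act G(N)
\end{equation*}
to be the composite built from $\eta$, $\varepsilon$, and $s^{-1}$: apply $\eta$ to move $X\act G(N)$ into the image of $GF$, use $s_{X,G(N)}^{-1}$ to rewrite $F\bigl(X\act G(N)\bigr)$ as $X\act' FG(N)$, apply $\varepsilon_N$, and compare with $G(X\act' N)$ via naturality of $\eta$. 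The first thing I would do is check that this $s'$ satisfies the module-functor coherence axioms \eqref{eq:modfuncassoc} and \eqref{eq:modfuncunit}; this is the routine part and is exactly the content of the statement that a quasi-inverse of a module functor is canonically a module functor, so I would cite or reproduce that standard argument.

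The genuinely new point is the braiding compatibility \eqref{eq:braidmodfunc} for $(G,s')$, namely
\begin{equation*}
e_{X,G(N)}\circ s'_{X,N}=s'_{X,N}\circ G(e'_{X,N})
\end{equation*}
for all $X\in\cC$, $N\in\cM'$. The strategy here is to \emph{pull this back} to the corresponding identity for $(F,s)$, which we are given holds by hypothesis. First I would establish that the braidings $e$ and $e'$ are themselves natural isomorphisms of the underlying module structures, so that they commute with $\eta$ and $\varepsilon$ up to the appropriate instances of $s$ and $s'$; the key observation is that \eqref{eq:braidmodfunc} for $F$ says precisely that $F$ intertwines $e$ and $e'$ through $s$. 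Then I would substitute the definition of $s'$ into both sides of the desired equation and use: (i) the given compatibility \eqref{eq:braidmodfunc} for $(F,s)$, applied at the object $G(N)$; (ii) naturality of $e'$ with respect to the morphism $\varepsilon_N\colon FG(N)\to N$; and (iii) the triangle identities relating $\eta$ and $\varepsilon$. After cancelling the isomorphisms $\eta$, $\varepsilon$, and $s$ that appear on both sides, the equation should reduce to an instance of the hypothesis.

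The main obstacle I anticipate is purely bookkeeping: because everything is phrased non-strictly, the composites defining $s'$ carry associativity constraints $m$, $m'$ and unit constraints $\lambda$, $\lambda'$, and making the braiding computation close requires carefully tracking where these insert and invoking naturality of $e$, $e'$ at each $m$, $m'$. I expect no conceptual difficulty—the whole content is that \eqref{eq:braidmodfunc} is an equation between natural isomorphisms and hence transports across the equivalence—so the cleanest writeup either works strictly (appealing to the coherence theorem to suppress $a$, $m$, $\lambda$) or simply records the defining composite for $s'$ and verifies \eqref{eq:braidmodfunc} by the chain of substitutions above. Given that the authors themselves flag this as ``straight-forward'' and defer the details to an earlier ArXiv version, I would present the explicit formula for $s'$ and the three-step reduction of \eqref{eq:braidmodfunc} to the hypothesis, leaving the diagram chase to the reader.
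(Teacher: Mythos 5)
Your proposal is correct and takes essentially the same route as the paper's proof (which is deferred to the ArXiv version 1): transport $s$ along the equivalence data $\eta,\varepsilon$ to define $s'$, verify the module-functor axioms \eqref{eq:modfuncassoc}--\eqref{eq:modfuncunit} by the standard quasi-inverse argument, and reduce \eqref{eq:braidmodfunc} for $(G,s')$ to the hypothesis for $(F,s)$. One minor simplification: the braiding step closes using only naturality of $\eta$, the hypothesis \eqref{eq:braidmodfunc} applied at $G(N)$ together with functoriality of $G$, and naturality of $e'$ in its second argument with respect to $\varepsilon_N$ --- the triangle identities you list as ingredient (iii) are not actually needed there.
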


The consequence below is now straight-forward to establish.

\begin{corollary}
Braided (left) $\cC$-module equivalence is an equivalence relation for braided (left) $\cC$-module categories. \qed
\end{corollary}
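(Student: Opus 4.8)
The plan is to verify the three defining properties of an equivalence relation—reflexivity, symmetry, and transitivity—directly from Definition~\ref{def:brmodcat-equiv}, using Proposition~\ref{prop:inverseCbraided} together with the fact that braided $\cC$-module functors are closed under composition.

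For reflexivity, I would observe that for any braided left $\cC$-module category $(\cM, \act, e)$, the identity functor $\ide_\cM$ equipped with the identity structure isomorphisms $s_{X,M} := \ide_{X \act M}$ is a braided $\cC$-module functor: axiom \eqref{eq:braidmodfunc} reduces to $e_{X,M} = e_{X,M}$. Taking both functors in Definition~\ref{def:brmodcat-equiv} to be $(\ide_\cM, \ide)$ then exhibits $\cM$ as braided $\cC$-module equivalent to itself. For symmetry, suppose $F\colon \cM \to \cN$ and $G\colon \cN \to \cM$ are braided $\cC$-module functors witnessing an equivalence; swapping their roles immediately shows $\cN$ is braided $\cC$-module equivalent to $\cM$. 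Alternatively, starting from a single braided module functor $F$ that is an equivalence of underlying categories, Proposition~\ref{prop:inverseCbraided} supplies a quasi-inverse $G$ carrying the structure of a braided $\cC$-module functor, so that symmetry is in effect built into the definition.

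For transitivity, the key step is that the composite of two braided $\cC$-module functors is again a braided $\cC$-module functor. Given $(F_1, s_1)\colon \cM \to \cN$ and $(F_2, s_2)\colon \cN \to \cP$, the composite carries the standard module-functor structure $s_{X,M} := (s_2)_{X, F_1(M)} \circ F_2((s_1)_{X,M})$; I would verify \eqref{eq:braidmodfunc} for the composite by substituting this formula, applying \eqref{eq:braidmodfunc} for $F_2$ at the object $F_1(M)$, and then applying \eqref{eq:braidmodfunc} for $F_1$, with the intermediate braiding $e'$ of $\cN$ threading through the functor $F_2$ and cancelling as required. Since composites of equivalences of underlying categories are again equivalences, composing the two witnessing pairs yields a pair of braided $\cC$-module functors witnessing an equivalence $\cM \simeq \cP$.

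I expect no serious obstacle here, as the entire argument is a routine diagram chase. The one point requiring care is the transitivity computation verifying that \eqref{eq:braidmodfunc} is preserved under composition; this is a short calculation chaining the braided-functor axioms for $F_1$ and $F_2$. The same bookkeeping handles the parenthetical isomorphism case verbatim, with equivalences of underlying categories replaced by strict inverses and quasi-inverses replaced by genuine inverses.
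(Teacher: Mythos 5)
Your proof is correct and follows exactly the route the paper intends: the paper states this corollary as a straightforward consequence of Proposition~\ref{prop:inverseCbraided}, with reflexivity via the identity functor, symmetry via the symmetric form of Definition~\ref{def:brmodcat-equiv} (or via Proposition~\ref{prop:inverseCbraided}), and transitivity via closure of braided $\cC$-module functors under composition, which is precisely your argument. Your transitivity computation chaining \eqref{eq:braidmodfunc} through $F_2$ at $F_1(M)$ and then through $F_1$ is the right verification.
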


 We now discuss how to transfer structure for (braided) module categories, particularly across an equivalence of categories. The proof is available in the ArXiv version 1 of this article.

\begin{proposition}\label{prop:structuretransfer}
Let $F\colon \cM\to \cM'$ be a category equivalence with a quasi-inverse   $G\colon \cM'\to\cM$.
\begin{enumerate}[\upshape (a)]
\item  If $(\cM, \act)$ is a left $\cC$-module category, then we can define a left $\cC$-module category structure $\act'$ on $\cM'$ via 
$$X\triangleright' N \; := \; F(X\triangleright G(N)),$$
for each $X \in \cC$ and $N \in \cM'$, such that both $F$ and $G$ are left $\cC$-module functors.
    
\medskip
    
\item A braiding $e$ on the left $\cC$-module category $\cM$ induces a braiding $e'$ on $\cM'$ via
$$e'_{X,N} \; := \; F(e_{X,G(N)})\colon \; X\act'N\isomorph X\act'N,$$
for each $X \in \cC$ and $N \in \cM'$,
with the left $\cC$-module category structure from part (a) such that $F,G$ preserve the braiding.  \qed
\end{enumerate}
\end{proposition}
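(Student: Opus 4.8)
The plan is to transport all of the coherence data of $\cM$ across the equivalence, so that $F$ becomes a (braided) module equivalence by construction, and then to check that the transported data satisfies the module and braided module axioms by pulling the corresponding axioms of $\cM$ through the functor $F$. First I would fix natural isomorphisms $\eta\colon \ide_{\cM}\Rightarrow GF$ and $\epsilon\colon FG\Rightarrow\ide_{\cM'}$ exhibiting the equivalence; after adjusting the data if necessary, I may assume $(F,G,\eta,\epsilon)$ is an adjoint equivalence, so that the triangle identities hold. This lets me freely insert and cancel $\eta$ and $\epsilon$ in diagram chases.

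For part (a), with $X\act' N := F(X\act G(N))$, I would define the associativity constraint $m'_{X,Y,N}$ as the composite obtained by applying $F$ to $m_{X,Y,G(N)}$ and then correcting by $F(\ide_X\act \eta_{Y\act G(N)})$, which identifies $F(X\act(Y\act G(N)))$ with $F(X\act GF(Y\act G(N)))=X\act'(Y\act' N)$; similarly I would set $\lambda'_N := \epsilon_N\circ F(\lambda_{G(N)})$. The module functor structure on $F$ is $s_{X,M}:=F(\ide_X\act\eta_M)\colon F(X\act M)\isomorph X\act' F(M)$, and that on $G$ is essentially $\eta^{-1}_{X\act G(N)}$. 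The pentagon axiom \eqref{eq:modassoc} and triangle axiom \eqref{eq:modunit} for $(\cM',\act',m',\lambda')$, as well as the module functor coherences \eqref{eq:modfuncassoc}--\eqref{eq:modfuncunit} for $F$ and $G$, then reduce to the corresponding identities in $\cM$ after using naturality of $m$, $\lambda$, $\eta$, $\epsilon$ and functoriality of $F$.

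For part (b), with $e'_{X,N}:=F(e_{X,G(N)})$, the braided module functor condition \eqref{eq:braidmodfunc} for $F$ is immediate from the definitions of $e'$ and $s$ together with naturality of $\eta$; the analogous statement holds for $G$. To check that $e'$ is a genuine braiding, I would verify axioms \eqref{eq:brmod2} and \eqref{eq:brmod1}. Rather than expand both sides, I would observe that each axiom is an equality of natural isomorphisms built from $\act'$, $m'$, the braiding $c$ of $\cC$, and $e'$; since every object $N$ of $\cM'$ is naturally isomorphic, via $\epsilon_N$, to $FG(N)$, and $F$ equipped with $s$ intertwines $e$ with $e'$ and $m$ with $m'$, the axiom for $e'$ at $N$ is the image under $F$ of the corresponding axiom for $e$ at $G(N)$. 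As $\cM$ is a braided module category the latter holds, and faithfulness of $F$ upgrades this to the desired equality.

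The main obstacle is the bookkeeping in part (a): verifying the pentagon for $m'$ forces one to insert the unit $\eta$ at several objects and to cancel the resulting $GF$-terms using naturality and the triangle identities, and the same style of chase recurs for the module functor coherence of $F$. The conceptual content is minimal---everything is dictated by naturality---so the real work is organizing these cancellations cleanly; once the coherences of part (a) are in place, part (b) follows almost formally, because $F$ has been made into a module equivalence that intertwines the two braidings.
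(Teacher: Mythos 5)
Your proof is correct and takes the same transport-of-structure route as the paper's own argument (which the paper defers to its arXiv version 1): define $m'$, $\lambda'$, and the module functor structures on $F$ and $G$ via the unit and counit of the equivalence, then reduce every axiom for $(\cM',\act',m',\lambda',e')$ to the corresponding axiom in $\cM$ by naturality chases. Your instinct to upgrade to an adjoint equivalence is exactly the right technical move, since the triangle identities are genuinely needed for the unit axiom \eqref{eq:modunit} and the unit coherence \eqref{eq:modfuncunit} (though not for the pentagon), and your reduction of \eqref{eq:brmod2}--\eqref{eq:brmod1} for $e'$ to the axioms for $e$ at $G(N)$ goes through as claimed (faithfulness of $F$ is not even needed there, since applying $F$ to the valid identity in $\cM$ already yields the identity in $\cM'$).
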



\subsection{Quasitriangular comodule algebras and quantum $K$-matrices} \label{sec:Kmatrix}
Assume that $H$ is a quasitriangular Hopf algebra with a
quantum $R$-matrix $R := \sum_i s_i \otimes t_i \in H \otimes H$. Here, $\otimes:= \otimes_\Bbbk$. Let $A$ be a left 
$H$-comodule algebra with coaction
\[
\delta : A \to H \otimes A, \quad a \mapsto a_{[-1]} \otimes a_{[0]}. 
\]
\begin{definition}
\label{def:quastr-coalg} 
We say that $A$ is a {\emph{quasitriangular left $H$-comodule algebra}} 
if it is equipped with an invertible element, 
\[
K:= \textstyle \sum_i g_i \otimes p_i \; \in H \otimes A \; \qquad \; \text{({\it quantum $K$-matrix})},
\]
with inverse, $K^{-1}:= \textstyle \sum_i g^i \otimes p^i \; \in H \otimes A$,  such that
\begin{align}
\begin{split}
\textstyle \sum_i (g_i)_{(1)} \otimes (g_i)_{(2)}\otimes p_i \;&= \; \textstyle \sum_{j,k,l,m} t_k g_l t^m \; \otimes \; g_j s_k s^m \; \otimes \;p_j p_l, \\[.1pc]
(\text{equivalently,} \quad  (\Delta \otimes \ide_A) K &= K_{23}R_{21} K_{13} R_{21}^{-1}),
\end{split}
\label{eq:K2}
\\[.4pc]
\begin{split}
\textstyle \sum_i g_i \otimes (p_i)_{[-1]}\otimes (p_i)_{[0]} \;&= \; \textstyle \sum_{j,k,l} t_j g_k s_l \; \otimes \; s_j t_l \; \otimes \;p_k, \\[.1pc]
(\text{equivalently,} \quad
(\ide_H \otimes \delta) K &= R_{21} K_{13} R_{12}),
\end{split}
\label{eq:K3}\\[.4pc]
\begin{split}
\textstyle \sum_i g_i a_{[-1]} \otimes p_i a_{[0]} \;&= \; \textstyle \sum_i a_{[-1]} g_i  \otimes a_{[0]}  p_i \quad \forall a \in A, \\[.1pc]
(\text{equivalently,} \quad K \delta(a) &= \delta(a) K \quad \forall a \in A).
 \end{split}
\label{eq:K1}
\end{align}
\end{definition}

\noindent Here, $K_{ab}:= \textstyle \sum_i g_i \; \text{(in the $a$-th slot)}\; \otimes \; p_i \; \text{(in the $b$-th slot)}
\otimes \; 1_H \; \text{(in other slots)}.$

\medskip

Returning to braided module categories, consider the result below.

\begin{lemma} 
\label{lem:braid-Kmatr}
Retain the notation above for the quasitriangular Hopf algebra $(H,R)$, and the left $H$-comodule algebra $A$. Then, the following statements hold.
\begin{enumerate}[\upshape (a)]
\item We have that $A\text{-}\mathsf{mod}$ is a left module category over $\lmod{H}$ via
\[
\act: \lmod{H} \times A\text{-}\mathsf{mod} \to A\text{-}\mathsf{mod}, \quad \left((X, \cdot), (M, \ast)\right) \mapsto (X \otimes M,\;  \widetilde{\ast}),
\]
where $a \; \widetilde{\ast} \; (x \otimes m) := (a_{[-1]} \cdot x) \otimes (a_{[0]} \ast m)$ for $a \in A$, $x \in X$, $m \in M$.

\smallskip

\item Take $K:= \sum_i g_i \otimes p_i \in H \otimes A$, and for $(X, \cdot) \in \lmod{H}$,  $(M, \ast) \in A\text{-}\mathsf{mod}$, take the morphism:
\[
e_{X,M}: X \otimes M \to X \otimes M, \quad
x \otimes m \mapsto \textstyle \sum_i (g_i \cdot x) \otimes (p_i \ast m).
\]
Then, the left $(\lmod{H})$-module category $(A\text{-}\mathsf{mod}, \act)$ is braided with braiding given by
$e_{X,M}$ if and only if  $K$ is a quantum $K$-matrix for $A$.

\smallskip

\item 
Conversely, any braiding for the $(\lmod{H})$-module category $\lmod{A}$ from part (a) is of the form given in part (b) for some element $K:= \textstyle  \sum_i g_i \otimes p_i \in H\otimes A$.
\end{enumerate}
\end{lemma}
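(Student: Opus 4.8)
The plan is to prove the three parts in order, working throughout inside $\Vect$ so that the module associativity constraint $m_{X,Y,M}$ and the unit constraint $\lambda_M$ are the canonical vector-space reassociation and unit isomorphisms. For part (a), I would first check that $\widetilde{\ast}$ makes $X\otimes M$ a left $A$-module: this reduces to $(ab)_{[-1]}\otimes(ab)_{[0]} = a_{[-1]}b_{[-1]}\otimes a_{[0]}b_{[0]}$ and $(1_A)_{[-1]}\otimes(1_A)_{[0]} = 1_H\otimes 1_A$, which hold because the coaction $\delta\colon A\to H\otimes A$ is a unital algebra map. Functoriality of $\act$ in both slots is immediate, so it remains to see that the vector-space associator and unitor are $A$-module isomorphisms satisfying \eqref{eq:modassoc}--\eqref{eq:modunit}. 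The pentagon and triangle already hold in $\Vect$; that the associator $X\otimes(Y\otimes M)\cong(X\otimes Y)\otimes M$ is $A$-linear is exactly coassociativity of the coaction, $(a_{[-1]})_{(1)}\otimes(a_{[-1]})_{(2)}\otimes a_{[0]} = a_{[-1]}\otimes(a_{[0]})_{[-1]}\otimes(a_{[0]})_{[0]}$, while $A$-linearity of the unitor is counitality. This establishes (a).

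For part (b), I note first that $e$ is automatically natural, since for module morphisms $\psi$ in $\lmod{H}$ and $\phi$ in $\lmod{A}$ the maps $\psi,\phi$ commute with the $\cdot$- and $\ast$-actions, hence with the fixed-element action of $K$. Next, $e_{X,M}$ is a morphism in $\lmod{A}$, i.e.\ commutes with $\widetilde{\ast}$, precisely when $\textstyle\sum_i g_i a_{[-1]}\otimes p_i a_{[0]} = \sum_i a_{[-1]}g_i\otimes a_{[0]}p_i$ holds after evaluation on arbitrary $X$, $M$; since $X,M$ are arbitrary this is equivalent to \eqref{eq:K1}. Using the braiding $c_{X,Y}(x\otimes y) = \sum_k(t_k\cdot y)\otimes(s_k\cdot x)$ from Lemma~\ref{lem:R-c}, I would then evaluate the two braiding axioms directly.

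A direct computation shows that \eqref{eq:brmod1} applied to $x\otimes(y\otimes m)$ reads $\textstyle\sum_i (g_i\cdot x)\otimes((p_i)_{[-1]}\cdot y)\otimes((p_i)_{[0]}\ast m) = \sum_{j,k,l}(t_j g_k s_l\cdot x)\otimes(s_j t_l\cdot y)\otimes(p_k\ast m)$, whose left side is governed by $(\ide_H\otimes\delta)K$ and whose right side is $R_{21}K_{13}R_{12}$; for all $X,Y,M$ this is exactly \eqref{eq:K3}. Likewise, \eqref{eq:brmod2} applied to $(x\otimes y)\otimes m$ has left-hand side governed by $(\Delta\otimes\ide_A)K = \sum_i (g_i)_{(1)}\otimes(g_i)_{(2)}\otimes p_i$, and its right-hand side, which now carries two insertions of $e$ together with $c_{Y,X}$ and $c_{Y,X}^{-1}$, collapses to $K_{23}R_{21}K_{13}R_{21}^{-1}$; hence \eqref{eq:brmod2} is equivalent to \eqref{eq:K2}. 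Finally, $e$ is a natural isomorphism if and only if $K$ is invertible, with $e^{-1}_{X,M}(x\otimes m)=\sum_i(g^i\cdot x)\otimes(p^i\ast m)$, since $e_{X,M}\circ e^{-1}_{X,M}=\ide$ for all $X,M$ is equivalent to $KK^{-1}=1_H\otimes 1_A$ in $H\otimes A$. Collecting these equivalences proves (b). I expect the evaluation matching \eqref{eq:brmod2} with \eqref{eq:K2} to be the main obstacle: it is the single hexagon-type axiom involving two copies of $e$ and both the braiding and its inverse, so the $R$-matrix bookkeeping, and in particular the appearance of $R_{21}^{-1}$, has to be tracked carefully.

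For part (c), I would use the standard evaluation-at-the-regular-objects argument. Given any braiding $e$, set $K := e_{H,A}(1_H\otimes 1_A)\in H\otimes A$, where $H$ is the regular left $H$-module and $A$ the regular left $A$-module. For $(X,\cdot)\in\lmod{H}$, $(M,\ast)\in\lmod{A}$ and $x\in X$, $m\in M$, the maps $f\colon H\to X,\ h\mapsto h\cdot x$ and $g\colon A\to M,\ a\mapsto a\ast m$ are morphisms in $\lmod{H}$ and $\lmod{A}$ respectively, and one checks directly that $f\otimes g$ is a morphism $H\act A\to X\act M$ in $\lmod{A}$. Naturality of $e$ applied to $f\act g$ and evaluated at $1_H\otimes 1_A$ then yields $e_{X,M}(x\otimes m) = (f\otimes g)(K) = \sum_i(g_i\cdot x)\otimes(p_i\ast m)$ for $K=\sum_i g_i\otimes p_i$, which is precisely the form in (b). Invertibility of $e$ forces $K$ to be invertible, with inverse $e_{H,A}^{-1}(1_H\otimes 1_A)$, completing (c).
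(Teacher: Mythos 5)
Your proof is correct, and for parts (a) and (b) it follows essentially the same route as the paper: the paper likewise leaves (a) to the reader (your coassociativity/counitality check is the intended one), and its proof of (b) is the same $R$-matrix computation you perform — the only organizational difference is that the paper evaluates the braided-module axioms at $1_H\otimes 1_H\otimes 1_A$ in the regular modules $H$ and $A$ (giving the direction braiding $\Rightarrow$ $K$-matrix, with the converse "by reversing the arguments"), whereas you compute both sides of \eqref{eq:brmod1}--\eqref{eq:brmod2} on general elements and then note that validity for all $X,Y,M$ is equivalent to the element identities \eqref{eq:K2}--\eqref{eq:K1} by specializing to regular modules; your identifications of the two sides with $(\ide_H\otimes\delta)K = R_{21}K_{13}R_{12}$ and $(\Delta\otimes\ide_A)K = K_{23}R_{21}K_{13}R_{21}^{-1}$ match the paper's computations exactly. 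The genuine difference is part (c): the paper disposes of it by citing a reconstruction argument from Majid's book (Section 9.4), while you give a short self-contained proof — set $K:=e_{H,A}(1_H\otimes 1_A)$ and apply naturality of $e$ to the morphisms $f\colon H\to X$, $h\mapsto h\cdot x$ and $g\colon A\to M$, $a\mapsto a\ast m$. This is in fact the same device the paper itself uses inside its proof of (b) to invert $K$ (via $K':=e^{-1}_{H,A}(1_H\otimes 1_A)$), so your version makes the lemma's proof entirely self-contained at negligible extra cost, which is a small but real improvement over the citation.
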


\begin{proof} Part (a) is straight-forward to check, and we leave this to the reader. For part (b), we sketch the forward direction; the reverse direction is proved by reversing the arguments. 

To proceed, note that if we write $H$ (resp.,  $A$) in the subscript of either the braiding $c$ or $e$, then this denotes the regular left $H$-module (resp.,  regular left $A$-module).
Also, denote the inverse of $e_{X,M}$ by $e_{X,M}^{-1} : X \otimes M \to X \otimes M$
and set
\[
K' := 
e_{H,A}^{-1} 
(1_H \otimes 1_A)  \in H \otimes A.
\]
By the naturality of $e_{X,M}^{-1}$, we obtain that 
$e_{X,M}^{-1}(x \otimes m) = K' \ \widetilde{\ast} \; (x \otimes m)$,
for $x \in X$, $m \in M$. This implies  $K K' = K' K = 1_H \otimes 1_A$.
Therefore, $K \in H \otimes A$ is invertible.

To establish \eqref{eq:K2}, we compute:
{\small
\begin{align*}
 \textstyle \sum_i (g_i)_{(1)} \otimes (g_i)_{(2)} \otimes p_i
 &\; =\; e_{H \otimes H, A}(1_H \otimes 1_H \otimes 1_A)
 \\
  &\; \overset{\textnormal{\eqref{eq:brmod2}}}{=}\; (\ide_H \otimes e_{H, A})  (c_{H,H} \otimes \ide_{A}) 
(\ide_{H} \otimes e_{H, A})(c_{H,H}^{-1} 
\otimes \ide_{A})(1_H \otimes 1_H \otimes 1_A) \\
&\;\overset{\textnormal{Lem.\;\ref{lem:R-c}}}{=}\; \textstyle \sum_m (\ide_{H} \otimes e_{H, A})  (c_{H,H} \otimes \ide_{A}) 
(\ide_{H} \otimes e_{H, A})(s^m \otimes t^m \otimes 1_A) \\[.2pc]
&\; =\; \textstyle \sum_{l,m}
(\ide_{H} \otimes e_{H, A})  (c_{H,H} \otimes \ide_{A})(s^m \otimes g_l t^m \otimes p_l)\\
&\; \overset{\textnormal{Lem.\;\ref{lem:R-c}}}{=} \; \textstyle \sum_{k,l,m}
(\ide_{H} \otimes e_{H, A}) (t_k g_l t^m \otimes s_k s^m \otimes p_l)
\\
&\; =\; \textstyle \sum_{j,k,l,m}\;
t_k g_l t^m \otimes g_j s_k s^m \otimes p_j p_l.
\end{align*}
}

Likewise to establish~\eqref{eq:K3}, we compute
{\small
\begin{align*}
 \textstyle \sum_i g_i  \otimes (p_i)_{[-1]} \otimes (p_i)_{[0]}
 &\; =\; e_{H,  H \otimes A}(1_H \otimes 1_H \otimes 1_A)
 \\
  &\; \overset{\textnormal{\eqref{eq:brmod1}}}{=}\; (c_{H,H} \otimes \ide_{A})( \ide_{H} \otimes e_{H,A}) 
(c_{H,H} \otimes \ide_{A}) (1_H \otimes 1_H \otimes 1_A)\\
&\; \overset{\textnormal{Lem.\;\ref{lem:R-c}}}{=} \; \textstyle \sum_{j,k,l}\;
 t_j g_k s_l  \otimes  s_j t_l  \otimes p_k.
\end{align*}
}

Finally, to show that \eqref{eq:K1} holds, recall that $e_{X,M} : X \otimes M \to X \otimes M$ is an $A$-module homomorphism. Therefore,
{\small
\begin{align*}
\textstyle \sum_i g_i a_{[-1]} \otimes p_i a_{[0]} 
& \; =  \;e_{H, A}(a_{[-1]} \otimes a_{[0]}) 
 \;\; =  \;e_{H, A}(a \; \widetilde{\ast} \; (1_H \otimes 1_A))\\[.2pc]
& \;=  \;a \; \widetilde{\ast} \; e_{H, A}(1_H \otimes 1_A)
 \;=  \;a \; \widetilde{\ast} \; (\textstyle \sum_i g_i \otimes p_i) 
 \;=  \; \textstyle \sum_i a_{[-1]} g_i \otimes a_{[0]} p_i.
\end{align*}
}

For part (c), it remains to show that any braiding $e$ on $\lmod{A}$ is given by the action of some element $K\in H\otimes A$. This follows from a reconstruction argument as in \cite{Majid}*{Section 9.4}.
\end{proof}


\begin{remark}
\label{rem:K-matrix-equiv} Let us compare the definition of the quantum $K$-matrix above with that in \cite{Kolb20}. Assume that $H$ is a ribbon Hopf algebra; that is, it is quasitriangular with quantum $R$-matrix $R$ and contains an invertible central element $v$ such that 
$\Delta(v) = (v \otimes v) \big( R_{21}R_{12} \big)^{-1}$ and $v = S(v)$. 
Set
\[
\widetilde{K} := K (v^{-1} \otimes 1_A) \in H \otimes A. 
\]
\begin{enumerate}[\upshape (a)]
\item Analogous to Remark \ref{rem:equiv-e}(a), one shows that conditions \eqref{eq:K2}--\eqref{eq:K1} are equivalent to 
\begin{align}
(\Delta \otimes \ide_A) \widetilde{K} &= \widetilde{K}_{23}R_{21} \widetilde{K}_{13} R_{21},
\label{eq:K2a}
\\
(\ide_H \otimes \delta) \widetilde{K} &= R_{21} \widetilde{K}_{13} R_{21},\label{eq:K3a}
\\
 \widetilde{K} \delta(a) &= \delta(a) \widetilde{K}, \quad \forall a \in A.
\label{eq:K1a}
\end{align}
In turn, \eqref{eq:K2a}--\eqref{eq:K1a} are equivalent to the same set of conditions with \eqref{eq:K2a} replaced by
\begin{equation}
\label{eq:K2aa}
(\Delta \otimes \ide_A) \widetilde{K} = R_{21} \widetilde{K}_{13} R_{21} \widetilde{K}_{23}.
\end{equation}
Indeed, the forward direction follows from
\[
\begin{array}{rll}
(\Delta \otimes \ide_A) \widetilde{K} &\overset{\textnormal{\eqref{eq:K2a}}}{=} \widetilde{K}_{23}R_{21} \widetilde{K}_{13} R_{21} &\overset{\textnormal{\eqref{eq:K3a}}}{=} 
\widetilde{K}_{23} \big( (\ide_H \otimes \delta) \widetilde{K} \big) 
\\[.2pc]
&\overset{\textnormal{\eqref{eq:K1a}}}{=}
\big( (\ide_H \otimes \delta) \widetilde{K} \big) \widetilde{K}_{23}
& \overset{\textnormal{\eqref{eq:K3a}}}{=}  R_{21} \widetilde{K}_{13} R_{21} \widetilde{K}_{23},
\end{array}
\]
while the opposite direction is obtained by reversing the argument. 

\smallskip

\item Conditions \eqref{eq:K3a}, \eqref{eq:K1a},  and \eqref{eq:K2aa} are precisely the conditions for a quasitriangular comodule algebra used in \cite{Kolb20}*{Definition 2.7}, with the only difference that \cite{Kolb20} works with right comodule algebras, while we work with left ones. 

\smallskip

\item Equating the right hand sides of \eqref{eq:K2a} and \eqref{eq:K2aa} gives that 
\[
\widetilde{K}_{23}R_{21} \widetilde{K}_{13} R_{21} = 
R_{21} \widetilde{K}_{13} R_{21} \widetilde{K}_{23},
\]
from which it follows that $\widetilde{K}$ and $R$ define representations of the braid groups of type $B$.
\end{enumerate}
\end{remark}


\section{Reflective centers of module categories} \label{sec:refl-cent}
In this part, we introduce and study the reflective center of a module category $\cM$ over a braided monoidal category $\cC$. Preliminary results on this construction are presented in Section~\ref{sec:ECM}. Then, in Section~\ref{sec:refcent-bimod}, we realize the reflective center of $\cM$ as a center of a certain $\cC$-bimodule category. This enables us to establish properties of reflective centers such as being abelian, finite, and semisimple in the case when $\cC$ is a braided tensor category.

\subsection{Preliminaries on reflective centers} \label{sec:ECM}

We introduce the terminology below. 

\begin{definition} \label{def:ECM}
Let $(\cC, \otimes, \one, a, l, r, c)$ be a braided monoidal category, and let $(\cM, \act, m, \lambda)$ be a left $\cC$-module category.
The {\it reflective center of $\cM$ with respect to $\cC$}   is a category $\cE_{\cC}(\cM)$ defined as follows. 
\begin{enumerate}[(a)]
    \item Its objects are pairs $(M, e^M)$, where $M$ is an object of $\cM$, and 
    \[
    e^M:=\{e^M_Y\colon  Y \act M \overset{\sim}{\to} Y \triangleright M\}_{Y \in \cC} \quad \text{({\it reflection})}
    \]
    is a natural isomorphism such that  $e^M_{X \otimes Y} \; (= e_{X \otimes Y, M})$ satisfies \eqref{eq:brmod2} for all $X,Y \in \cC$. 
    
\smallskip

\item The morphisms $(M, e^M) \to (N, e^N)$ are given by morphisms $f \in \Hom_{\cM}(M,N)$ such that, for all $Y \in \cC$: $$(\ide_Y \act f) \circ e^M_Y = e^N_Y \circ (\ide_Y \act f).$$ 
\end{enumerate}
\end{definition}

\begin{lemma} \label{lem:ECM-modcat}
 Retain the notation above. We have that $\cE_\cC(\cM)$ is a left $\cC$-module category, where by the abusing notation $\act$, the action bifunctor $\act: \cC \times \cE_{\cC}(\cM) \to \cE_{\cC}(\cM)$ is defined by 
$$Y  \act  (M,e^M):=(Y \act M,\; e^{Y \act M}),$$  
where $e^{Y \act M}_X \; (= e_{X, Y \act M})$ is defined by \eqref{eq:brmod1} 
for all $X,Y \in \cC$, and the associativity isomorphism is that of the left $\cC$-module category $\cM$.
\end{lemma}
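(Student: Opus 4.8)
The plan is to verify that the proposed data $(\act, m, \lambda)$ on $\cE_\cC(\cM)$ indeed defines a left $\cC$-module category structure, with the associativity and unitality constraints inherited from $\cM$. The main content is well-definedness: I must check that for each object $(M, e^M) \in \cE_\cC(\cM)$ and each $Y \in \cC$, the pair $Y \act (M, e^M) = (Y \act M, e^{Y \act M})$ is actually an object of $\cE_\cC(\cM)$, i.e. that the natural isomorphism $e^{Y \act M}$ defined via \eqref{eq:brmod1} satisfies the defining axiom \eqref{eq:brmod2}. So the \textbf{first} step is to take $e^M$ satisfying \eqref{eq:brmod2}, form $e^{Y \act M}_X := e_{X, Y \act M}$ using formula \eqref{eq:brmod1}, and then substitute this into the left-hand side of \eqref{eq:brmod2} with $e^{Y \act M}$ in place of $e^M$, expanding the required identity $e^{Y \act M}_{X \otimes X'} = \cdots$ for all $X, X' \in \cC$.

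\textbf{Second}, I would carry out this verification by a direct diagrammatic computation, repeatedly inserting the definition \eqref{eq:brmod1} of $e_{-, Y \act M}$ and the hypothesis that $e^M$ obeys \eqref{eq:brmod2}, together with the module-category pentagon \eqref{eq:modassoc} to reorganize the associativity constraints $m$, the hexagon axioms \eqref{eq:braid1}--\eqref{eq:braid2} for the braiding $c$ of $\cC$, and naturality of $m$ and $c$. Conceptually, the cleanest route is to recognize that formulas \eqref{eq:brmod1} and \eqref{eq:brmod2} are exactly the two compatibility axioms for a braiding $e$ on a $\cC$-module category, so that the assignment $(M, e^M) \mapsto (Y \act M, e^{Y\act M})$ is really the statement that the braiding $e$ extends consistently along the action. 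Thus I expect the verification to reduce to the same coherence manipulations already used in the proof of Proposition~\ref{prop:brmodsource}(b), where \eqref{eq:brmod1} and \eqref{eq:brmod2} were checked to be compatible via the hexagon axioms.

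\textbf{Third}, once well-definedness on objects is settled, I would check that $\act$ is functorial on morphisms: given $f\colon (M, e^M) \to (N, e^N)$ in $\cE_\cC(\cM)$, the morphism $\ide_Y \act f$ must intertwine $e^{Y\act M}$ and $e^{Y \act N}$; this follows from naturality of $m$ and $c$ together with the fact that $f$ intertwines $e^M$ and $e^N$, by plugging $f$ into formula \eqref{eq:brmod1}. Finally, since the associativity isomorphism $m_{X,Y,M}$ and unit isomorphism $\lambda_M$ are taken to be those of $\cM$, the pentagon \eqref{eq:modassoc} and triangle \eqref{eq:modunit} axioms hold automatically on underlying objects; it only remains to observe that $m_{X,Y,M}$ and $\lambda_M$ are morphisms in $\cE_\cC(\cM)$, i.e. that they intertwine the respective reflections, which again follows from the defining formulas \eqref{eq:brmod1}--\eqref{eq:brmod2} and the coherence of $\cM$.

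The hard part will be the object-level verification in the first two steps: showing that $e^{Y\act M}$, built from \eqref{eq:brmod1}, still satisfies \eqref{eq:brmod2}. This is a lengthy string diagram or Sweedler-style calculation juggling four copies of the associativity constraint, two braidings, and the inductive use of the hypothesis on $e^M$; the bookkeeping of which instance of $m$ and which instance of $c$ appears where is the genuine obstacle, whereas the morphism and coherence checks are routine consequences of naturality. I would organize the computation so that the hexagon axioms \eqref{eq:braid1}--\eqref{eq:braid2} are applied to collapse the braiding terms exactly as in Proposition~\ref{prop:brmodsource}(b), and so that \eqref{eq:brmod2} for $e^M$ is invoked once to replace the nested $e^M$ terms.
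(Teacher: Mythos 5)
Your plan follows the paper's proof essentially step for step: the main content is the object-level check that $e^{Y\act M}$, defined via \eqref{eq:brmod1}, satisfies \eqref{eq:brmod2}, carried out by expanding $e^{Y\act M}_{X\otimes X'}$ and invoking \eqref{eq:brmod2} for $e^M$, the module pentagon \eqref{eq:modassoc}, the hexagon \eqref{eq:braid1}, and naturality of $c$ and $m$ --- exactly the tools the paper uses; and your closing remark that $m_{X,Y,M}$ and $\lambda_M$ must themselves be shown to be morphisms in $\cE_\cC(\cM)$ matches the paper's observation that this is ``a similar computation.'' One caveat: your hope that the verification ``reduces'' to the coherence manipulations of Proposition~\ref{prop:brmodsource}(b) is only an analogy, not a formal reduction --- that proposition concerns a braiding built from a braided monoidal functor on a different module category, so here you still need the self-contained computation that the paper performs.

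The one genuine omission is in your functoriality step. The action is a bifunctor $\act\colon \cC\times\cE_\cC(\cM)\to\cE_\cC(\cM)$, so it must be functorial in \emph{both} variables, and you only verify the second slot, namely that $\ide_Y\act f$ is a morphism in $\cE_\cC(\cM)$ for $f\colon (M,e^M)\to(N,e^N)$. One must also check that for a morphism $\psi\colon Y\to Y'$ in $\cC$, the map $\psi\act\ide_M$ intertwines $e^{Y\act M}$ and $e^{Y'\act M}$; this is not automatic, because $e^{Y\act M}$ genuinely depends on $Y$ through the instances of $c_{Y,X}$, $c_{X,Y}$ and $m_{X,Y,M}$ appearing in formula \eqref{eq:brmod1}. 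The paper handles both slots simultaneously by proving $(\ide_X\act(f\act g))\circ e^{Y\act M}_X = e^{Y'\act M'}_X\circ(\ide_X\act(f\act g))$ for $f\colon Y\to Y'$ in $\cC$ and $g\colon(M,e^M)\to(M',e^{M'})$ in $\cE_\cC(\cM)$ at once, using naturality of $m$ and $c$ together with the intertwining property of $g$. The missing check is routine and uses exactly the tools you already list, so this is an incompleteness rather than a wrong turn; but as written, your plan does not establish that $\act$ is defined on all morphisms of $\cC\times\cE_\cC(\cM)$.
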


\begin{proof}
First, we need to show that $\act$ is well-defined on objects and on morphisms. 

Given objects $W,X,Y \in \cC$ and $M \in \cM$, we need to show that $e^{Y \act M}_{W \otimes X}$ defined by~\eqref{eq:brmod1} satisfies~\eqref{eq:brmod2} as in Definition~\ref{def:ECM}(a). This is achieved by the following computation.
{\small
\begin{align*}
e^{Y \act M}_{W \otimes X} &= 
m_{W \otimes X,Y,M} \; (c_{Y,W \otimes X} \act \ide_M) \; m_{Y,W \otimes X,M}^{-1} \; (\ide_Y \act e_{W \otimes X}^M) \; m_{Y,W \otimes X,M} \; (c_{W \otimes X,Y} \act \ide_M) \; m_{W \otimes X,Y,M}^{-1}\\[4pt]
 &= 
m_{W \otimes X,Y,M} \; (c_{Y,W \otimes X} \act \ide_M) \; m_{Y,W \otimes X,M}^{-1} \;  [\ide_Y \act  \left(m_{W,X,M}^{-1} \; (\ide_W \act e_X^M) \; m_{W,X,M} \; (c_{X,W} \act \ide_M)\right)] \smallskip \\
  &\quad  \circ [\ide_Y \act  \left(m_{X,W,M}^{-1} \; (\ide_X \act e_W^M) \; m_{X,W,M}\; (c_{X,W}^{-1} \act \ide_M)\right)] \; m_{Y,W \otimes X,M} \; (c_{W \otimes X,Y} \act \ide_M) \; m_{W \otimes X,Y,M}^{-1}\\[4pt]
  &=  m_{W,X,{Y \act M}}^{-1} \; [\ide_W \act \left( m_{X,Y,M} \; (c_{Y,X} \act \ide_M) \; m_{Y,X,M}^{-1} \; (\ide_Y \act e_X^M)\right)] \\
  & \quad  \circ [\ide_W \act \left(m_{Y,X,M} \; (c_{X,Y} \act \ide_M) \; m_{X,Y,M}^{-1} \right)] \; m_{W,X,{Y \act M}} \; (c_{X,W} \act \ide_{Y \act M})
 \smallskip \\
 &\quad \circ m_{X,W,{Y \act M}}^{-1} \; [\ide_X \act \left( m_{W,Y,M} \; (c_{Y,W} \act \ide_M) \; m_{Y,W,M}^{-1} \; (\ide_Y \act e_W^M)\right)] \\
  & \quad \circ [\ide_X \act \left(m_{Y,W,M} \; (c_{W,Y} \act \ide_M) \; m_{W,Y,M}^{-1} \right)] \; m_{X,W,{Y \act M}}\; (c_{X,W}^{-1} \act \ide_{Y \act M}) \\[8pt]
 &=  m_{W,X,{Y \act M}}^{-1} \; (\ide_W \act e_X^{Y \act M}) \; m_{W,X,{Y \act M}} \; (c_{X,W} \act \ide_{Y \act M}) \; m_{X,W,{Y \act M}}^{-1} \; (\ide_X \act e_W^{Y \act M})  \smallskip \\
 &\quad \circ m_{X,W,{Y \act M}}\; (c_{X,W}^{-1} \act \ide_{Y \act M}).     
\end{align*}
}

\noindent Here, the first and last equations hold by \eqref{eq:brmod1}; the second equation by Definition~\ref{def:ECM}(b) for $e^M$; and the third equation follows from  \eqref{eq:modassoc}, from the braid axiom \eqref{eq:braid1}, and the naturality of $c$. A similar computation shows that the original associativity isomorphism $m_{X,Y,M}$ indeed defines a morphism in $\cE_\cC(\cM)$.

Next, given an object $X \in \cC$, along with  morphisms $f:Y \to Y'$ in $\cC$ and $g:M \to M'$ in $\cM$, we need to show that $(\ide_X \act (f \act g))\circ e^{Y \act M}_X = e^{Y' \act M'}_X \circ (\ide_X \act (f \act g))$ as in Definition~\ref{def:ECM}(b). This is done as follows.
{\small
\begin{align*}
&(\ide_X \act (f \act g))\; e^{Y \act M}_X\\[2pt]
&=
(\ide_X \act (f \act g))\; m_{X,Y,M} \; (c_{Y,X} \act \ide_M) \; m_{Y,X,M}^{-1} \; (\ide_Y \act e_X^M)\; m_{Y,X,M} \; (c_{X,Y} \act \ide_M) \; m_{X,Y,M}^{-1}\\[2pt]
&=
m_{X,Y',M'} \; (c_{Y',X} \act \ide_M) \; m_{Y',X,M'}^{-1} \; (f \act (\ide_X \act g))\circ (\ide_Y \act e_X^M)\; m_{Y,X,M} \; (c_{X,Y} \act \ide_M) \; m_{X,Y,M}^{-1}\\[2pt]
&=
m_{X,Y',M'} \; (c_{Y',X} \act \ide_M) \; m_{Y',X,M'}^{-1} \; (\ide_{Y'} \act e_X^{M'})\;  (f \act (\ide_X \act g)) \; m_{Y,X,M} \; (c_{X,Y} \act \ide_M) \; m_{X,Y,M}^{-1}\\[2pt]
&= m_{X,Y',M'} \; (c_{Y',X} \act \ide_{M'}) \; m_{Y',X,M'}^{-1} \; (\ide_{Y'} \act e_X^{M'})\; m_{Y',X,M'} \; (c_{X,Y'} \act \ide_{M'}) \; m_{X,Y',M'}^{-1} \; (\ide_X \act (f \act g))\\[2pt]
&=e^{Y' \act M'}_X \; (\ide_X \act (f \act g)).
\end{align*}
}

\noindent Here, the first and last equations hold by \eqref{eq:brmod1}; the second and fourth equations hold by the naturality of  $m$ and of  $c$; and the third equation holds by Definition~\ref{def:ECM}(b) for $e^M$.

Therefore, the $\cC$-action bifunctor $\act$ for $\cE_\cC(\cM)$ is well-defined. It also satisfies  \eqref{eq:modassoc} and \eqref{eq:modunit} because they are satisfied for the $\cC$-action bifunctor for $\cM$. 
\end{proof}

An important feature of $\cE_{\cC}(\cM)$ is that the reflections $e^M$ in Definition~\ref{def:ECM}(a) equip this module category with a braiding (as in Section~\ref{sec:bmc}). 

\begin{proposition} \label{prop:ECM-brmodcat}
Take  a braided monoidal category $(\cC,c)$ and a left $\cC$-module category $\cM$. Then,  the reflective center $\cE_{\cC}(\cM)$ is a braided left $\cC$-module category, where
\[
\xymatrix@C=4pc{
e_{Y,(M, e^M)}:\;  Y \act (M,e^M)
\ar[r]^(.58){e^M_Y}
& Y \act (M,e^M),
}
\]
for $Y \in \cC$ and $(M,e^M) \in \cE_\cC(\cM)$. Here, $Y \act (M,e^M):= (Y \act M, \;e^{Y \act M})$ by Lemma~\ref{lem:ECM-modcat}.
\end{proposition}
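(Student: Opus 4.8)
The plan is to build on Lemma~\ref{lem:ECM-modcat}, which already equips $\cE_\cC(\cM)$ with its left $\cC$-module category structure, so that only the braiding remains to be verified. Writing the candidate braiding as $e_{Y,(M,e^M)} := e^M_Y$, I would check three things: (i) that $e^M_Y$ is a genuine endomorphism of $Y\act(M,e^M) = (Y\act M, e^{Y\act M})$ in $\cE_\cC(\cM)$; (ii) that the family $\{e_{Y,(M,e^M)}\}$ is natural in both arguments; and (iii) that it satisfies the braided module category axioms \eqref{eq:brmod2} and \eqref{eq:brmod1}.

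Steps (iii) and (ii) are essentially formal, and I would dispatch them first. Axiom \eqref{eq:brmod2} reads that $e_{X\otimes Y,(M,e^M)} = e^M_{X\otimes Y}$ must equal the prescribed composite built from $e^M_X$ and $e^M_Y$, which is precisely the defining requirement on objects of $\cE_\cC(\cM)$ in Definition~\ref{def:ECM}(a); and axiom \eqref{eq:brmod1} holds because, writing $Y\act(M,e^M) = (Y\act M, e^{Y\act M})$ as in Lemma~\ref{lem:ECM-modcat}, one has $e_{X, Y\act(M,e^M)} = e^{Y\act M}_X$, and $e^{Y\act M}_X$ is by construction defined through \eqref{eq:brmod1}. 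For naturality (ii), naturality in the $\cC$-variable is exactly the naturality of the reflection $e^M$ from Definition~\ref{def:ECM}(a), while naturality in the $\cE_\cC(\cM)$-variable unwinds, for a morphism $f\colon(M,e^M)\to(N,e^N)$, to $e^N_Y\circ(\ide_Y\act f) = (\ide_Y\act f)\circ e^M_Y$, which is precisely the morphism condition of Definition~\ref{def:ECM}(b); combining these with the bifunctoriality of $\act$ yields naturality in full.

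The substantive step is (i): I must verify the morphism condition of Definition~\ref{def:ECM}(b) for $e^M_Y$ viewed as an endomorphism of $(Y\act M, e^{Y\act M})$, namely that $(\ide_W\act e^M_Y)\circ e^{Y\act M}_W = e^{Y\act M}_W\circ(\ide_W\act e^M_Y)$ for all $W\in\cC$. I would prove this by a direct diagram chase: expand $e^{Y\act M}_W$ via its definition \eqref{eq:brmod1}, transport $\ide_W\act e^M_Y$ across the associativity constraints $m$ and braiding morphisms $c$ using the naturality of $m$ and of $c$, and reduce the resulting expression with the hexagon axioms \eqref{eq:braid1}--\eqref{eq:braid2} together with the defining relation \eqref{eq:brmod2} for $e^M$. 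This computation is in the same spirit as, and of comparable length to, the verification in Lemma~\ref{lem:ECM-modcat} that $\act$ is well defined on objects, and I expect it to be the main obstacle: it is the only place where the reflection $e^M$ at $Y$ and the induced reflection $e^{Y\act M}$ at $W$ genuinely interact, the associativity bookkeeping is delicate, and \eqref{eq:brmod2} must be invoked in exactly the rearranged form that isolates $\ide_W\act e^M_Y$. Once (i) is established, the proposition follows by assembling (i)--(iii).
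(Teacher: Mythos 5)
Your overall skeleton coincides with the paper's: the module structure is quoted from Lemma~\ref{lem:ECM-modcat}; your steps (ii) and (iii) are exactly the paper's closing observations (axiom \eqref{eq:brmod2} holds by Definition~\ref{def:ECM}(a), axiom \eqref{eq:brmod1} holds tautologically because $e^{Y\act M}$ is \emph{defined} by \eqref{eq:brmod1} in Lemma~\ref{lem:ECM-modcat}, and naturality in the two variables is Definition~\ref{def:ECM}(a) and (b), respectively); and you correctly isolate the morphism condition (i) as the only substantive point. The problem is that your plan for (i) would not go through as described. The reflection component $e^M_Y$ is just a morphism of $\cM$ at the object $Y\act M$; it is not of the form $g\act h$ with $g$ a morphism of $\cC$. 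Consequently $\ide_W\act e^M_Y$ cannot be ``transported across'' $m_{W,Y,M}$ by naturality of $m$, nor across morphisms of the form $c_{-,-}\act\ide_M$ by naturality of $c$ (which only concerns morphisms of $\cC$), and the hexagon axioms \eqref{eq:braid1}--\eqref{eq:braid2} never enter this step (they are needed in the proof of Lemma~\ref{lem:ECM-modcat}, which both you and the paper take as given, but not here). The tool missing from your list is the naturality of the reflection $e^M$ \emph{itself}, applied to the braiding morphism $c_{Y,W}\colon Y\otimes W\to W\otimes Y$ of $\cC$, which yields $e^M_{W\otimes Y}\circ(c_{Y,W}\act\ide_M)=(c_{Y,W}\act\ide_M)\circ e^M_{Y\otimes W}$; without this, the two sides of (i) cannot be reconciled.

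With that ingredient, the paper's verification of (i) takes only four equalities and requires no term-by-term transport. Writing $X$ for your $W$: first expand $e^{Y\act M}_X$ by \eqref{eq:brmod1}; then use \eqref{eq:brmod2} in precisely the rearranged form you anticipated, which absorbs the whole block $(\ide_X\act e^M_Y)\,m_{X,Y,M}\,(c_{Y,X}\act\ide_M)\,m^{-1}_{Y,X,M}\,(\ide_Y\act e^M_X)\,m_{Y,X,M}$ at once into $m_{X,Y,M}\,e^M_{X\otimes Y}\,(c_{Y,X}\act\ide_M)$; then apply naturality of $e^M$ to $c_{Y,X}$ to replace $e^M_{X\otimes Y}\,(c_{Y,X}\act\ide_M)$ by $(c_{Y,X}\act\ide_M)\,e^M_{Y\otimes X}$; finally re-expand $e^M_{Y\otimes X}$ by \eqref{eq:brmod2} (now for the pair $(Y,X)$), which produces exactly $e^{Y\act M}_X\circ(\ide_X\act e^M_Y)$. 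So your instinct about isolating $\ide_W\act e^M_Y$ via \eqref{eq:brmod2} was right, but the reduction is completed by a second application of \eqref{eq:brmod2} together with naturality of $e^M$, not by the hexagon axioms or the naturality of the module constraints.
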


\begin{proof}
We have that $\cE_{\cC}(\cM)$ is a left $\cC$-module category by Lemma~\ref{lem:ECM-modcat}. So, it suffices to show that $e_{Y,(M, e^M)}:=e^M_Y$ is a braiding for $\cE_{\cC}(\cM)$. 
First,  we verify that $e^M_Y$ is a morphism in $\cE_{\cC}(\cM)$. We compute that, for all $X,Y\in \cC$ and $M \in \cM$: 
{\small
\begin{align*}
&(\ide_X \act e^M_Y) \; e^{Y \act M}_X\\[2pt] 
&= (\ide_X \act e_Y^M) \; m_{X,Y,M} \; (c_{Y,X} \act \ide_M) \; m_{Y,X,M}^{-1} \; (\ide_Y \act e_X^M) \; m_{Y,X,M} \; (c_{X,Y} \act \ide_M) \; m_{X,Y,M}^{-1}\\[2pt]
&= m_{X,Y,M}  \; e_{X \otimes Y}^M \; (c_{Y,X} \act \ide_M)  \; (c_{X,Y} \act \ide_M) \; m_{X,Y,M}^{-1}\\[2pt]
&= m_{X,Y,M}  \; (c_{Y,X} \act \ide_M) \;  e_{Y \otimes X}^M \; (c_{X,Y} \act \ide_M) \; m_{X,Y,M}^{-1}\\[2pt]
&= m_{X,Y,M} \; (c_{Y,X} \act \ide_M) \; m^{-1}_{Y,X,M} \; (\ide_Y \act e_X^M)\; m_{Y,X,M} \;(c_{X,Y} \act \ide_M) \; m^{-1}_{X,Y,M} \; (\ide_X \act e_Y^M)\\[2pt]
&= e^{Y \act M}_X \; (\ide_X \act e^M_Y).
\end{align*}
}

\noindent The first and last equations hold  by Lemma~\ref{lem:ECM-modcat}. The second and fourth equations hold by Definition~\ref{def:ECM}(a). The third equation holds by the naturality of $c$ in the first slot. 
 
 Now we are done since the morphism $e^M_Y$ of $\cE_{\cC}(\cM)$ is an isomorphism by Definition~\ref{def:ECM}(a), and it also satisfies the first braided module category axiom \eqref{eq:brmod2} by Definition~\ref{def:ECM}(a) and the second braided module category axiom \eqref{eq:brmod1} by Lemma~\ref{lem:ECM-modcat}.
\end{proof}

\subsection{Reflective centers as  centers of bimodule categories} \label{sec:refcent-bimod}

Given a braided monoidal category $(\cC,c)$, any left $\cC$-module category $(\cM,\triangleright,m,\lambda)$ is also a right $\cC$-module category $(\cM,\triangleleft,n,\rho)$, where for all $X,Y\in \cC$, $M\in \cM$, we define
\begin{gather*}
    M\triangleleft X:= X\triangleright M,
\end{gather*}
and we define the structure morphisms $n_{M,X,Y}$, $\rho_{M}$, $b_{X,M,Y}$ as follows:
\[
\xymatrix@C=4pc{
n_{M,X,Y} : \; (X \otimes Y) \act M
\ar[r]^(.57){c_{X,Y} \triangleright \ide_{M}}
& 
(Y \otimes X) \act M
\ar[r]^(.49){m_{Y,X,M}}
& 
Y \act (X \act M),
}
\]

\vspace{-.15in} 

\[
\xymatrix@C=4pc{
\rho_M:\; \one \act M
\ar[r]^(.59){\lambda_M}
& 
M,
}
\]

\vspace{-.2in} 

\[
\xymatrix@C=3.8pc{
b_{X,M,Y} : \; 
Y \act (X \act M)
\ar[r]^(.57){m_{Y,X,M}^{-1}}
& 
(Y \otimes X) \act M
\ar[r]^(.49){c_{X,Y}^{-1} \triangleright \ide_{M}}
& 
(X \otimes Y) \act M
\ar[r]^(.48){m_{X,Y,M}}
&
X \act (Y \act M).
}
\]

\vspace{.05in}

\noindent We denote the data $\left(\cM, \; \act, \; \triangleleft:=\act, \;m, \;n:=m(c \triangleright \ide), \;\lambda, \;\rho:=\lambda, \; b:=m(c^{-1} \triangleright \ide)m^{-1}\right)$ by $\cM_{\textnormal{bim}}$, and this is referred to as a {\it one-sided bimodule category}.

\begin{lemma}\label{lem:Mbimodule}
Given the setting above, we have the following statements.
\begin{enumerate}[\upshape (a)]
\item $\cM_{\textnormal{bim}}$ is a $\cC$-bimodule category, and thus is a left $(\cC \boxtimes \cC^{\otimes \textnormal{op}})$-module category. 
\smallskip
\item If $\cM$ is an exact left $\cC$-module category, then $\cM_{\textnormal{bim}}$ is an exact left $(\cC \boxtimes \cC^{\otimes \textnormal{op}})$-module category. 
\end{enumerate}
\end{lemma}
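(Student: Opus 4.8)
The plan is to prove part (a) by a direct verification of the bimodule axioms, then obtain the $(\cC\boxtimes\cC^{\otimes\textnormal{op}})$-module structure for free from Remark~\ref{rem:bimod}, and to prove part (b) by identifying the projectives of the Deligne product. First I would check that $(\cM,\triangleleft,n,\rho)$ is a right $\cC$-module category. The right-hand pentagon for $n_{M,X,Y}=m_{Y,X,M}\circ(c_{X,Y}\act\ide_M)$ should reduce, after inserting the definition of $n$ and using the naturality of $c$, to the left-module pentagon \eqref{eq:modassoc} for $m$ together with the hexagon identities \eqref{eq:braid1}--\eqref{eq:braid2}; the mechanism is that the braidings of tensor products $c_{X\otimes Y,Z}$ and $c_{X,Y\otimes Z}$ occurring on the two sides expand through the hexagon axioms into composites of $c_{X,Z}$, $c_{Y,Z}$, $c_{X,Y}$, which then match after applying \eqref{eq:modassoc}. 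The right-hand triangle for $\rho_M=\lambda_M$ follows from the left triangle \eqref{eq:modunit} and the standard fact that the braiding of a braided category is compatible with the unit constraints; see \cite{JoyalStreet,EGNO}.

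Next I would verify the compatibility conditions relating $b_{X,M,Y}=m_{X,Y,M}\circ(c_{X,Y}^{-1}\act\ide_M)\circ m_{Y,X,M}^{-1}$ to $m$ and $n$ that are required of a $\cC$-bimodule category \cite{Greenough}. Each of these is a diagram chase which, once $n$ and $b$ are unwound in terms of $m$ and $c^{\pm 1}$, collapses to an instance of a hexagon axiom and the naturality of $c$, with the associativity constraints absorbed by \eqref{eq:modassoc}. Once these hold, the final assertion of (a) is immediate from Remark~\ref{rem:bimod}, since a $\cC$-bimodule category is precisely the datum of a left $(\cC\boxtimes\cC^{\otimes\textnormal{op}})$-module category under $(X\boxtimes X')\bar{\act} M \leftrightsquigarrow (X\act M)\triangleleft X'$.

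For part (b), I would first note that $\cC\boxtimes\cC^{\otimes\textnormal{op}}$ is again a finite tensor category, so the notion of exactness applies. Every projective object of the Deligne product $\cC\boxtimes\cC^{\otimes\textnormal{op}}$ is a direct summand of a direct sum of objects $P\boxtimes P'$ with $P$ projective in $\cC$ and $P'$ projective in $\cC^{\otimes\textnormal{op}}$ (see \cite{EGNO}), so it suffices to treat $P\boxtimes P'$. Using Remark~\ref{rem:bimod} and the definition $\triangleleft=\act$, one has $(P\boxtimes P')\bar{\act} M = (P\act M)\triangleleft P' = P'\act(P\act M)$. Since the underlying abelian categories of $\cC$ and $\cC^{\otimes\textnormal{op}}$ coincide, $P'$ is projective in $\cC$; hence exactness of $\cM$ over $\cC$, applied with the object $P\act M\in\cM$, shows that $P'\act(P\act M)$ is projective. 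As summands of projectives are projective, $\cM_{\textnormal{bim}}$ is exact over $\cC\boxtimes\cC^{\otimes\textnormal{op}}$.

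The main obstacle is the bimodule-compatibility step of part (a): unlike the right-module axioms, it involves the left action, the right action, and the constraint $b$ simultaneously, so its diagram chase is the longest and the easiest place to slip, and one must track carefully that the single inverse braiding inside $b$ pairs with the forward braidings hidden in $n$ on both sides of the identity. Everything else is a routine application of \eqref{eq:modassoc}, \eqref{eq:modunit}, \eqref{eq:braid1}, \eqref{eq:braid2}, and the naturality of $c$.
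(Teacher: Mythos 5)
Your proposal is correct, but it takes a different route from the paper, whose proof is purely by citation: part (a) is deferred to \cite{Greenough}*{Proposition~7.1} (together with Remark~\ref{rem:bimod}), and part (b) to the remarks around \cite{DavydovNikshych2013}*{Equation~18}. What you do instead is re-derive both facts. For (a), your outline --- right-module pentagon and triangle for $n$ and $\rho$ obtained from the hexagon axioms \eqref{eq:braid1}--\eqref{eq:braid2}, naturality of $c$, and \eqref{eq:modassoc}--\eqref{eq:modunit}, followed by the compatibility pentagons for $b$ --- is precisely the verification carried out in Greenough's proposition, so this is the same mathematics inlined rather than cited; note, however, that you describe the mechanism of the diagram chases without executing them, so as written part (a) remains a sketch at exactly the step you yourself flag as the delicate one. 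For (b), your argument is complete and self-contained: projectives of $\cC \boxtimes \cC^{\otimes \textnormal{op}}$ are direct summands of finite direct sums of objects $P \boxtimes P'$ with both factors projective; by the definitions $\triangleleft := \act$ and Remark~\ref{rem:bimod} one has $(P \boxtimes P')\, \bar{\act}\, M = P' \act (P \act M)$; and since $\cC$ and $\cC^{\otimes \textnormal{op}}$ share the same underlying abelian category, $P'$ is projective in $\cC$, so exactness of $\cM$ over $\cC$ applied to the object $P \act M$ gives projectivity, with additivity of the action functor handling direct sums and summands. This is a transparent elementary proof of what the paper obtains by citation, and it even shows the slightly stronger fact that projectivity of a single boxtimes-factor suffices. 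The trade-off is the usual one: the paper's citations are shorter and lean on the literature, while your version makes the lemma self-contained and, in part (b), arguably more informative.
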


\begin{proof}
Part~(a) follows from \cite{Greenough}*{Proposition~7.1}; see also Remark~\ref{rem:bimod}.
Part~(b) follows from remarks in \cite{DavydovNikshych2013}*{Equation~18}. 
\end{proof}

Next, recall the notion of a center of a bimodule category from Definition~\ref{def:ZCM}, and consider the connection to reflective centers below.

\begin{proposition} \label{prop:ECM-ZCM}
Retain the notation above.  Then we have that  $\cE_{\cC}(\cM)$ and $\cZ_{\cC}(\cM_{\textnormal{bim}})$ are isomorphic as categories.
\end{proposition}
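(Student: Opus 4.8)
The plan is to show that the tautological assignment $(M,e^M)\mapsto(M,e^M)$, sending a reflection to a half-braiding with no change of underlying data and acting as the identity on morphisms, defines an isomorphism of categories $\cE_\cC(\cM)\isomorph\cZ_\cC(\cM_{\textnormal{bim}})$. First I would observe that the two categories have literally the same underlying data: since the right action of $\cM_{\textnormal{bim}}$ satisfies $M\triangleleft X=X\act M$, a half-braiding $d^M_X\colon X\act M\isomorph M\triangleleft X$ of $\cZ_\cC(\cM_{\textnormal{bim}})$ is an automorphism of $X\act M$, exactly as is a reflection $e^M_X$; in both cases the datum is a natural isomorphism $\{f_X\colon X\act M\isomorph X\act M\}_{X\in\cC}$. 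The morphism conditions also match on the nose: by construction of $\cM_{\textnormal{bim}}$ one has $f\triangleleft\ide_X=\ide_X\act f$, so the condition $(f\triangleleft\ide_X)\circ d^M_X=d^N_X\circ(\ide_X\act f)$ of Definition~\ref{def:ZCM} is identical to the condition in Definition~\ref{def:ECM}(b). Thus everything reduces to matching the two coherence conditions on objects.

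The crux is to prove, for a natural family $\{e^M_X\}$, that the reflective-center axiom \eqref{eq:brmod2} (for all $X,Y$) is equivalent to the center axiom \eqref{eq:dXYM} (for all $X,Y$) after inserting the bimodule constraints of $\cM_{\textnormal{bim}}$. I would substitute $n_{M,X,Y}^{-1}=(c_{X,Y}^{-1}\act\ide_M)\circ m_{Y,X,M}^{-1}$, $b^{-1}_{X,M,Y}=m_{Y,X,M}\circ(c_{X,Y}\act\ide_M)\circ m_{X,Y,M}^{-1}$, and $d^M_X\triangleleft\ide_Y=\ide_Y\act d^M_X$ directly into \eqref{eq:dXYM}. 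Writing
\[
W:=m_{Y,X,M}^{-1}\,(\ide_Y\act e^M_X)\,m_{Y,X,M}\,(c_{X,Y}\act\ide_M)\,m_{X,Y,M}^{-1}\,(\ide_X\act e^M_Y)\,m_{X,Y,M},
\]
the substituted form of \eqref{eq:dXYM} reads $e^M_{X\otimes Y}=(c_{X,Y}^{-1}\act\ide_M)\circ W$, whereas $W$ is exactly the interior of axiom \eqref{eq:brmod2} evaluated at the \emph{transposed} pair $(Y,X)$; indeed \eqref{eq:brmod2} at $(Y,X)$ reads $e^M_{Y\otimes X}=W\circ(c_{X,Y}^{-1}\act\ide_M)$, and hence $W=e^M_{Y\otimes X}\circ(c_{X,Y}\act\ide_M)$.

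The key step, which I expect to be the main (if mild) obstacle to set up cleanly, is to close the loop with naturality. Feeding $W=e^M_{Y\otimes X}\circ(c_{X,Y}\act\ide_M)$ back into the substituted \eqref{eq:dXYM} gives
\[
e^M_{X\otimes Y}=(c_{X,Y}^{-1}\act\ide_M)\circ e^M_{Y\otimes X}\circ(c_{X,Y}\act\ide_M).
\]
Now naturality of the family $\{e^M_Z\}$ applied to the braiding morphism $c_{X,Y}\colon X\otimes Y\to Y\otimes X$ of $\cC$ yields $(c_{X,Y}\act\ide_M)\circ e^M_{X\otimes Y}=e^M_{Y\otimes X}\circ(c_{X,Y}\act\ide_M)$, so the right-hand side collapses to $e^M_{X\otimes Y}$, an identity. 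Therefore \eqref{eq:brmod2} (at all pairs) implies \eqref{eq:dXYM} (at all pairs) with $d^M=e^M$; running the same computation in reverse, using \eqref{eq:dXYM} at $(Y,X)$ together with naturality with respect to $c_{Y,X}$, gives the converse. This shows the two axioms cut out the same subcategory, so the identity assignment is a well-defined mutually inverse pair of functors, proving the isomorphism. I would emphasize that the entire subtlety is that \eqref{eq:brmod2} and \eqref{eq:dXYM} look genuinely different at a fixed pair $(X,Y)$—the former built from $c_{Y,X}^{\pm1}$ and the latter from $c_{X,Y}^{\pm1}$—and coincide only after transposing the pair and invoking naturality of the reflection with respect to the braiding.
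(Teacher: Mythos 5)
Your proposal is correct and follows essentially the same route as the paper's proof: both identify the two categories via the tautological assignment $(M,e^M)\mapsto(M,d^M:=e^M)$ (with the same matching of morphisms), and both establish the equivalence of the coherence axioms by inserting the explicit constraints $n=m\,(c\act\ide)$, $b=m\,(c^{-1}\act\ide)\,m^{-1}$ of $\cM_{\textnormal{bim}}$ and combining \eqref{eq:brmod2} at the transposed pair $(Y,X)$ with naturality of $e^M$ applied to the braiding $c_{X,Y}$. The only difference is organizational: the paper computes from $e^M_{X\otimes Y}$ forward to the right-hand side of \eqref{eq:dXYM}, while you substitute into \eqref{eq:dXYM} and collapse it to an identity — the same computation read in the opposite direction.
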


\begin{proof}
Given an object $(M,e^M) \in \cE_{\cC}(\cM)$, we also get that $(M,d^M:=e^M) \in \cZ_{\cC}(\cM_{\textnormal{bim}})$ by setting   $\triangleleft:=\act$, $n:=m(c \triangleright \ide)$, $\rho:=\lambda$, $b:=m(c^{-1} \triangleright \ide)m^{-1}$ (via Lemma~\ref{lem:Mbimodule}). Indeed, \eqref{eq:dXYM} holds by the naturality of $e^M$ and by \eqref{eq:brmod2} as follows:
{\small
\begin{align*}
d_{X \otimes Y}^M &:= e_{X \otimes Y}^M \; = (c^{-1}_{X,Y} \act \ide_M) \; e_{Y \otimes X}^M \; (c_{X,Y} \act \ide_M)\\[.3pc]
&= (c^{-1}_{X,Y} \act \ide_M) \; m_{Y,X,M}^{-1} \; (\ide_Y \act e_X^M) \; m_{Y,X,M} \; (c_{X,Y} \act \ide_M)
\; m_{X,Y,M}^{-1}\\
& \quad  \circ (\ide_X \act e_Y^M) \; m_{X,Y,M}\; (c_{X,Y}^{-1} \act \ide_M)   \; (c_{X,Y} \act \ide_M)\\[.3pc]
& = n_{M,X,Y}^{-1} \; (d_{X}^{M}\triangleleft\ide_Y) \; b^{-1}_{X,M,Y} \; (\ide_X\act d_{Y}^{M}) \; m_{X,Y,M}.
\end{align*}
}

Conversely, given $(M,d^M) \in \cZ_{\cC}(\cM_{\textnormal{bim}})$, we obtain that $(M,e^M:=d^M)$ is in $\cE_{\cC}(\cM)$ by a similar argument. 
This identification of objects extends to an identification of morphisms in  $\cZ_{\cC}(\cM_{\textnormal{bim}})$ (see Definition~\ref{def:ZCM}) with morphisms in $\cE_{\cC}(\cM)$  (see Definition~\ref{def:ECM}(b)). Thus, we have an isomorphism of categories: $\cE_{\cC}(\cM) \cong \cZ_{\cC}(\cM_{\textnormal{bim}})$.
\end{proof}

\begin{corollary} \label{cor:ECMprops} We have the following statements about the reflective center $\cE_{\cC}(\cM)$,  for $\cC$ a braided  tensor category and $\cM$ a left $\cC$-module category.
\begin{enumerate}[\upshape(a)]
\item $\cE_{\cC}(\cM)$ is a $\cZ(\cC)$-module category. 

\smallskip
\item $\cE_{\cC}(\cM) \simeq \mathsf{Fun}_{\cC \boxtimes \cC^{\otimes \textnormal{op}}}(\cC, \cM_{\textnormal{bim}})$ as $\cZ(\cC)$-module categories.

\smallskip

\item $\cE_{\cC}(\cM)$ is abelian when $\cM$ is exact and finite.

\smallskip

\item $\cE_{\cC}(\cM)$ is finite when $\cC$ is finite and $\cM$ is exact and finite.

\smallskip

\item $\cE_{\cC}(\cM)$ is semisimple when $\cC$ and $\cM$ are finite and semisimple.

\end{enumerate}
\end{corollary}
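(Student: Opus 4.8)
The plan is to deduce all five statements from the identification $\cE_\cC(\cM)\cong\cZ_\cC(\cM_{\textnormal{bim}})$ of \Cref{prop:ECM-ZCM}, combined with a description of the center of a $\cC$-bimodule category as a category of bimodule functors out of the regular bimodule. The first and main step is to establish, for an arbitrary $\cC$-bimodule category $\cN$, an equivalence $\cZ_\cC(\cN)\simeq\mathsf{Fun}_{\cC\boxtimes\cC^{\otimes\textnormal{op}}}(\cC,\cN)$ that sends a bimodule functor $F\colon\cC\to\cN$ to $F(\one)$, equipped with the half-braiding obtained by comparing its left- and right-module structure isomorphisms at $\one$. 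The key observation is that a bimodule functor out of the regular bimodule $\cC$ is determined up to canonical isomorphism by $M:=F(\one)$: the left structure gives $F(X)\cong X\act M$ and the right structure gives $F(X)\cong M\triangleleft X$, and the composite of these two is a natural isomorphism $d^M_X\colon X\act M\isomorph M\triangleleft X$ whose coherence, dictated by the bimodule-functor axioms, is exactly the half-braiding condition \eqref{eq:dXYM}. Specializing $\cN=\cM_{\textnormal{bim}}$, which is a $\cC$-bimodule category by \Cref{lem:Mbimodule}(a), and composing with \Cref{prop:ECM-ZCM} produces the underlying equivalence of (b).

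For (a), I would apply the same description to $\cN=\cC$, giving $\cZ(\cC)=\cZ_\cC(\cC)\simeq\mathsf{Fun}_{\cC\boxtimes\cC^{\otimes\textnormal{op}}}(\cC,\cC)$ (cf.\ \Cref{ex:ZC=ZCC}); the latter is monoidal under composition of bimodule endofunctors of $\cC$. Composition of functors then makes $\mathsf{Fun}_{\cC\boxtimes\cC^{\otimes\textnormal{op}}}(\cC,\cM_{\textnormal{bim}})$ a module category over $\mathsf{Fun}_{\cC\boxtimes\cC^{\otimes\textnormal{op}}}(\cC,\cC)\simeq\cZ(\cC)$, with associativity and unit constraints inherited from those of functor composition. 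Transporting this action across the equivalence from (b) endows $\cE_\cC(\cM)$ with a $\cZ(\cC)$-module category structure, proving (a); moreover, by construction the equivalence of (b) respects these actions, which simultaneously upgrades it to the stated equivalence of $\cZ(\cC)$-module categories.

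Parts (c)--(e) then follow formally from the functor-category description together with the results of Section~\ref{sec:back-M}. Write $D:=\cC\boxtimes\cC^{\otimes\textnormal{op}}$; since exactness of $\cM$ is defined relative to a finite tensor category, $\cC$ is finite, and hence $D$ is a finite tensor category (fusion if $\cC$ is, in addition, semisimple). The regular bimodule $\cC$ is an exact and finite left $D$-module category by \Cref{ex:Cexact}, and $\cM_{\textnormal{bim}}$ is exact and finite over $D$ by \Cref{lem:Mbimodule}(b). Because the source $\cC$ is exact and finite, \Cref{prop:FunCRexC} gives $\mathsf{Fun}_D(\cC,\cM_{\textnormal{bim}})=\mathsf{Ex}_D(\cC,\cM_{\textnormal{bim}})=\mathsf{Rex}_D(\cC,\cM_{\textnormal{bim}})$, and then \Cref{prop:FunCabelfin} shows this category is abelian, giving (c), and finite, giving (d). For (e), when $\cC$ and $\cM$ are finite and semisimple, $D$ is fusion and both $\cC$ and $\cM_{\textnormal{bim}}$ are finite semisimple $D$-module categories, so \Cref{prop:FunCss} shows $\mathsf{Fun}_D(\cC,\cM_{\textnormal{bim}})$ is semisimple.

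The main obstacle I expect is the first step: setting up the equivalence $\cZ_\cC(\cN)\simeq\mathsf{Fun}_{\cC\boxtimes\cC^{\otimes\textnormal{op}}}(\cC,\cN)$ with full care, and verifying that the bimodule-functor coherence axioms correspond precisely to \eqref{eq:dXYM} under the explicit constraints $n=m(c\triangleright\ide)$ and $b=m(c^{-1}\triangleright\ide)m^{-1}$ defining $\cM_{\textnormal{bim}}$, together with the naturality and functoriality needed to make this an equivalence of categories. Once this dictionary is in place, the module-structure compatibility in (a) and the homological conclusions (c)--(e) are routine applications of the cited propositions.
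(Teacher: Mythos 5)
Your proposal is correct and follows essentially the same route as the paper: reduce to $\cZ_\cC(\cM_{\textnormal{bim}})$ via Proposition~\ref{prop:ECM-ZCM}, identify this center with $\mathsf{Fun}_{\cC \boxtimes \cC^{\otimes \textnormal{op}}}(\cC, \cM_{\textnormal{bim}})$, and deduce (c)--(e) from Example~\ref{ex:Cexact}, Lemma~\ref{lem:Mbimodule}(b), and Propositions~\ref{prop:FunCabelfin}, \ref{prop:FunCRexC}, \ref{prop:FunCss}. The only difference is one of self-containedness: where you re-derive the identification of the bimodule center with a functor category (evaluation at $\one$) and the $\cZ(\cC)$-action (composition of bimodule endofunctors of $\cC$) by hand, the paper simply cites \cite{Greenough}*{Proposition~7.10} and \cite{Greenough}*{Lemma~7.8}, respectively, and these citations encapsulate exactly the verifications you flag as the main obstacle.
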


\begin{proof}
It follows from Proposition~\ref{prop:ECM-ZCM} that it suffices to establish the statements for  $\cZ_{\cC}(\cM_{\textnormal{bim}})$. Part~(a) then holds by \cite{Greenough}*{Lemma~7.8}. We can then apply Proposition~\ref{prop:ECM-ZCM} to obtain the action of $\cZ(\cC)$ on $\cE_\cC(\cM)$ below:
{\small
\begin{align*}
\widetilde{\act}: \cZ(\cC) \;\times \;\cE_{\cC}(\cM) &\longrightarrow \cE_{\cC}(\cM) \\
\left( (V, c^V), \; (M, e^M) \right) & \mapsto 
\left( V \otimes M, e^{V \act M}\right),
\end{align*}
}
where,  for any $X \in \cC$, we have
{\small
\begin{align*}
 e^{V \act M}_X &:= 
b_{V,M,X}^{-1} \; (\ide_V \act e_X^M) \; m_{V,X,M} \;(c_X^V \act \ide_M) \; m_{X,V,M}^{-1}\\[.2pc]
&= 
m_{X,V,M} \; (c_{V,X} \act \ide_M)\; m^{-1}_{V,X,M} \;(\ide_V \act e_X^M) \; m_{V,X,M} \; (c_X^V \act \ide_M) \;m_{X,V,M}^{-1}.
\end{align*}
}

For part (b), note that by  \cite{Greenough}*{Proposition~7.10}, $\cZ_{\cC}(\cM_{\textnormal{bim}})$ is isomorphic to $\mathsf{Rex}_{\cC \boxtimes \cC^{\otimes^\textnormal{op}}}(\cC, \cM_{\textnormal{bim}})$. We have that $\cC$ is an exact module category over $\cC \boxtimes \cC^{\otimes^\textnormal{op}}$ by Example~\ref{ex:Cexact}. Now by Proposition~\ref{prop:FunCRexC}, we have that $\mathsf{Rex}_{\cC \boxtimes \cC^{\otimes^\textnormal{op}}}(\cC, \cM_{\textnormal{bim}}) =  \mathsf{Fun}_{\cC \boxtimes \cC^{\otimes^\textnormal{op}}}(\cC, \cM_{\textnormal{bim}})$. So, the result holds.

By part~(b), it suffices to establish parts (c,d,e) for   $\mathsf{Fun}_{\cC \boxtimes \cC^{\otimes^\textnormal{op}}}(\cC, \cM_{\textnormal{bim}})$. Parts~(c,d) then follow from Example~\ref{ex:Cexact}, Lemma~\ref{lem:Mbimodule}(b), Propositions~\ref{prop:FunCabelfin},~\ref{prop:FunCRexC}. Part~(e) follows from Proposition~\ref{prop:FunCss}.
\end{proof}

We note that the $\cZ(\cC)$-module structure  $\widetilde{\act}$ on $\cE_\cC(\cM)$ in \Cref{cor:ECMprops}(a) does not coincide with the one obtained by restricting the (braided) $\cC$-module structure $\act$ of \Cref{lem:ECM-modcat} and \Cref{prop:ECM-brmodcat} along the tensor functor $\cZ(\cC)\to \cC$. However, the $\cC$-action $\act$ can be recovered from $\widetilde{\act}$ by restriction along the tensor functor
$\cC\to \cZ(\cC), \, V\mapsto (V,c^V),$
where $c^V_{X} := c_{X,V}$, for all $X\in\cC$.


\section{Reflective algebras of comodule algebras} \label{sec:rep-refl-cent}
 
In this section, we consider the case when $\cC$ is the braided monoidal  category $\lmod{H}$ for $H$ a quasitriangular Hopf algebra over $\Bbbk$, and $\cM$ is the left $\cC$-module category $\lmod{A}$, for a left $H$-comodule $A$ over $\Bbbk$. (Note that every indecomposable, exact $\cC$-module category is of this form in the finite tensor case \cite{AndruskiewitschMombelli}*{Proposition~1.19}, i.e., when restricting to finite-dimensional modules over finite-dimensional $A$ and $H$.) The module associativity isomorphism $m$ given by the (trivial) associativity isomorphism of $\Vect$. 
The goal of this section is to describe an $H$-comodule algebra $R_H(A)$ that represents the reflective center $\cE_\cC(\cM)$, that is, to get:
\[
\cE_{\lmod{H}}(\lmod{A}) \; \cong \;  \lmod{R_H(A)} \quad \text{as categories}.
\]
The notation for the section and for the rest of the paper is summarized in Section~\ref{sec:setting-repr}. An intermediate category of Doi--Hopf modules is introduced in Section~\ref{sec:equivalence} towards achieving the isomorphism above. With this, we define $R_H(A)$ and establish the desired isomorphism in Section~\ref{sec:equivalence2}. Properties of $R_H(A)$ are examined in Section~\ref{sec:prop-ref}.

\subsection{Standing notation and hypotheses for the Hopf setting}
\label{sec:setting-repr}
We collect for the reader notation and setting that we will use from now on. We use (sumless) Sweedler notation throughout. 
\begin{itemize}
\item $\otimes$ will denote the tensor product ($\otimes_{\Bbbk}$),  monoidal product ($\otimes$), and $\cC$-action bifunctor ($\act$) above from now on as they are all equal $\otimes_{\Bbbk}$.
\medskip
\item $H:= (H,m,u,\Delta, \varepsilon,S)$ is a quasitriangular  Hopf algebra over $\Bbbk$.
\medskip
\item $\Delta(h) = h_{(1)} \otimes h_{(2)}$ is the coproduct of $H$. Its composition is denoted by 
\[(\Delta \otimes \ide_H) \Delta(h) = (\ide_H \otimes \Delta) \Delta (h) =: h_{(1)} \otimes h_{(2)} \otimes h_{(3)}.\]
\item If $H$ is finite-dimensional, then $\langle \hspace{0.02in}, \rangle$ is the Hopf pairing between $H^*$ and $H$. That is,
\begin{equation} \label{eq:Hopfpair}
\langle \xi \zeta, h  \rangle = \langle \xi \otimes \zeta, \Delta(h) \rangle = \langle \xi, h_{(1)} \rangle  \langle \zeta, h_{(2)} \rangle, \quad \langle \xi, h \ell  \rangle = \langle \Delta(\xi), h \otimes  \ell \rangle = \langle \xi_{(1)}, h \rangle \langle \xi_{(2)},  \ell \rangle,
\end{equation}
for $\xi, \zeta \in H^*$, $h, \ell \in H$. Here, $\xi_{(1)} \otimes \xi_{(2)}$ denotes the coproduct of $\xi \in H^*$.
\medskip
\item If $H$ is finite-dimensional, then we denote the dual basis of $H$ by $\{h_d, \xi_d\}_d$, for $h_d \in H$ and $\xi_d \in H^*$. Namely, we get:
\begin{equation} \label{eq:dual-iden}
h = \textstyle \sum_d \langle \xi_d, h \rangle h_d, \quad \quad \quad \quad \xi = \sum_d \langle \xi, h_d \rangle \xi_d.
\end{equation}
\item If $H$ is finite-dimensional, the standard left and right actions of $H$ on $H^*$ are denoted by $\twoheadrightarrow$ and $\twoheadleftarrow$, respectively. That is, for $h, h' \in H$, $\xi \in H^*$:
\begin{equation} \label{eq:twohead}
\begin{array}{c}
h \twoheadrightarrow \xi=  \langle \xi_{(2)}, h \rangle \xi_{(1)} \quad \text{with } \langle h \twoheadrightarrow \xi, h' \rangle := \langle \xi, h' h \rangle,\\[.3pc]
\xi \twoheadleftarrow h :=  \langle \xi_{(1)}, h \rangle \xi_{(2)}\quad \text{with } \langle \xi \twoheadleftarrow h, h' \rangle := \langle \xi, h h' \rangle.
\end{array}
\end{equation}

\medskip
\item $R:=\sum_i s_i \otimes t_i \; \in H \otimes H$ is the $R$-matrix of $H$.
\medskip
\item $R^{-1}:=\sum_i s^i \otimes t^i \; \in H \otimes H$ is the inverse of the $R$-matrix of $H$.
\medskip
\item $A$ is a left $H$-comodule algebra over $\Bbbk$. 
\medskip
\item $\delta: A \to H \otimes A$, $a \mapsto a_{[-1]} \otimes a_{[0]}$, is the left $H$-coaction of $A$. Its composition is denoted by \[(\Delta \otimes \ide_H) \delta(a) = (\ide_H \otimes \delta) \delta (a) =: a_{[-2]} \otimes a_{[-1]} \otimes a_{[0]}.\] 
\item  $K:=\sum_i g_i \otimes p_i \; \in H \otimes A$ is the $K$-matrix of $A$ when $A$ is quasitriangular.
\medskip
\item  $K^{-1}:=\sum_i g^i \otimes p^i \; \in H \otimes A$ is the inverse of the $K$-matrix of $A$ when $A$ is quasitriangular.
\medskip
\item $\cC$ is the braided monoidal category $\lmod{H}$ over $\Bbbk$, with monoidal product $\otimes$, unit  object~$\Bbbk$, and braiding $c$. The $H$-action for objects of $\cC$ is denoted by a centered dot, $\cdot$.
\medskip
\item $c_{X,Y}:X \otimes Y \overset{\sim}{\to}  Y \otimes X, \; x \otimes y \mapsto \sum_i (t_i \cdot y) \otimes (s_i \cdot x)$, is the braiding of $\cC$ via the $R$-matrix of~$H$,  for $X,Y \in \cC$.
\medskip
\item $c^{-1}_{Y,X}: X \otimes Y \overset{\sim}{\to}  Y \otimes X, \; x \otimes y \mapsto \sum_i (s^i \cdot y) \otimes (t^i \cdot x)$,  is the inverse braiding of $\cC$, via the inverse $R$-matrix of~$H$, for $X,Y \in \cC$.

\medskip
\item $\cM$ is the  left $\cC$-module category $\lmod{A}$ over $\Bbbk$. The $A$-action for objects of $\cM$ is denoted by an asterisk, $\ast$, or by $\widetilde{\ast}$ if the action is induced.
\medskip

\item $e^M$ is the braiding of $\cM = \lmod{A}$ for $M \in \cM$.

\medskip

\item $e^M_X(x \otimes m) := \sum_i (g_i \cdot x) \otimes (p_i \ast m)$, for $X \in \cC$ and $\sum_i g_i \otimes  p_i \in H \otimes A$.

\medskip

\item When $H$ (resp., $A$) is in the subscript of $c$ or $e$,  this indicates  the regular left $H$-module (resp., $A$-module).
\end{itemize}

\subsection{Reflective centers as Doi--Hopf modules}
\label{sec:equivalence}
Here, we will consider a category of left Doi--Hopf modules, ${}^{\widehat{H}}_A \mathsf{DH}(H)$, consisting of vector spaces which are modules over the left $H$-comodule algebra $A$ and comodules over a left $H$-module coalgebra $\widehat{H}$, which is a version of Majid's covariantized (or transmuted) coalgebra \cites{Majid1991,Majid}. Our main goal is to show that 
\begin{equation} \label{eq:ECM-DH}
\cE_\cC(\cM) \; \cong\;  {}^{\widehat{H}}_A  \mathsf{DH}(H)\quad  \text{as categories.}
\end{equation}

We recall the category of Doi--Hopf modules in Section~\ref{sec:Doi--Hopf}; we then get ${}^{\widehat{H}}_A  \mathsf{DH}(H)$ after we define $\widehat{H}$ in Section~\ref{sec:hatH}. Next, we construct a  functor $F: \cE_\cC(\cM) \to   {}^{\widehat{H}}_A   \mathsf{DH}(H)$ in Section~\ref{sec:ECMtoDH},  and a  functor $G: {}^{\widehat{H}}_A   \mathsf{DH}(H) \to  \cE_\cC(\cM) $ in Section~\ref{sec:DHtoECM}. Then, we establish~\eqref{eq:ECM-DH} in Section~\ref{sec:ECM-DH-isocat}.

\subsubsection{The category of Doi--Hopf modules} \label{sec:Doi--Hopf}
First, let us recall the notion of a Doi--Hopf module from work of Doi \cite{Doi1992}. Note that we abuse some of the notation of Section~\ref{sec:setting-repr} below.

\begin{definition} \cite{Doi1992}*{Remark~1.3} 
Consider the following input data:
\begin{itemize}
    \item $L$, a Hopf algebra;
    \smallskip
    \item $B$, a left $L$-comodule algebra with coaction given by $\delta: B \to L \otimes B, \; b \mapsto b_{[-1]} \otimes b_{[0]}$;
    \smallskip
    \item $C$, a left $L$-module coalgebra with action given by $\rightharpoonup: L \otimes C \to C$.
\end{itemize}
A vector space $M$ is a {\it left $(L,B,C)$-Doi--Hopf module} is if the following conditions hold:
\begin{enumerate}[(i)]
    \item $M$ is a left $B$-module with action given by $\ast: B \otimes M \to M$;
    \smallskip
    \item $M$ is a left $C$-comodule with coaction given by $\varphi: M \to C \otimes M, \; m \mapsto m_{-1} \otimes m_{0}$;
\end{enumerate}
action and coaction are subject to the following compatibility condition,
\begin{equation} \label{eq:Doi-Hopf}
(b \ast m)_{-1}  \otimes 
(b \ast m)_{0}
\; = \;
(b_{[-1]} \rightharpoonup m_{-1}) \otimes (b_{[0]} \ast m_0),
\end{equation}
for all $m \in M$ and $b \in B$.
\end{definition}

The collection of left $(L,B,C)$-Doi--Hopf modules forms a category. Here, a morphism between two left $(L,B,C)$-Doi--Hopf modules is a map that is simultaneously a left $B$-module map and a left $C$-comodule map. We denote this category by 
\[{}^C_B  \mathsf{DH}(L).\]

In the appendix of the ArXiv version 1 of this article it is shown that the category of left $(L,B,C)$-Doi--Hopf modules admits a canonical structure of a left module category over the braided monoidal category $\lmod{L}$ when $L$ is quasitriangular, with $R$-matrix $\sum_i s_i \otimes t_i$.

\subsubsection{The left $H$-module coalgebra $\widehat{H}$} 
\label{sec:hatH}
Let us define the $H$-module coalgebra $\widehat{H}$ mentioned above in~\eqref{eq:ECM-DH} which is a version of Majid's transmuted (or covariantized) coalgebra; see \Cref{rem:Majid}.

\begin{definition}\label{def:hatH}
Take $\widehat{H}$ to be equal to $H$ as vector spaces, and consider the following comultiplication, counit, and left $H$-action formulae:
\begin{align*}
\widehat{\Delta}(h) &:= \; 
\textstyle \sum_{i,j} t_j  h_{(1)}  t_i  \;\otimes\;  h_{(2)}  s_i  S^{-1}(s_j), \\
\widehat{\varepsilon}(h) &:= \; \varepsilon(h),\\
\ell \rightharpoonup h &:= \; \ell_{(2)}  h  S^{-1}(\ell_{(1)}),
\end{align*}

\noindent for all $h \in \widehat{H}$ and $\ell \in H$. 
\end{definition}

The operations from \Cref{def:hatH} make $\widehat{H}$ a left $H$-module coalgebra. This follows as in \cite{Majid}*{Theorem~7.4.2}.

\begin{remark} \label{rem:Majid}.
The precise comparison to the conventions of \cite{Majid} is as follows. For a quasitriangular Hopf algebra $H:=(H, R:=\sum_i s_i \otimes t_i)$, we have that its coopposite Hopf algebra $H^{\textnormal{cop}}$ is a quasitriangular Hopf algebra with $R^{\textnormal{cop}}:=\sum_i t_i \otimes s_i$.
With this, and by inspecting the proof of \cite{Majid1991}*{Theorem~3.1}, one can see that $(\widehat{H})^{\textnormal{cop}} = (H^{\textnormal{cop}})_{\textnormal{trm}}$, where $H_{\textnormal{trm}}$ denotes the coalgebra obtained by {\it transmutation} in \cite{Majid}*{Theorem~7.4.2}. 
\end{remark}

\subsubsection{Functor from  the reflective center to a category of Doi--Hopf modules} \label{sec:ECMtoDH}

 Consider the following preliminary result.

\begin{lemma} \label{lem:Lambda}
Let $r_h$ be right multiplication by $h \in H$. Then, the operator  
\begin{align*}
\Lambda: \Hom_\Bbbk(A \otimes M, \widehat{H} \otimes M) &\to \Hom_\Bbbk(A \otimes M, \widehat{H} \otimes M)\\
\psi &\mapsto  [a \otimes m \;\mapsto \; ((r_{a_{[-1]}} \otimes \ide_M) \circ\psi)(a_{[0]} \otimes m)] 
\end{align*}

\vspace{-.05in}

\noindent is invertible, with inverse
\begin{align*}
\Lambda^{-1}: \Hom_\Bbbk(A \otimes M, \widehat{H} \otimes M) &\to \Hom_\Bbbk(A \otimes M, \widehat{H} \otimes M)\\
\psi &\mapsto  [a \otimes m \;\mapsto \; ((r_{S^{-1}(a_{[-1]})} \otimes \ide_M) \circ\psi)(a_{[0]} \otimes m)]. 
\end{align*}

 \vspace{-.27in}

 \qed
\end{lemma}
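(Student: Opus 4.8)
The plan is to prove invertibility by direct verification that the stated $\Lambda^{-1}$ is a genuine two-sided inverse, i.e.\ that $\Lambda \circ \Lambda^{-1} = \ide$ and $\Lambda^{-1} \circ \Lambda = \ide$ on $\Hom_\Bbbk(A \otimes M, \widehat{H} \otimes M)$. Both operators are well defined since $H$ is quasitriangular, hence has bijective antipode, so $S^{-1}$ is available. The entire statement then reduces to a short Sweedler-calculus computation, using two facts: first, that right multiplications compose contravariantly, $r_x \circ r_y = r_{yx}$ for $x,y \in H$ acting on $\widehat{H} = H$; and second, left-comodule coassociativity, which lets me write the iterated coaction as $a_{[-2]} \otimes a_{[-1]} \otimes a_{[0]}$ with $a_{[-2]} \otimes a_{[-1]} = \Delta(a_{[-1]})$ in single-coaction notation, following the conventions of Section~\ref{sec:setting-repr}.

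For $\Lambda \circ \Lambda^{-1}$, I would unravel the two definitions and apply the inner coaction in the $a_{[0]}$-slot, obtaining, for $\psi \in \Hom_\Bbbk(A \otimes M, \widehat{H} \otimes M)$,
\[
(\Lambda(\Lambda^{-1}\psi))(a \otimes m) = (r_{a_{[-2]}} \otimes \ide_M)\big( (r_{S^{-1}(a_{[-1]})} \otimes \ide_M)(\psi(a_{[0]} \otimes m)) \big).
\]
By contravariant composition of right multiplications, this right-multiplies the $\widehat{H}$-slot by $S^{-1}(a_{[-1]})\,a_{[-2]}$. Writing $a_{[-2]} \otimes a_{[-1]} = \Delta(a_{[-1]})$ and invoking the antipode identity $S^{-1}(h_{(2)})\,h_{(1)} = \varepsilon(h)\,1_H$ (valid for $S^{-1}$, it being the antipode of $H^{\cop}$), this scalar factor collapses to $\varepsilon(a_{[-1]})\,1_H$; the comodule counit axiom $\varepsilon(a_{[-1]})\,a_{[0]} = a$ then recovers $\psi(a \otimes m)$. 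For the reverse composite $\Lambda^{-1}\circ \Lambda$, the identical unravelling yields right multiplication of the $\widehat{H}$-slot by $a_{[-1]}\,S^{-1}(a_{[-2]})$, and the companion identity $h_{(2)}\,S^{-1}(h_{(1)}) = \varepsilon(h)\,1_H$ together with the counit axiom again returns $\psi$, completing the proof.

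There is no deep obstacle here; the computation is routine once the tools are in place. The one genuinely error-prone step—and the only thing I would be careful about—is keeping the Sweedler bookkeeping of the iterated coaction consistent with the order in which the two right-multiplication operators are applied, and in particular remembering that $r_x \circ r_y = r_{yx}$ rather than $r_{xy}$, so that the antipode identity fires with the indices in the correct position. Once that contravariance is pinned down, both $\Lambda \circ \Lambda^{-1} = \ide$ and $\Lambda^{-1} \circ \Lambda = \ide$ follow immediately.
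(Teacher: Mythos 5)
Your proof is correct and is exactly the routine verification the paper leaves implicit (the lemma is stated with a \qed and no written proof). The Sweedler bookkeeping with the convention $a_{[-2]}\otimes a_{[-1]}\otimes a_{[0]}$, the contravariance $r_x\circ r_y=r_{yx}$, and the two antipode identities $S^{-1}(h_{(2)})h_{(1)}=\varepsilon(h)1_H=h_{(2)}S^{-1}(h_{(1)})$ are all applied correctly, so both composites collapse to the identity as claimed.
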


Next, we turn our attention to the desired functor for this section.

\begin{proposition} \label{prop:functorF}
We have a functor
\begin{align*}
F: \cE_{\lmod{H}}(\lmod{A}) &\to {}_A^{\widehat{H}} \mathsf{DH}(H)\\
(M, \; \ast, \; e^M) &\mapsto (M, \; \ast, \; \varphi:=\varphi_{e^M}: M \to \widehat{H} \otimes M),
\end{align*}

\vspace{-.05in}

\noindent where $\varphi(m) = e_{H}^M(1_H \otimes m)=: m_{-1} \otimes m_0$.
\end{proposition}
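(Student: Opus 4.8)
The plan is to reduce everything to one reconstruction principle. Since the reflection $e^{M}=\{e^{M}_{Y}\}_{Y\in\cC}$ is natural in $Y\in\cC$, and for $y$ in an $H$-module $Y$ the assignment $h\mapsto h\cdot y$ is a morphism of left $H$-modules from the regular module $H$ to $Y$, naturality evaluated at $1_{H}$ gives the formula $e^{M}_{Y}(y\otimes m)=(m_{-1}\cdot y)\otimes m_{0}$, where $\varphi(m)=e^{M}_{H}(1_{H}\otimes m)=m_{-1}\otimes m_{0}$. I would record this first, since every later step is a computation with it. In particular $\varphi$ is visibly natural and well defined, so it remains to check counitality, coassociativity over $\widehat{H}$, the Doi--Hopf compatibility \eqref{eq:Doi-Hopf}, and functoriality.

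For counitality I would specialize \eqref{eq:brmod2} to $X=\one=\Bbbk$. Because the braiding of $\lmod{H}$ with the unit object is trivial by \eqref{eq:QT4}, the axiom collapses to $e^{M}_{Y}=e^{M}_{Y}\circ(\ide_{Y}\otimes e^{M}_{\one})$, forcing $e^{M}_{\one}=\ide$. Applying $\widehat{\varepsilon}=\varepsilon$ to the first factor of $\varphi(m)$ and using naturality against the $H$-module map $\varepsilon\colon H\to\Bbbk$ then reduces $(\widehat{\varepsilon}\otimes\ide_{M})\varphi(m)$ to $e^{M}_{\one}(1\otimes m)=m$.

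The heart of the argument, and the step I expect to be the main obstacle, is coassociativity. Here I would compute $e^{M}_{H\otimes H}(1_{H}\otimes 1_{H}\otimes m)$ in two ways. The reconstruction formula for the diagonal module $H\otimes H$ gives $\Delta(m_{-1})\otimes m_{0}$, while expanding the same morphism through \eqref{eq:brmod2} and Lemma~\ref{lem:R-c} gives $\sum_{i,j}t_{j}m_{-1}t^{i}\otimes(m_{0})_{-1}s_{j}s^{i}\otimes(m_{0})_{0}$. Equating them yields a master identity expressing $(\Delta\otimes\ide_{M})\varphi$ in terms of $(\ide\otimes\varphi)\varphi$ twisted by $R$-matrices. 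To replace $\Delta$ by the transmuted coproduct $\widehat{\Delta}$ of \Cref{def:hatH}, I would apply the operator $T(a\otimes b)=\sum_{k,l}t_{l}\,a\,t_{k}\otimes b\,s_{k}S^{-1}(s_{l})$, which satisfies $\widehat{\Delta}=T\circ\Delta$, to the master identity, and then show the accumulated $R$-matrix coefficients collapse to the identity via two bookkeeping facts: $\sum_{i,k}t^{i}t_{k}\otimes s^{i}s_{k}=R_{21}^{-1}R_{21}=1_{H}\otimes 1_{H}$, and $\sum_{j,l}t_{l}t_{j}\otimes s_{j}S^{-1}(s_{l})=1_{H}\otimes 1_{H}$. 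The second is the only genuinely nonobvious identity; I would prove it by applying $\ide\otimes S$, which turns its left-hand side into $R_{21}\,(\ide\otimes S)(R_{21})=R_{21}R_{21}^{-1}=1_{H}\otimes 1_{H}$ by \eqref{eq:QT5}, and then invoking bijectivity of $S$. After both collapses the twisted identity reads exactly $(\widehat{\Delta}\otimes\ide_{M})\varphi=(\ide_{\widehat{H}}\otimes\varphi)\varphi$.

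For the Doi--Hopf compatibility I would use that each $e^{M}_{Y}$ is a morphism in $\cM=\lmod{A}$, so $e^{M}_{H}$ is $A$-linear for the induced action $a\,\widetilde{\ast}\,(h\otimes m)=(a_{[-1]}h)\otimes(a_{[0]}\ast m)$. Applying $A$-linearity of $e^{M}_{H}$ to $1_{H}\otimes m$ and rewriting the left-hand side via naturality against right multiplication $r_{a_{[-1]}}$ yields $(r_{a_{[-1]}}\otimes\ide_{M})\varphi(a_{[0]}\ast m)=a_{[-1]}m_{-1}\otimes(a_{[0]}\ast m_{0})$. This is precisely the statement that $\Lambda$ of Lemma~\ref{lem:Lambda}, applied to $a\otimes m\mapsto\varphi(a\ast m)$, equals $a\otimes m\mapsto a_{[-1]}m_{-1}\otimes(a_{[0]}\ast m_{0})$; inverting with $\Lambda^{-1}$ and simplifying the iterated coaction by comodule coassociativity gives $\varphi(a\ast m)=(a_{[-1]})_{(2)}\,m_{-1}\,S^{-1}\big((a_{[-1]})_{(1)}\big)\otimes(a_{[0]}\ast m_{0})$, whose first factor is $a_{[-1]}\rightharpoonup m_{-1}$ for the $H$-action on $\widehat{H}$ of \Cref{def:hatH}; this is \eqref{eq:Doi-Hopf}. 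Finally, functoriality is immediate: a morphism $f$ in $\cE_{\cC}(\cM)$ is an $A$-module map with $(\ide_{H}\otimes f)e^{M}_{H}=e^{N}_{H}(\ide_{H}\otimes f)$, which evaluated at $1_{H}\otimes m$ says exactly that $f$ intertwines $\varphi^{M}$ and $\varphi^{N}$, so $f$ is a morphism of Doi--Hopf modules.
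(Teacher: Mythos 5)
Your proof is correct and follows essentially the same route as the paper's: the reconstruction formula $e^M_Y(y\otimes m)=(m_{-1}\cdot y)\otimes m_0$ via naturality, the two-way computation of $e^M_{H\otimes H}(1_H\otimes 1_H\otimes m)$ combined with the twist operator $\omega$ (your $T$) and the two $R$-matrix collapse identities for coassociativity over $\widehat{H}$, and $A$-linearity of $e^M_H$ together with the operator $\Lambda^{-1}$ of Lemma~\ref{lem:Lambda} for the Doi--Hopf compatibility \eqref{eq:Doi-Hopf}. The only differences are cosmetic: you also verify counitality (via $e^M_{\one}=\ide$) and functoriality on morphisms, both of which the paper leaves implicit, and you prove the identity $\sum_{j,l}t_lt_j\otimes s_jS^{-1}(s_l)=1_H\otimes 1_H$ in detail where the paper simply cites \eqref{eq:QT5}.
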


\begin{proof}
It suffices to establish the following statements:
\begin{itemize}[(i)]
    \item[(i)] $(M ,\varphi) \in \lcomod{\widehat{H}}$ \; (this will follow from \eqref{eq:brmod2}), and 
    \smallskip 
    \item[(ii)] $(M,\ast, \varphi)$ satisfies \eqref{eq:Doi-Hopf} \; (this will follow from $e_X^M \in \lmod{A}$ for any $X \in \lmod{H}$).  
\end{itemize}

Towards (i), for $h, \ell \in H$ (as the regular left $H$-module), note that
\begin{equation} \label{eq:cHH}
c_{H,H}(h \otimes \ell) = \textstyle \sum_i t_i \ell \otimes s_i h
\quad \quad \text{and} \quad \quad 
c^{-1}_{H,H}(h \otimes \ell) = \textstyle \sum_i s^i \ell \otimes t^i h.
\end{equation}
Moreover by taking $r_h: H \to H$ to be right multiplication by $h$, we get that $r_h \in \lmod{H}$. Thus, by the naturality of $e^M$, we obtain that 
\begin{equation} \label{eq:eHM}
e_{H}^M(h \otimes m) \; = \; 
e_{H}^M(r_h \otimes \ide_M)(1_H \otimes m) \;  = \;
(r_h \otimes \ide_M)e_{H}^M(1_H \otimes m)
\; = \; m_{-1}h \otimes m_0.
\end{equation}
Moreover, note that $\widehat{\Delta} = \omega \circ \Delta$ for 
\begin{equation} \label{eq:omega}
\omega(h \otimes h') := \textstyle \sum_{k,l} t_l  h  t_k \; \otimes \;  h'  s_k  S^{-1}(s_l).
\end{equation}
In fact, as an aside, we have that $\omega$ is invertible with 
\begin{equation} \label{eq:omega-inv}
\omega^{-1}(h \otimes h') := \textstyle \sum_{i,j}  \; t^i  h   t^j \; \otimes \;  h' S^{-1}(s^i)  s^j.
\end{equation}

Now (i) holds by the following computation:
{\small
\begin{align*}
(\widehat{\Delta} \otimes \ide_M)\varphi(m) 
& \; = \; 
(\omega \otimes \ide_M)(\Delta \otimes \ide_M)e_H^M(1_H \otimes m) \\
& \; \overset{e^M \text{nat'l},\  \Delta \in \lmod{H}}{=} \; 
(\omega \otimes \ide_M)e_{H\otimes H}^M(\Delta \otimes \ide_M)(1_H \otimes m)\\[.2pc]
& \; = \; 
(\omega \otimes \ide_M)e_{H\otimes H}^M(1_H \otimes 1_H \otimes m)\\
& \; \overset{\textnormal{\eqref{eq:brmod2}}}{=} \; 
(\omega \otimes \ide_M)(\ide_H \otimes e_H^M)(c_{H,H} \otimes \ide_M)(\ide_H \otimes e_H^M)(c_{H,H}^{-1} \otimes \ide_M)(1_H \otimes 1_H \otimes m)\\
& \; \overset{\textnormal{\eqref{eq:cHH}}}{=} \; 
\textstyle \sum_i(\omega \otimes \ide_M)(\ide_H \otimes e_H^M)(c_{H,H} \otimes \ide_M)(\ide_H \otimes e_H^M)(s^i \otimes t^i \otimes m)\\
& \; \overset{\textnormal{\eqref{eq:eHM}}}{=} \; 
\textstyle \sum_i(\omega \otimes \ide_M)(\ide_H \otimes e_H^M)(c_{H,H} \otimes \ide_M)(s^i \otimes m_{-1}t^i \otimes m_0)\\
& \; \overset{\textnormal{\eqref{eq:cHH}}}{=} \; 
\textstyle \sum_i(\omega \otimes \ide_M)(\ide_H \otimes e_H^M)(t_jm_{-1}t^i \otimes s_js^i  \otimes m_0)
\\
& \; \overset{\textnormal{\eqref{eq:eHM}}}{=} \; 
\textstyle \sum_{i,j}(\omega \otimes \ide_M)(t_jm_{-2}t^i \otimes m_{-1}s_js^i  \otimes m_0)
\\[.2pc]
& \; = \; 
\textstyle \sum_{i,j,k,l} t_l t_j m_{-2} t^i t_k \;  \otimes \; m_{-1} s_j s^i s_k S^{-1}(s_l) \;  \otimes \; m_0\\[.2pc]
& \; = \; 
\textstyle \sum_{j,l} t_l t_j m_{-2}  \;  \otimes \; m_{-1} s_j  S^{-1}(s_l) \;  \otimes \; m_0\\[.2pc]
& \;\overset{\textnormal{\eqref{eq:QT5}}}{=} \; 
  m_{-2}  \;  \otimes \; m_{-1} \;  \otimes \; m_0
\\[.2pc]
& \;  = \; 
(\ide \otimes \varphi)\varphi(m).
\end{align*}
}

 To prove (ii), note that $e_{H}^M \in \lmod{A}$ ($\dag$). So, for $a \in A$ and $m \in M$, we get that:
\begin{align} 
 \begin{split}
\hbox{{\small $a_{[-1]}m_{-1} \; \otimes  \; (a_{[0]} \ast m_0)$}}
&\; \hbox{{\small $= \; 
a \ast (m_{-1} \otimes m_0)
\quad \overset{\textnormal{\eqref{eq:eHM}}}{=} \; 
a \ast e_{H}^M( 1_H \otimes m)$}}\\
&
\hbox{{\small $\; \overset{(\dag)}{=} \; 
e_{H}^M(a \ast ( 1_H \otimes m))
\; = \; 
e_{H}^M(a_{[-1]} \otimes (a_{[0]} \ast m))$}}\\
&
\hbox{{\small $\; \overset{\textnormal{\eqref{eq:eHM}}}{=} \; 
(a_{[0]} \ast m)_{-1} a_{[-1]}\; \otimes\;  (a_{[0]} \ast m)_{0}.$}}
\label{eq:Fii}
\end{split}
\end{align}

\noindent Now the following computation verifies (ii): 
{\small
\begin{align*}
(a_{[-1]} \rightharpoonup m_{-1}) \; \otimes \; (a_{[0]} \ast m_0) 
& \; \overset{\textnormal{Def.\;\ref{def:hatH}}}{=} \; 
(a_{[-1]})_{(2)} \;m_{-1}\; S^{-1}((a_{[-1]})_{(1)})\; \otimes \; (a_{[0]} \ast m_0) \\
& \; = \; 
a_{[-1]} \;m_{-1}\; S^{-1}(a_{[-2]})\; \otimes \; (a_{[0]} \ast m_0) \\
& \; \overset{(\ddag)}{=} \; 
(a_{[0]} \ast m)_{-1} \; a_{[-1]} \; S^{-1}(a_{[-2]})\; \otimes \; (a_{[0]} \ast m)_0 \; \\[.2pc]
& \; = \; 
(a_{[0]} \ast m)_{-1} \; \varepsilon(a_{[-1]})\; \otimes \; (a_{[0]} \ast m)_0 \\[.2pc]
& \; = \; 
(a \ast m)_{-1} \; \otimes \; (a \ast m)_0. 
\end{align*}
}

\noindent At $(\ddag)$, we applied the operator $\Lambda^{-1}$ from Lemma~\ref{lem:Lambda} to \eqref{eq:Fii}.
This concludes the proof. 
\end{proof}

\subsubsection{Functor from a category of Doi--Hopf modules to the reflective center} \label{sec:DHtoECM}

\begin{proposition} \label{prop:functorG}
For $\varphi(m):=m_{-1} \otimes m_0$, we have a functor
\begin{align*}
G: {}_A^{\widehat{H}} \mathsf{DH}(H) &\to \cE_{\lmod{H}}(\lmod{A})\\
\left(M, \; \ast, \; \varphi: M \to \widehat{H} \otimes M\right) &\mapsto \left(M, \; \ast, \; e_X^M:=(e_X^M)_{\varphi}: X \otimes M \to X \otimes M\right)
\end{align*}
where $e_X^M(x \otimes m) = (m_{-1} \cdot x) \otimes m_0$, for $(X, \cdot)$  a left $H$-module.
\end{proposition}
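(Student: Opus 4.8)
The plan is to prove that $G$ is a well-defined functor in two halves: first that each triple $(M,\ast,e^M)$ in the image genuinely lies in $\cE_{\lmod{H}}(\lmod{A})$ (i.e.\ that $e^M$ is a reflection in the sense of Definition~\ref{def:ECM}), and second that $G$ carries morphisms to morphisms and respects composition. The guiding principle is that $G$ simply reverses the construction $F$ of Proposition~\ref{prop:functorF}: where $F$ extracted a $\widehat{H}$-comodule structure via $\varphi(m)=e_H^M(1_H\otimes m)$, here we rebuild a reflection out of $\varphi$ through $e_X^M(x\otimes m)=(m_{-1}\cdot x)\otimes m_0$, and each axiom to be verified is a step in the proof of Proposition~\ref{prop:functorF} read backwards.

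For the object-level claim there are three things to check about $e^M=\{e_X^M\}_{X\in\lmod{H}}$. First, naturality in $X$ is immediate: for an $H$-linear map $f\colon X\to X'$ one has $e_{X'}^M(f(x)\otimes m)=(m_{-1}\cdot f(x))\otimes m_0=f(m_{-1}\cdot x)\otimes m_0$, using only that $f$ commutes with the $H$-action. Second, each $e_X^M$ must be a morphism in $\lmod{A}$, i.e.\ $A$-linear for the induced action $a\,\widetilde{\ast}\,(x\otimes m)=(a_{[-1]}\cdot x)\otimes(a_{[0]}\ast m)$; unwinding both sides shows this is exactly the identity \eqref{eq:Fii}, namely $a_{[-1]}m_{-1}\otimes(a_{[0]}\ast m_0)=(a_{[0]}\ast m)_{-1}a_{[-1]}\otimes(a_{[0]}\ast m)_0$. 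I would derive \eqref{eq:Fii} from the Doi--Hopf compatibility \eqref{eq:Doi-Hopf} (with $\rightharpoonup$ as in Definition~\ref{def:hatH}), using coassociativity of $\delta$ and the antipode axiom for $S^{-1}$, and applying the operator $\Lambda$ of Lemma~\ref{lem:Lambda} to undo the $r_{a_{[-1]}}$-twist; this is precisely the reverse of the $\Lambda^{-1}$-step marked $(\ddag)$ in Proposition~\ref{prop:functorF}(ii).

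Third, and most substantially, one must verify the braided-module axiom \eqref{eq:brmod2} for $e_{X\otimes Y}^M$. Since the associativity constraints are trivial here, the left-hand side expands through the $H$-coproduct acting on $X\otimes Y$ to $(m_{-1(1)}\cdot x)\otimes(m_{-1(2)}\cdot y)\otimes m_0$, whereas the right-hand side is assembled from $c^{\pm1}$ (written in terms of $R$ and $R^{-1}$ via Lemma~\ref{lem:R-c}) and two applications of the reflection $e^M$, after which coassociativity of $\varphi$ collapses the nested coactions $m_{-1}\otimes(m_0)_{-1}$ into a single $\widehat{\Delta}(m_{-1})$. Substituting the explicit formula for $\widehat{\Delta}$ from Definition~\ref{def:hatH} and then reducing the resulting $R$-matrix words using the quasitriangularity identities \eqref{eq:QT1}--\eqref{eq:QT5} should return exactly the left-hand side; this is the computation of Proposition~\ref{prop:functorF}(i) run in the opposite direction, and it is the step I expect to be the main obstacle, as it requires matching the transmuted coproduct $\widehat{\Delta}$ against the braiding by hand. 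Invertibility of $e_X^M$ is then handled separately: since $\widehat{H}$ is a version of Majid's transmutation of the Hopf algebra $H$ (Remark~\ref{rem:Majid}), it carries a braided antipode $\widehat{S}$, and $(x\otimes m)\mapsto(\widehat{S}(m_{-1})\cdot x)\otimes m_0$ provides a two-sided inverse by coassociativity of $\varphi$ and the braided antipode relation (alternatively one writes this inverse out explicitly through $S$ and $R^{-1}$).

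Finally, for functoriality, a morphism $f\colon(M,\ast,\varphi)\to(N,\ast,\varphi')$ in ${}_A^{\widehat{H}}\mathsf{DH}(H)$ is $A$-linear and $\widehat{H}$-colinear, so $f(m)_{-1}\otimes f(m)_0=m_{-1}\otimes f(m_0)$; consequently $(\ide_X\act f)\circ e_X^M$ and $e_X^N\circ(\ide_X\act f)$ both send $x\otimes m$ to $(m_{-1}\cdot x)\otimes f(m_0)$, so $G(f)=f$ is a morphism in $\cE_{\lmod{H}}(\lmod{A})$ by Definition~\ref{def:ECM}(b). Preservation of identities and composition is then clear, since $G$ acts as the identity on underlying linear maps. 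The only genuinely laborious ingredient is the verification of \eqref{eq:brmod2} described above.
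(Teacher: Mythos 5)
Your proposal is correct and follows essentially the same route as the paper's proof: condition (ii) ($A$-linearity of $e_X^M$) is derived from the Doi--Hopf compatibility \eqref{eq:Doi-Hopf} together with the counit/antipode axioms and an application of $\Lambda$ from Lemma~\ref{lem:Lambda}, recovering exactly \eqref{eq:Fii}, and condition (i) (axiom \eqref{eq:brmod2}) is obtained by expanding through $\Delta$, invoking coassociativity of the $\widehat{H}$-coaction, substituting the formula for $\widehat{\Delta}$, and reducing with the quasitriangularity identities --- precisely the computation of Proposition~\ref{prop:functorF} run in reverse, as you say. Your additional verifications (naturality in $X$, invertibility of $e_X^M$ via the braided antipode of the transmutation, and the morphism-level check) are sound and in fact make explicit points that the paper's proof leaves implicit.
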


\begin{proof}
It suffices to establish the following statements:
\begin{itemize}
\item[(i)] $e_X^M$   satisfies \eqref{eq:brmod2} \; (this will follow from $(M ,\varphi) \in \lcomod{\widehat{H}}$), and 
\smallskip
\item[(ii)] $e_X^M \in \lmod{A}$ for any $X \in \lmod{H}$ \; (this will follow from \eqref{eq:Doi-Hopf}).
\end{itemize}
\smallskip

To verify (i), recall the invertible linear map $\omega$ from \eqref{eq:omega} and \eqref{eq:omega-inv}, and recall that $\widehat{\Delta} = \omega \circ \Delta$ for the coproduct $\widehat{\Delta}$ in \Cref{def:hatH}. We use the Sweedler notation $\widehat{\Delta}(h):= h_{\widehat{(1)}} \otimes h_{\widehat{(2)}}$ for $h \in \widehat{H}$,  along with $\Delta(h):= h_{(1)} \otimes h_{(2)}$ for $h \in H$. Also, for $X,Y \in \lmod{H}$, and $x \in X$, $y \in Y$, we have that:
\begin{equation} \label{eq:cXY}
c_{Y,X}(y \otimes x) = \textstyle \sum_i (t_i \cdot x) \otimes (s_i \cdot y)
\quad \quad \text{and} \quad \quad 
c^{-1}_{Y,X}(x \otimes y) = \textstyle \sum_i (s^i \cdot y) \otimes (t^i \cdot x).
\end{equation}

Now (i) holds via the computation below for $X,Y \in \lmod{H}$, $M \in \lmod{A}$,  $x \in X$, $y \in Y$, $m \in M$:
\pagebreak
{\small
\begin{align*}
e_{X \otimes Y}^M(x \otimes y \otimes m) 
& \; = \;
(m_{-1} \cdot (x \otimes y)) \; \otimes \; m_0\\[.2pc]
& \; = \;
((m_{-1})_{(1)} \cdot x)\; \otimes \;((m_{-1})_{(2)} \cdot y)\; \otimes\; m_0\\
& \; \overset{\textnormal{\eqref{eq:omega-inv}}}{=} \;
\textstyle \sum_{i,j} \; [t^i\;(m_{-1})_{\widehat{(1)}} \;  t^j \cdot x]\; \otimes \;[(m_{-1})_{\widehat{(2)}}  \; S^{-1}(s^i) \; s^j \cdot y] \; \otimes\; m_0\\
& \; \overset{\varphi\  \in \lcomod{\widehat{H}}}{=} \;
\textstyle \sum_{i,j} \; [t^i\;m_{-2} \;  t^j \cdot x]\; \otimes \;[m_{-1}  \; S^{-1}(s^i) \; s^j \cdot y] \; \otimes\; m_0\\
& \; \overset{\textnormal{\eqref{eq:QT5}}}{=} \;
\textstyle \sum_{i,j} \; [t_i \;m_{-2} \;  t^j \cdot x]\; \otimes \;[m_{-1}  \; s_i \; s^j \cdot y] \; \otimes\; m_0\\
& \; \overset{\textnormal{\eqref{eq:eHM}}}{=} \;
\textstyle \sum_{i,j} (\ide_X \otimes e_Y^M)[ t_i \; m_{-1} \; t^j \cdot x) \otimes (s_i\; s^j \cdot y)\otimes m_0]\\
& \; \overset{\textnormal{\eqref{eq:cXY}}}{=} \;
\textstyle \sum_j (\ide_X \otimes e_Y^M)(c_{Y,X} \otimes \ide_M)[(s^j \cdot y) \otimes (m_{-1} t^j \cdot x) \otimes m_0]\\
& \; \overset{\textnormal{\eqref{eq:eHM}}}{=} \;
\textstyle \sum_j (\ide_X \otimes e_Y^M)(c_{Y,X} \otimes \ide_M)(\ide_Y \otimes e_X^M)[(s^j \cdot y) \otimes (t^j \cdot x) \otimes m]\\
& \; \overset{\textnormal{\eqref{eq:cXY}}}{=} \;
(\ide_X \otimes e_Y^M)(c_{Y,X} \otimes \ide_M)(\ide_Y \otimes e_X^M)(c_{Y,X}^{-1} \otimes \ide_M)(x \otimes y \otimes m).
\end{align*}
}

\smallskip

Towards (ii), note that, for $a \in A$ and $m \in M$,  we get that:
{\small
\begin{align*}
 (a \ast m)_{-1} \otimes (a \ast m)_{0} &\; \overset{\textnormal{\eqref{eq:Doi-Hopf}}}{=} \;
(a_{[-1]} \rightharpoonup m_{-1}) \otimes (a_{[0]} \ast m_0) \nonumber\\
&\; \overset{\textnormal{Def.\;\ref{def:hatH}}}{=} \; 
((a_{[-1]})_{(2)} \; m_{-1} \; S^{-1}((a_{[-1]})_{(1)}) \; \otimes \; (a_{[0]} \ast m_0) \\
& \; =\;  
a_{[-1]} \; m_{-1} \; S^{-1}(a_{[-2]}) \; \otimes\; (a_{[0]} \ast m_0).\nonumber
\end{align*}
}

\noindent By the counit and antipode axioms, we get that: 
{\small
\begin{align*}
(a_{[0]} \ast m)_{-1} \; a_{[-1]} \; S^{-1}(a_{[-2]}) \otimes (a_{[0]} \ast m)_{0} 
&\; = \;(a_{[0]} \ast m)_{-1} \; \varepsilon(a_{[-1]}) \otimes (a_{[0]} \ast m)_{0}\\
&\; = \;
a_{[-1]} \; m_{-1} \; S^{-1}(a_{[-2]}) \; \otimes\; (a_{[0]} \ast m_0).
\end{align*}
}

\noindent Applying the operator $\Lambda$ from Lemma~\ref{lem:Lambda} to the above equation yields:
{\small
\begin{align*}
(a_{[0]} \ast m)_{-1} \; a_{[-1]} \; S^{-1}(a_{[-2]}) \; a_{[-3]}\otimes (a_{[0]} \ast m)_{0} \; = \;
a_{[-1]} \; m_{-1} \; S^{-1}(a_{[-2]}) \; a_{[-3]} \; \otimes\; (a_{[0]} \ast m_0).
\end{align*}
}

\noindent Using  antipode and counit axioms again yields:
{\small
\begin{align*} 
(a_{[0]} \ast m)_{-1} \; a_{[-1]} \; \otimes \; (a_{[0]} \ast m)_0 \; = \; a_{[-1]} m_{-1} \; \otimes (a_{[0]} \ast m_0).
\end{align*}
}

\noindent Now (ii) follows from the following computation:
{\small
\begin{align*}
e_X^M(a \ast (x \otimes m)) 
& \; = \; 
e_X^M((a_{[-1]} \cdot x) \; \otimes \; (a_{[0]} \ast m)) \hspace{.27in} \overset{\textnormal{\eqref{eq:eHM}}}{=} \; 
((a_{[0]} \ast m)_{-1} \; a_{[-1]} \cdot x) \; \otimes \; (a_{[0]} \ast m)_{0} \\
& \; = \; 
 (a_{[-1]} m_{-1} \cdot x) \; \otimes \; (a_{[0]} \ast m_0) \; = \; 
a \ast ((m_{-1} \cdot x) \;  \otimes \; m_0)\\[.2pc]
& \; = \; 
a \ast e_X^M(x \otimes m).
\end{align*}
}

\noindent This concludes the proof of the result.
\end{proof}

\subsubsection{Isomorphism of categories} \label{sec:ECM-DH-isocat}

Now we establish the category isomorphism \eqref{eq:ECM-DH}. 

\begin{proposition} \label{prop:ECM-DH} We have that the reflective center $\cE_{\lmod{H}}(\lmod{A})$ and the category of Doi--Hopf modules ${}^{\widehat{H}}_A  \mathsf{DH}(H)$  are isomorphic  as  categories.
\end{proposition}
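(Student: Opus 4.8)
The plan is to show that the two functors $F$ and $G$ constructed in \Cref{prop:functorF} and \Cref{prop:functorG} are mutually inverse, which immediately yields the claimed isomorphism of categories. Both functors leave the underlying vector space and the $A$-action $\ast$ untouched, and both act as the identity on the underlying $\Bbbk$-linear maps; consequently it suffices to verify that the two remaining pieces of data---the reflection $e^M$ on the side of $\cE_{\lmod{H}}(\lmod{A})$ and the coaction $\varphi$ on the side of ${}^{\widehat{H}}_A \mathsf{DH}(H)$---are recovered under the composites $G \circ F$ and $F \circ G$, and that the morphism conditions correspond.

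For $G \circ F = \ide$, I would start with an object $(M, \ast, e^M)$, pass under $F$ to the coaction $\varphi(m) = e_H^M(1_H \otimes m) =: m_{-1} \otimes m_0$, and then apply $G$ to obtain a reflection $\widetilde{e}$ with $\widetilde{e}_X^M(x \otimes m) = (m_{-1} \cdot x) \otimes m_0$. The key point is that the original reflection $e^M$ is already forced to have exactly this form: for any left $H$-module $(X, \cdot)$ and any $x \in X$, the assignment $f_x \colon H \to X$, $h \mapsto h \cdot x$, is a morphism in $\lmod{H}$, so naturality of $e^M$ gives $e_X^M(x \otimes m) = e_X^M(f_x(1_H) \otimes m) = (f_x \otimes \ide_M)\, e_H^M(1_H \otimes m) = (m_{-1} \cdot x) \otimes m_0 = \widetilde{e}_X^M(x \otimes m)$. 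Hence $\widetilde{e} = e^M$ and $G \circ F$ is the identity on objects.

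For $F \circ G = \ide$, I would begin with $(M, \ast, \varphi)$ where $\varphi(m) = m_{-1} \otimes m_0$, apply $G$ to get $e_X^M(x \otimes m) = (m_{-1} \cdot x) \otimes m_0$, and then apply $F$, which returns the coaction $m \mapsto e_H^M(1_H \otimes m) = (m_{-1} \cdot 1_H) \otimes m_0 = m_{-1} \otimes m_0 = \varphi(m)$, using only that $1_H$ is the unit for the regular action. Thus $F \circ G$ is the identity on objects as well. On morphisms, both functors fix the underlying linear map, and a direct comparison of the morphism condition in \Cref{def:ECM}(b) with the Doi--Hopf morphism condition (simultaneous $A$-linearity and $\varphi$-compatibility) shows, via the same naturality-at-$H$ reduction, that a map $f$ is a morphism in $\cE_{\lmod{H}}(\lmod{A})$ precisely when it is one in ${}^{\widehat{H}}_A \mathsf{DH}(H)$; so the two functors are mutually inverse on morphisms too.

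The main---and essentially the only---conceptual obstacle is the observation underlying the $G \circ F$ computation: that a reflection $e^M$ is completely determined by its single component $e_H^M$ at the regular module $H$. Once this determinacy is isolated, via the family of module maps $f_x \colon H \to X$ together with naturality of $e^M$, the remaining verifications are formal, since $F$ and $G$ are already known to be well-defined functors by \Cref{prop:functorF} and \Cref{prop:functorG}. I would therefore devote the bulk of the write-up to making this determinacy precise and treat the matching of morphisms as routine bookkeeping.
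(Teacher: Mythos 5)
Your proposal is correct and follows essentially the same route as the paper's proof: both verify $G\circ F = \ide$ via the naturality argument applied to the $H$-module maps $H \to X$, $h \mapsto h\cdot x$ (your $f_x$ is the paper's $l_x$), showing the reflection is determined by its component at the regular module, and both check $F\circ G = \ide$ by evaluating at $1_H$. The only cosmetic difference is that you spell out the morphism-level bookkeeping, which the paper leaves implicit since $F$ and $G$ are already known to be functors fixing the underlying linear maps.
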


\begin{proof}
It suffices to show that the functors $F: \cE_{\lmod{H}}(\lmod{A}) \to {}^{\widehat{H}}_A  \mathsf{DH}(H)$ from Proposition~\ref{prop:functorF}, and $G: {}^{\widehat{H}}_A  \mathsf{DH}(H) \to \cE_{\lmod{H}}(\lmod{A})$ from Proposition~\ref{prop:functorG}, are mutually inverse. Starting with an object $(M,e^M)$ in $\cE_{\lmod{H}}(\lmod{A})$, consider the half-braiding $\ov{e}$ on $GF(M,e^M)$ given by 
$$\ov{e}^M_X(x\otimes m)=(m_{-1}\cdot x)\otimes m_0,$$
for $X \in \lmod{H}$ and $x \in X$. Here,
$m_{-1}\otimes m_0:=e_H^M(1_H\otimes m)$.
For $x\in X$, take the morphism:
$$l_x\colon H\to X, \quad h\mapsto h\cdot x \quad \in  \lmod{H}.$$
Applying naturality of the braiding $e^M$ to $l_x$ gives
$$\ov{e}^M_X(x\otimes m) \; = \; (l_x\otimes \ide_M)e_H^M(1_H\otimes m)  \;=  \;e_X^M(l_x\otimes \ide_M)(1_H\otimes m)  \;=  \;e_X^M(1_H\cdot x\otimes m)  \;= \; e_X^M(x\otimes m).$$
Hence, $\ov{e}=e$, and the identity $GF(M)\to M$ is a morphism of objects in $\cE_{\lmod{H}}(\lmod{A})$. This shows $GF=\ide_{\cE_{\lmod{H}}(\lmod{A})}$.

On the other hand, for an object $M$ in ${}^{\widehat{H}}_A  \mathsf{DH}(H)$  with coaction $M\to \widehat{H}\otimes M, m\mapsto m_{-1}\otimes m_0$, consider the induced $\widehat{H}$-coaction $\ov{\varphi}$ on $M$ obtained on $FG(M)$. Then
$$\ov{\varphi}(m) \; = \;e_H^{G(M)}(1_H\otimes m) \;= \; (m_{-1}\cdot 1_H)\otimes m_0 \;= \; m_{-1}\otimes m_0.$$
This shows that $(FG(M),\ov{\varphi})=(M,\varphi)$ as Doi--Hopf modules, and hence $FG = \ide_{{}^{\widehat{H}}_A  \mathsf{DH}(H)}$.
\end{proof}


\subsection{Reflective centers represented by reflective algebras} 
\label{sec:equivalence2}
The goal of this section is to extend the isomorphism \eqref{eq:ECM-DH} to the isomorphism below:
\begin{equation} \label{eq:ECM-DH-2}
\cE_\cC(\cM) \; \cong\;  {}^{\widehat{H}}_A  \mathsf{DH}(H) \; \cong\;  \lmod{R_H(A)} \quad  \text{as categories,}
\end{equation}
for some $\Bbbk$-algebra $R_H(A)$ which we call a {\it reflective algebra}. In Section~\ref{sec:double-DH}, we construct a general isomorphism between ${}^C_B  \mathsf{DH}(L)$ (from Section~\ref{sec:Doi--Hopf}) and the category of modules of a crossed product algebra $B \rtimes_L (C^*)^\text{op}$. Next, we study the dual of the $H$-module coalgebra $\widehat{H}$ (from Section~\ref{sec:hatH}) in Section~\ref{sec:hatHstar}, and then define $R_H(A)$ as a crossed product algebra in Section~\ref{sec:refl-alg}.


\subsubsection{Doi--Hopf modules and crossed products} 
\label{sec:double-DH}
In the setting of Section \ref{sec:Doi--Hopf}, consider the case when the left $L$-module coalgebra $C$ is finite-dimensional.
Then $C^*$ is a right $L$-module algebra with the $L$-action given by 
\begin{equation} \label{eq:LactC*}
\langle \xi \leftharpoonup \ell, c \rangle := \langle \xi, \ell \rightharpoonup c \rangle,
\end{equation}
for $\xi \in C^*, c \in C, \ell \in L$.
The same action makes $(C^*)^{\textnormal{op}}$
a right $L^{\textnormal{cop}}$-module algebra.
Define the {\it crossed product algebra} (see \cite{Doi1992}*{Section~1}): 
\[
B \rtimes_L (C^*)^{\textnormal{op}} := B \circledast (C^*)^{\textnormal{op}}/ \left( \xi b - b_{[0]} (\xi \leftharpoonup b_{[-1]}),\;\; b \in B, \; \xi \in (C^*)^{\textnormal{op}} \right), 
\]
where $\circledast$ denotes the free product of algebras. That is, for all $a,b \in B$ and $\xi, \zeta \in (C^*)^{\textnormal{op}}$, we get:
\begin{equation} \label{eq:BC-prod}
\begin{array}{rl}
(a \; \xi)(b \; \zeta) \; &:=\text{mult}^{B \rtimes_L (C^*)^{\textnormal{op}}}\big(a \circledast \xi, \; b \circledast \zeta \big)\\[.4pc]
& \; = \text{mult}^{B}\big(a, b_{[0]}\big) \; \text{mult}^{(C^*)^{\textnormal{op}}}\big(\xi \leftharpoonup b_{[-1]}, \zeta\big)\\[.4pc]
& \; =: a \; b_{[0]}\;  \zeta \; (\xi \leftharpoonup b_{[-1]}).
\end{array}
\end{equation}

\begin{lemma}\label{lem:cross-prod} We have that $B \rtimes_L (C^*)^{\textnormal{op}}$ is isomorphic to $B \otimes (C^*)^{\textnormal{op}}$ as a $(B,(C^*)^{\textnormal{op}})$-bimodule. 
\end{lemma}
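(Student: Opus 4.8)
The plan is to exhibit the obvious candidate map and prove it is bijective, using normal ordering for surjectivity and a module-theoretic construction of the inverse for injectivity. Write $P := B \circledast (C^*)^{\textnormal{op}}$ for the free product and let $\pi\colon B \otimes (C^*)^{\textnormal{op}} \to B \rtimes_L (C^*)^{\textnormal{op}}$ be the linear map $b \otimes \xi \mapsto b\xi$, where $b\xi$ denotes the image of $b \circledast \xi$ in the quotient. Since the $(B,(C^*)^{\textnormal{op}})$-bimodule structure on $B \rtimes_L (C^*)^{\textnormal{op}}$ is given by left multiplication by $B$ and right multiplication by $(C^*)^{\textnormal{op}}$, associativity of the crossed product immediately yields $\pi(a \cdot (b \otimes \xi) \cdot \zeta) = a\,\pi(b\otimes\xi)\,\zeta$ for $a\in B$, $\zeta\in (C^*)^{\textnormal{op}}$, so $\pi$ is a homomorphism of $(B,(C^*)^{\textnormal{op}})$-bimodules. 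It therefore remains only to prove that $\pi$ is bijective.

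For surjectivity I would show that every element can be brought into the normal form $\sum b\xi$. Indeed, $P$ is spanned by alternating words in $B$ and $(C^*)^{\textnormal{op}}$, and the defining relation of the crossed product, read as $\xi b = b_{[0]}(\xi \leftharpoonup b_{[-1]})$, lets one move each $B$-letter to the left past each adjacent $(C^*)^{\textnormal{op}}$-letter at the cost of twisting by the coaction $\delta$; adjacent letters from the same factor then collapse via the multiplications of $B$ and of $(C^*)^{\textnormal{op}}$. An induction on word length shows the image of every word lies in $\pi\bigl(B \otimes (C^*)^{\textnormal{op}}\bigr)$, so $\pi$ is onto.

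For injectivity I would equip $W := B \otimes (C^*)^{\textnormal{op}}$ with a left $P$-module structure and evaluate at $1_B \otimes 1_{C^*}$. Let $B$ act by left multiplication on the first tensor factor, and let $(C^*)^{\textnormal{op}}$ act by $\zeta \cdot (b \otimes \xi) := b_{[0]} \otimes \xi\,(\zeta \leftharpoonup b_{[-1]})$, the product taken in $(C^*)^{\textnormal{op}}$; as $P$ is a free product, these two actions assemble into a $P$-module structure with no further compatibility needed. The content is to check that the $(C^*)^{\textnormal{op}}$-action is unital and associative and that the relations $\xi b - b_{[0]}(\xi \leftharpoonup b_{[-1]})$ act as zero on all of $W$, so that the $P$-action descends to $B \rtimes_L (C^*)^{\textnormal{op}}$. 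Granting this, $\rho\colon B \rtimes_L (C^*)^{\textnormal{op}} \to W$, $u \mapsto u \cdot (1_B \otimes 1_{C^*})$, is a well-defined linear map, and using $\delta(1_B) = 1_L \otimes 1_B$ one computes $\rho(\pi(b\otimes\xi)) = b\otimes\xi$, i.e. $\rho\circ\pi = \ide_W$; hence $\pi$ is injective, and together with surjectivity this makes $\pi$ a bimodule isomorphism with inverse $\rho$. I expect the main obstacle to be precisely this last step: verifying the $(C^*)^{\textnormal{op}}$-action and the vanishing of the relations are Sweedler-index manipulations that must be carried out carefully for the $\textnormal{op}$/$\textnormal{cop}$ conventions, using that $\delta$ is an algebra map, the comodule counit axiom, and the compatibility \eqref{eq:LactC*} of $\leftharpoonup$ with $\rightharpoonup$, while everything else is formal.
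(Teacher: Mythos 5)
Your overall strategy is sound and is essentially the paper's own proof in expanded form: the paper endows the vector space $B \otimes (C^*)^{\textnormal{op}}$ directly with the product \eqref{eq:BC-prod}, checks associativity, and identifies the crossed product with the resulting algebra; your left $P$-module structure on $W$ is exactly the left regular representation of that algebra, and your retraction $\rho$ makes explicit the bijectivity that the paper's ``Therefore'' leaves implicit. The bimodule property of $\pi$, the normal-ordering argument for surjectivity, and the reduction of well-definedness of $\rho$ to the vanishing of the generating relations are all fine.

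There is, however, one concrete problem, located exactly at the step you yourself flagged as delicate: the order of the factors in your $(C^*)^{\textnormal{op}}$-action. Writing juxtaposition for the product of $C^*$, your formula with ``the product taken in $(C^*)^{\textnormal{op}}$'' reads $\zeta \cdot (b \otimes \xi) = b_{[0]} \otimes (\zeta \leftharpoonup b_{[-1]})\,\xi$, and this is \emph{not} a left $(C^*)^{\textnormal{op}}$-module structure: using the right $L$-module algebra axiom for $\leftharpoonup$ and coassociativity of $\delta$, one finds
\begin{align*}
\zeta_1 \cdot \bigl(\zeta_2 \cdot (b \otimes \xi)\bigr) &= b_{[0]} \otimes (\zeta_1 \leftharpoonup b_{[-1]})\,(\zeta_2 \leftharpoonup b_{[-2]})\,\xi, \\
\bigl(\zeta_2\,\zeta_1\bigr) \cdot (b \otimes \xi) &= b_{[0]} \otimes (\zeta_2 \leftharpoonup b_{[-2]})\,(\zeta_1 \leftharpoonup b_{[-1]})\,\xi,
\end{align*}
where $\zeta_2\zeta_1$ (product in $C^*$) is the product of $\zeta_1$ and $\zeta_2$ in $(C^*)^{\textnormal{op}}$; the two right-hand sides differ by the order of the first two factors and are unequal in general. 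The associative action is the one obtained by transporting left multiplication on normal forms, since in the crossed product $\zeta\,(b\,\xi) = b_{[0]}\,(\zeta \leftharpoonup b_{[-1]})\,\xi$ with $(\zeta \leftharpoonup b_{[-1]})$ appearing as the \emph{left} factor of a product in $(C^*)^{\textnormal{op}}$; in $C^*$-notation this is
\[
\zeta \cdot (b \otimes \xi) \;:=\; b_{[0]} \otimes \xi\,(\zeta \leftharpoonup b_{[-1]})
\qquad (\text{product taken in } C^*, \text{ not } (C^*)^{\textnormal{op}}).
\]
So your displayed string of symbols is correct precisely if the juxtaposition means the multiplication of $C^*$ (which is how the paper writes \eqref{eq:BC-prod}), and incorrect under the reading your parenthetical dictates. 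With the order fixed, all your remaining checks close up: the action is unital by the counit axiom for $\delta$, associativity is the computation above, the relations $\xi b - b_{[0]}(\xi \leftharpoonup b_{[-1]})$ annihilate $W$ (both terms send $a \otimes \eta$ to $b_{[0]}a_{[0]} \otimes \eta\,(\xi \leftharpoonup b_{[-1]}a_{[-1]})$, using that $\delta$ is an algebra map and $\leftharpoonup$ is a right action), and $\rho \circ \pi = \ide_W$, so $\pi$ is a bimodule isomorphism as claimed.
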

\begin{proof} Consider the tensor product $B \otimes (C^*)^{\textnormal{op}}$. It is straight-forward to verify  that the product~\eqref{eq:BC-prod}
on it is associative, where $a, b \in B$ and $\xi, \zeta \in (C^*)^{\textnormal{op}}$. Therefore, $B \rtimes_L (C^*)^{\textnormal{op}}$ is isomorphic to the space $B \otimes (C^*)^{\textnormal{op}}$ with this product. This implies the statement of the lemma.
\end{proof}

We now recall results relating Doi--Hopf modules to modules over the crossed product algebra.

\begin{lemma} \label{lem:C-comod}
 Let $\{c_d,  \xi_d\}_d$ be a dual basis of $C$. Then, we have an isomorphism of categories: 
\[
\Omega: C\text{-}\mathsf{comod} \overset{\sim}{\to} \lmod{(C^*)^{\textnormal{op}}},
\]
given by the assignments, for $\xi \in C^*$ and $m \in M$:
 \[
 {\small
 \hspace{-.1in}
 \begin{array}{c}
 \Big(M, \; \varphi: M \to C \otimes M,\;  \varphi(m):= m_{-1} \otimes m_0\Big) 
 \mapsto 
 \Big(M,\; \star_\varphi: (C^*)^{\textnormal{op}} \otimes M \to  M, \; \xi \star_\varphi m:= \langle \xi, m_{-1} \rangle m_0\Big),\\[.5pc]
 \Big(M, \; \varphi_\star: M \to C \otimes M,\;  \varphi_\star(m):= \sum_d c_d \otimes (\xi_d \star m) \Big) 
 \mapsfrom 
 \Big(M,\; \star: (C^*)^{\textnormal{op}} \otimes M \to  M\Big).
 \end{array}
 }
 \]

 \vspace{-.22in} 
 
 \qed
\end{lemma}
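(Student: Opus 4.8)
The statement is the classical duality between comodules over a finite-dimensional coalgebra and modules over its dual algebra, adapted to the present conventions; the finite-dimensionality of $C$ is precisely what makes both directions of the correspondence available (via the dual basis $\{c_d, \xi_d\}_d$). The plan is to check that the two displayed assignments are well-defined, that they extend to functors, and that they are mutually inverse, while keeping careful track of why the opposite algebra $(C^*)^{\textnormal{op}}$ appears. Throughout I write the convolution product on $C^*$ as $\xi\zeta$, so that $\langle \xi\zeta, c\rangle = \langle \xi, c_{(1)}\rangle \langle \zeta, c_{(2)}\rangle$.

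First I would verify that $\star_\varphi$ endows $M$ with a left $(C^*)^{\textnormal{op}}$-module structure. Unitality, $\varepsilon_C \star_\varphi m = m$, is just the counit axiom $\langle \varepsilon_C, m_{-1}\rangle m_0 = m$ for the comodule. For associativity, write the iterated coaction as $m_{(-2)} \otimes m_{(-1)} \otimes m_{(0)}$, which by coassociativity equals both $(\ide_C \otimes \varphi)\varphi(m)$ and the coproduct of $C$ applied in the $m_{-1}$-slot. Then a direct computation gives
\[
\xi \star_\varphi (\zeta \star_\varphi m) = \langle \zeta, m_{(-2)}\rangle \, \langle \xi, m_{(-1)}\rangle \, m_{(0)},
\qquad
(\xi\zeta) \star_\varphi m = \langle \xi, m_{(-2)}\rangle \, \langle \zeta, m_{(-1)}\rangle \, m_{(0)},
\]
so that $\xi \star_\varphi (\zeta \star_\varphi m) = (\zeta\xi) \star_\varphi m$. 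This is exactly the left-module axiom over $(C^*)^{\textnormal{op}}$, and it is the one place where the ${}^{\textnormal{op}}$ is forced. Dually, I would check that $\varphi_\star(m) := \sum_d c_d \otimes (\xi_d \star m)$ is coassociative and counital, using the dual-basis identities $\sum_d \langle \xi_d, c\rangle c_d = c$ and $\sum_d \langle \xi, c_d\rangle \xi_d = \xi$.

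Next I would establish functoriality: a linear map $f\colon M \to N$ satisfies $\varphi_N \circ f = (\ide_C \otimes f)\circ \varphi_M$ if and only if $f$ is $(C^*)^{\textnormal{op}}$-linear for the induced actions. The forward direction follows by pairing the comodule identity with an arbitrary $\xi \in C^*$ in the $C$-slot; the reverse direction follows by applying the module identity to each $\xi_d$ and recombining via the dual basis. Hence $\Omega$ and its candidate inverse act as the identity on underlying maps and send morphisms to morphisms.

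Finally, the mutual-inverse property is a short dual-basis computation. Starting from $\varphi$ one finds $\varphi_{\star_\varphi}(m) = \sum_d c_d \otimes \langle \xi_d, m_{-1}\rangle m_0 = m_{-1}\otimes m_0 = \varphi(m)$, collapsing the sum by $\sum_d \langle \xi_d, m_{-1}\rangle c_d = m_{-1}$; and starting from an action $\star$ one finds $\xi \star_{\varphi_\star} m = \sum_d \langle \xi, c_d\rangle (\xi_d \star m) = \bigl(\sum_d \langle \xi, c_d\rangle \xi_d\bigr)\star m = \xi \star m$. I expect no real obstacle here: the only genuine point of care is the bookkeeping of left/right and opposite conventions in the associativity check above, and everything else is routine verification against the coalgebra and dual-basis axioms.
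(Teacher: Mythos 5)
Your proof is correct: the module axiom over $(C^*)^{\textnormal{op}}$, the coassociativity/counitality of $\varphi_\star$, the functoriality, and the mutual-inverse computations are all verified accurately, and you correctly pinpoint the associativity check $\xi \star_\varphi (\zeta \star_\varphi m) = (\zeta\xi)\star_\varphi m$ as the step forcing the opposite algebra. The paper itself states this lemma with no proof (it is marked \qed as the classical finite-dimensional comodule--module duality), and your argument is precisely the standard dual-basis verification it takes for granted.
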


The following result is an analogue of \cite{Doi1992}*{Remark~1.3(b)} with our conventions.

\begin{proposition} \label{prop:isom-DH-double} The functor below is an isomorphism of  categories:
\begin{align*}
\Omega_{L;B,C}&\colon {}^C_B  \mathsf{DH}(L) \to \lmod{(B \rtimes_L (C^*)^{\textnormal{op}})}, \quad
(M, \ast, \varphi) \mapsto (M, \star),
\end{align*}
where $\star$ is defined by $\xi \star m:=\xi \star_\varphi m$ as given in Lemma~\ref{lem:C-comod}, for $\xi\in C^*$, and by $b\star m=b\ast m$, for $b\in B$ and $m\in M$.
\end{proposition}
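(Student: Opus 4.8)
The plan is to construct the inverse of $\Omega_{L;B,C}$ by restriction along the two subalgebra inclusions $B\hookrightarrow B\rtimes_L(C^*)^{\textnormal{op}}$ and $(C^*)^{\textnormal{op}}\hookrightarrow B\rtimes_L(C^*)^{\textnormal{op}}$, and to reduce every compatibility check to the single defining relation $\xi b = b_{[0]}(\xi\leftharpoonup b_{[-1]})$ of the crossed product. By Lemma~\ref{lem:cross-prod}, the crossed product is spanned by normal-form products $b\xi$ with $b\in B$ and $\xi\in(C^*)^{\textnormal{op}}$, and both $B$ and $(C^*)^{\textnormal{op}}$ sit inside it as subalgebras. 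Consequently, a $B\rtimes_L(C^*)^{\textnormal{op}}$-module structure on a vector space $M$ is precisely the same datum as a pair consisting of a $B$-action $\ast$ and a $(C^*)^{\textnormal{op}}$-action $\star$ satisfying $\xi\star(b\ast m)=b_{[0]}\ast\big((\xi\leftharpoonup b_{[-1]})\star m\big)$ for all $b\in B$, $\xi\in(C^*)^{\textnormal{op}}$, $m\in M$ (the operator form of the relation, with the rightmost factor acting first).

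First I would check that $\Omega_{L;B,C}$ is well defined on objects. Given $(M,\ast,\varphi)\in{}^C_B\mathsf{DH}(L)$, the assignment $\xi\star m:=\xi\star_\varphi m$ is a $(C^*)^{\textnormal{op}}$-action by Lemma~\ref{lem:C-comod}, and $\ast$ is a $B$-action by hypothesis, so the only remaining point is the displayed crossed-product relation. This I would verify by the short computation
\begin{align*}
\xi\star(b\ast m)
&=\langle\xi,(b\ast m)_{-1}\rangle\,(b\ast m)_0
\overset{\eqref{eq:Doi-Hopf}}{=}\langle\xi, b_{[-1]}\rightharpoonup m_{-1}\rangle\,(b_{[0]}\ast m_0)\\
&\overset{\eqref{eq:LactC*}}{=}\langle\xi\leftharpoonup b_{[-1]}, m_{-1}\rangle\,(b_{[0]}\ast m_0)
=b_{[0]}\ast\big((\xi\leftharpoonup b_{[-1]})\star m\big),
\end{align*}
which uses the Doi--Hopf compatibility \eqref{eq:Doi-Hopf}, the definition \eqref{eq:LactC*} of $\leftharpoonup$, and $\xi\star_\varphi m=\langle\xi,m_{-1}\rangle m_0$. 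Hence $(M,\star)\in\lmod{(B\rtimes_L(C^*)^{\textnormal{op}})}$.

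Conversely, I would define the candidate inverse functor by restricting a module $N$ over $B\rtimes_L(C^*)^{\textnormal{op}}$ along the two inclusions to obtain a $B$-action $\ast$ and a $(C^*)^{\textnormal{op}}$-action $\star$, the latter corresponding under Lemma~\ref{lem:C-comod} to a $C$-comodule structure $\varphi_\star(m)=\sum_d c_d\otimes(\xi_d\star m)$. Running the displayed chain in reverse recovers \eqref{eq:Doi-Hopf} from the crossed-product relation, so $(N,\ast,\varphi_\star)$ is a Doi--Hopf module. That the two constructions are mutually inverse on objects is then immediate from the mutually inverse assignments of Lemma~\ref{lem:C-comod}: from $(M,\ast,\varphi)$ one recovers $\varphi$ as $\varphi_{\star_\varphi}$, and from $N$ one recovers $\ast$ and $\star$, which determine the full $B\rtimes_L(C^*)^{\textnormal{op}}$-action since $B$ and $(C^*)^{\textnormal{op}}$ generate the algebra.

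Finally, on morphisms I would argue that a linear map $f\colon M\to M'$ is a $B\rtimes_L(C^*)^{\textnormal{op}}$-module map if and only if it is simultaneously a $B$-module map and a $(C^*)^{\textnormal{op}}$-module map (again because the two subalgebras generate), and by Lemma~\ref{lem:C-comod} the latter is equivalent to $f$ being a $C$-comodule map. Thus $f$ is a morphism of crossed-product modules exactly when it is a morphism of Doi--Hopf modules, so $\Omega_{L;B,C}$ is fully faithful; together with the bijection on objects this gives an isomorphism of categories. I do not expect a genuine obstacle here: the content is the single computation above, and the only delicate point is the bookkeeping of the opposite-algebra and right-action conventions in \eqref{eq:BC-prod} and \eqref{eq:LactC*}, which is the sole place where an order-of-multiplication error could arise.
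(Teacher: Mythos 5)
Your proposal is correct and follows essentially the same route as the paper: both dualize the $C$-coaction to a $(C^*)^{\textnormal{op}}$-action via Lemma~\ref{lem:C-comod}, verify the crossed-product relation $\xi\star(b\ast m)=b_{[0]}\ast\big((\xi\leftharpoonup b_{[-1]})\star m\big)$ by the identical computation using \eqref{eq:Doi-Hopf} and \eqref{eq:LactC*}, and conclude the isomorphism from the bijectivity of that dualization. You are merely more explicit than the paper about two points it treats as immediate — that a module over the quotient of the free product is exactly a pair of actions satisfying the relation, and that morphism spaces coincide because $B$ and $(C^*)^{\textnormal{op}}$ generate the crossed product — which is fine and changes nothing in substance.
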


\begin{proof}
Recall that $M \in {}^C_B  \mathsf{DH}(L)$ is a left $B$-module via the action $\ast : B \otimes M \to M$, and  a left $C$-comodule via the action $\varphi : M \to C \otimes M$, $m \mapsto m_{-1} \otimes m_0$. By dualization, $M$ is a left $(C^*)^{\textnormal{op}}$-module via Lemma~\ref{lem:C-comod}. The two actions on $M$ are compatible in the following way:
{\small
\[
\begin{array}{rll}
\xi \star ( b \ast m) & 
\; = \; \langle \xi, (b \ast m)_{-1} \rangle (b \ast m)_{0} 
& \; \overset{\textnormal{\eqref{eq:Doi-Hopf}}}{=} \; \langle \xi, b_{[-1]} \rightharpoonup m_{-1} \rangle (b_{[0]} \ast m_0)
\\[.4pc] 
&\; = \; b_{[0]} \ast \big( \langle \xi \leftharpoonup b_{[-1]}, m_{-1} \rangle m_0\big) 
& \; = \; b_{[0]} \ast \big( (\xi \leftharpoonup b_{[-1]}) \star m \big), 
\end{array}
\]
}

\smallskip

\noindent for $\xi \in C^*, b \in B, m \in M$. Therefore, the actions $\ast$ (resp., $\star$) of $B$ and (resp., $(C^*)^{\textnormal{op}}$) on $M$ induce the stated action $\star$ of the crossed product $B \rtimes_L (C^*)^{\textnormal{op}}$ on $M$, that is, 
$M \in \lmod{(B \rtimes_L (C^*)^{\textnormal{op}})}$. Clearly, the space of morphisms between
$M, N \in {}^C_B  \mathsf{DH}(L)$ coincides with the space of morphisms between $M$ and $N$ considered as $B \rtimes_L (C^*)^{\textnormal{op}}$-modules. This yields a functor:
\[
\Omega_{L; B, C}: {}^C_B  \mathsf{DH}(L) \to \lmod{(B \rtimes_L (C^*)^{\textnormal{op}})}.
\]
Moreover, this is an isomorphism of categories since $\lcomod{C} \cong \lmod{(C^*)^{\textnormal{op}}}$ by Lemma~\ref{lem:C-comod}.
\end{proof}

\begin{remark} \label{rem:GM}
Note that, up to differences in conventions, the crossed product algebra $B \rtimes_L (C^*)^{\textnormal{op}}$ is a smash product algebra as defined in \cite{Takeuchi}, cf.~\cite{CMZ}*{Section 3}.
We also note that sometimes  Doi-Hopf modules are referred to as Doi--Koppinen modules due to independent work of Koppinen in \cite{Kop}.  
\end{remark}

\subsubsection{The right $H$-module algebra $\widehat{H}^*$} 
\label{sec:hatHstar}
Recall the left $H$-module coalgebra $\widehat{H}$ introduced in Section~\ref{sec:hatH}. 
Then, when $H$ is finite-dimensional, the dual vector space $\widehat{H}^*$ has a canonical structure of a right $H$-module algebra, which is described in the next lemma. 

\begin{lemma} \label{lem:hatHstar} 
Assume that $H$ is finite-dimensional. Then, given the left $H$-module coalgebra $\widehat{H}$ in \Cref{def:hatH}, we have the following statements.
\begin{enumerate}[\upshape (a)]
\item The induced algebra structure on $\widehat{H}^*$ is given as follows, for $\xi, \zeta \in \widehat{H}^*$:
\begin{align*}
\textnormal{mult}^{\widehat{H}^*}(\xi, \zeta)
& \; = \;
\textstyle \sum_{i,j} \big( t_i \twoheadrightarrow \xi \twoheadleftarrow S(t_j) \big)
\big( s_i s_j \twoheadrightarrow \zeta \big).
\end{align*}

\smallskip

\item The induced right $H$-module algebra structure of $\widehat{H}^*$ is given as follows, for $\ell \in H$, $\xi \in \widehat{H}^*$:
\[
\xi \leftharpoonup \ell = S^{-1}(\ell_{(1)}) \twoheadrightarrow \xi \twoheadleftarrow \ell_{(2)}.
\]  
\end{enumerate}
\end{lemma}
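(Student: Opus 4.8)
The plan is to prove both formulas by direct dualization, using only the definitions of the induced structures together with the quasitriangularity identities in \eqref{eq:QT5}. Since the existence of the right $H$-module algebra structure on $\widehat{H}^*$ is already guaranteed by the general duality between left $H$-module coalgebras and right $H$-module algebras, the task is purely to identify the explicit formulas. I would therefore pair each claimed expression against an arbitrary $h \in \widehat{H} = H$ and match it, via the Hopf pairing \eqref{eq:Hopfpair} and the actions $\twoheadrightarrow, \twoheadleftarrow$ from \eqref{eq:twohead}, with the structure transported from $\widehat{H}$.

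For part (b), I would start from the defining relation \eqref{eq:LactC*}, namely $\langle \xi \leftharpoonup \ell, h \rangle = \langle \xi, \ell \rightharpoonup h \rangle$, and substitute the $H$-action $\ell \rightharpoonup h = \ell_{(2)}\, h\, S^{-1}(\ell_{(1)})$ from \Cref{def:hatH}, giving $\langle \xi \leftharpoonup \ell, h \rangle = \langle \xi, \ell_{(2)}\, h\, S^{-1}(\ell_{(1)}) \rangle$. On the other hand, unwinding the right-hand side of the claimed formula with \eqref{eq:twohead} yields $\langle S^{-1}(\ell_{(1)}) \twoheadrightarrow \xi \twoheadleftarrow \ell_{(2)}, h \rangle = \langle \xi, \ell_{(2)}\, h\, S^{-1}(\ell_{(1)}) \rangle$. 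The two expressions agree termwise, so (b) follows at once; this part is routine.

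For part (a), I would dualize the coproduct $\widehat{\Delta}$ of \Cref{def:hatH}: by definition of the dual algebra structure and \eqref{eq:Hopfpair}, one has $\langle \mathrm{mult}^{\widehat{H}^*}(\xi,\zeta), h \rangle = \langle \xi \otimes \zeta, \widehat{\Delta}(h)\rangle = \sum_{i,j} \langle \xi, t_j h_{(1)} t_i \rangle \, \langle \zeta, h_{(2)} s_i S^{-1}(s_j)\rangle$. Separately, expanding the claimed formula with \eqref{eq:twohead} and the ordinary product of $H^*$ (dual to $\Delta$) gives $\langle (t_i \twoheadrightarrow \xi \twoheadleftarrow S(t_j))(s_i s_j \twoheadrightarrow \zeta), h \rangle = \sum_{i,j} \langle \xi, S(t_j) h_{(1)} t_i \rangle \, \langle \zeta, h_{(2)} s_i s_j \rangle$. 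The $i$-indexed legs coincide verbatim, so it remains to reconcile the $j$-indexed legs, which carry $(t_j, S^{-1}(s_j))$ in the first expression and $(S(t_j), s_j)$ in the second.

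The main (and essentially only nontrivial) step is then the $R$-matrix identity $\sum_j t_j \otimes S^{-1}(s_j) = \sum_j S(t_j) \otimes s_j$ in $H \otimes H$, which I would derive from $R = (S \otimes S)(R)$ in \eqref{eq:QT5}: flipping the two tensor factors of this identity gives $\sum_j t_j \otimes s_j = \sum_j S(t_j) \otimes S(s_j)$, and applying $\ide \otimes S^{-1}$ to both sides yields the desired equality. Substituting this into the two displayed sums makes the $j$-contributions match, completing (a). The point requiring care is to keep the placement of each $R$-matrix leg consistent through the dualization (left versus right of $h_{(1)}$ inside $\xi$, and the order of the trailing factors inside $\zeta$), so I would track slot positions rather than values alone, which is where a sign-free but bookkeeping-heavy argument could otherwise go wrong.
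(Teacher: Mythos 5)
Your proposal is correct and follows essentially the same route as the paper: dualize $\widehat{\Delta}$ and $\rightharpoonup$ through the Hopf pairing, rewrite via $\twoheadrightarrow$ and $\twoheadleftarrow$, and reconcile the $j$-legs using the identity $\textstyle\sum_j t_j \otimes S^{-1}(s_j) = \sum_j S(t_j) \otimes s_j$ derived from $R = (S\otimes S)(R)$ in \eqref{eq:QT5}. The only difference is cosmetic --- you expand both sides down to pairings and match them, while the paper runs a single linear chain of equalities --- and your explicit derivation of the $R$-matrix identity spells out a step the paper leaves implicit.
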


\begin{proof} Part (a) holds by the following computation:
{\small
\begin{align*}
\langle \xi \zeta, h \rangle \; &= \; 
\langle \xi  \otimes  \zeta, \; \widehat{\Delta}(h) \rangle
\\
\; &
\overset{\textnormal{Def.\;\ref{def:hatH}}}{=} \; 
\langle \xi \otimes  \zeta, \; \textstyle \sum_{i,j} t_j  h_{(1)}  t_i \; \otimes \; h_{(2)}  s_i  S^{-1}(s_j) \rangle
\\[.2pc]
\; &= \; \textstyle \sum_{i,j} \langle \xi, \; t_j  h_{(1)}  t_i \rangle \; \langle \zeta,\; h_{(2)}  s_i  S^{-1}(s_j) \rangle
\\[.2pc] 
\; & \overset{\textnormal{\eqref{eq:twohead}}}{=} \; \textstyle \sum_{i,j} \langle t_i \twoheadrightarrow \xi \twoheadleftarrow t_j, h_{(1)} \rangle \; \langle s_i S^{-1}(s_j) \twoheadrightarrow \zeta, \; h_{(2)} \rangle 
\\[.2pc] 
\; & \overset{\textnormal{\eqref{eq:Hopfpair}}}{=} \; \textstyle \sum_{i,j} \langle ( t_i \twoheadrightarrow \xi \twoheadleftarrow t_j ) ( s_i S^{-1}(s_j) \twoheadrightarrow \zeta), \; h \rangle
\\
\; & \overset{\textnormal{\eqref{eq:QT5}}}{=} \; \textstyle \sum_{i,j} \langle ( t_i \twoheadrightarrow \xi \twoheadleftarrow S(t_j) ) ( s_i s_j \twoheadrightarrow \zeta), \; h \rangle, 
\end{align*}
}

\noindent for all $\xi, \zeta \in \widehat{H}^*$ and $h \in \widehat{H}$. Part (b) is proved by the next computation:
{\small
\[
\langle \xi \leftharpoonup \ell, h \rangle \; = \; 
\langle \xi, \ell \rightharpoonup h \rangle
\;  \overset{\textnormal{Def.\;\ref{def:hatH}}}{=} \; \langle \xi, \;\ell_{(2)} h S^{-1}(\ell_{(1)}) \rangle 
 \; \overset{\textnormal{\eqref{eq:twohead}}}{=} \; \langle S^{-1}(\ell_{(1)}) \twoheadrightarrow \xi \twoheadleftarrow \ell_{(2)}, \; h \rangle,
\]
}

\noindent for all $\xi \in \widehat{H}^*, h \in \widehat{H}, \ell \in H$.
\end{proof}


\subsubsection{Definition of the reflective algebra}
\label{sec:refl-alg}

Now we present the main construction of this section.

\begin{definition}
For a finite-dimensional quasitriangular Hopf algebra $H$ and a left $H$-comodule algebra $A$, define the {\emph{reflective algebra of $A$ with respect to $H$}} to be the crossed product algebra:
\[
R_H(A):= A \rtimes_H (\widehat{H}^*)^{\textnormal{op}}.
\]
\end{definition}

The algebras $A$ and $(\widehat{H}^*)^{\textnormal{op}}$ are canonical subalgebras of the reflective algebra $R_H(A)$. Moreover, by Lemma~\ref{lem:cross-prod}, $R_H(A)$ is isomorphic to $A \otimes (\widehat{H}^*)^{\textnormal{op}}$ as an $(A, (\widehat{H}^*)^{\textnormal{op}})$-bimodule. Also, pertaining to Majid's transmuted Hopf algebras discussed in Remark~\ref{rem:Majid}, we have that 
\[
R_H(A) \; \cong \; A \rtimes_H (\widehat{H}^{\textnormal{cop}})^* \cong  A \rtimes_H (H^{\textnormal{cop}})_{\textnormal{trm}}, \quad \text{as $\Bbbk$-algebras}.
\]

Specializing the functor $\Omega_{L,B,C}$ from Proposition \ref{prop:isom-DH-double} to 
$L:=H, B:=A, C:=\widehat{H}^*$, gives the following corollary. 

\begin{corollary} \label{cor:isom-refl-alg} For a finite-dimensional quasitriangular Hopf algebra $H$ and a left $H$-comodule algebra $A$, 
the functor 
\[
\Omega:= \Omega_{H; A, \widehat{H}^*} : {}_A^{\widehat{H}}  \mathsf{DH}(H) \overset{\sim}{\longrightarrow} \lmod{R_H(A)}.
\]
is an isomorphism of  categories. \qed
\end{corollary}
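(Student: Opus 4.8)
The plan is to obtain the statement as a direct specialization of Proposition~\ref{prop:isom-DH-double}, so the work lies entirely in matching the general data $(L,B,C)$ to the situation at hand and confirming that the hypotheses of that proposition hold. First I would set $L := H$, $B := A$, and $C := \widehat{H}$. Here $H$ is a finite-dimensional Hopf algebra and $A$ is a left $H$-comodule algebra by the standing hypotheses of Section~\ref{sec:setting-repr}, so the first two inputs are immediate. For the third, Proposition~\ref{prop:isom-DH-double} (together with the crossed product construction of Section~\ref{sec:double-DH}) requires $C$ to be a \emph{finite-dimensional} left $L$-module coalgebra; by Definition~\ref{def:hatH} and the remark following it, $\widehat{H}$ is a left $H$-module coalgebra, and since $\widehat{H} = H$ as a vector space with $H$ finite-dimensional, $\widehat{H}$ is finite-dimensional. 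Thus $C = \widehat{H}$ satisfies the needed hypotheses, and its dual $C^* = \widehat{H}^*$ is precisely the right $H$-module algebra computed in Lemma~\ref{lem:hatHstar}.

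With this data in place, Proposition~\ref{prop:isom-DH-double} produces an isomorphism of categories
\[
\Omega_{H;A,\widehat{H}} : {}^{\widehat{H}}_A \mathsf{DH}(H) \overset{\sim}{\longrightarrow} \lmod{\big(A \rtimes_H (\widehat{H}^*)^{\textnormal{op}}\big)},
\]
where the combined $R_H(A)$-action on a Doi--Hopf module is read off from Lemma~\ref{lem:C-comod} and the proof of Proposition~\ref{prop:isom-DH-double}, so no separate verification of the action is required. By the definition of the reflective algebra in Section~\ref{sec:refl-alg}, the target crossed product algebra $A \rtimes_H (\widehat{H}^*)^{\textnormal{op}}$ is exactly $R_H(A)$, whence the target category is $\lmod{R_H(A)}$, completing the identification claimed in the corollary.

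There is essentially no hard computation in this argument; it is a pure specialization, and the only point requiring care is the bookkeeping of dualities. One must keep track that it is $\widehat{H}$ itself—not its dual—that plays the role of the module coalgebra $C$, while $\widehat{H}^*$ enters the crossed product as $C^*$. The finite-dimensionality of $H$ is what makes $\widehat{H}^{**} \cong \widehat{H}$, and hence guarantees both that the functor of Proposition~\ref{prop:isom-DH-double} applies to the pair $(\widehat{H}, \widehat{H}^*)$ and that it is a genuine isomorphism of categories rather than merely a fully faithful functor. Consequently the corollary follows with no additional input beyond the already-established module-coalgebra structure on $\widehat{H}$ and the definition of $R_H(A)$.
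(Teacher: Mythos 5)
Your proof is correct and is essentially the paper's own argument: the corollary is obtained as a direct specialization of Proposition~\ref{prop:isom-DH-double}, with $L=H$, $B=A$, and the module coalgebra $C=\widehat{H}$ (finite-dimensional since $H$ is), so that $C^* = \widehat{H}^*$ and $B \rtimes_L (C^*)^{\textnormal{op}} = A \rtimes_H (\widehat{H}^*)^{\textnormal{op}} = R_H(A)$. You are in fact slightly more careful than the paper's one-line proof, which writes the specialization as ``$C:=\widehat{H}^*$'' (a slip in the bookkeeping of duals that your remark about $\widehat{H}$ versus $\widehat{H}^*$ correctly straightens out).
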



\subsection{Properties of reflective algebras} \label{sec:prop-ref}
In this part, we examine algebraic properties of reflective algebras and their categories of modules. 

\begin{proposition} \label{prop:RHA-props}
For a finite-dimensional quasitriangular Hopf algebra $H$ and a left $H$-comodule algebra $A$, we have the following facts about the reflective algebra $R_H(A)$ and its category of modules. 
\begin{enumerate}[\upshape (a)]


\item If $A$ is finite-dimensional, then $R_H(A)$ is finite-dimensional, and hence, $R_H(A)\text{-}\mathsf{fdmod}$ is a finite abelian category.

\smallskip

\item If $H$ is semisimple, and $A$ is finite-dimensional and semisimple, then $R_H(A)$ is semisimple.
\end{enumerate}
\end{proposition}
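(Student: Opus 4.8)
The plan is to treat the two parts separately, obtaining (a) directly from the crossed-product description of the reflective algebra and (b) from the categorical equivalence recorded in \eqref{eq:ECM-DH-2} together with Corollary~\ref{cor:ECMprops}(e). The virtue of routing (b) through the categorical statement, rather than attempting a direct algebraic argument, is that $R_H(A)=A\rtimes_H(\widehat{H}^*)^{\textnormal{op}}$ is a crossed product whose factor $\widehat{H}^*$ carries a \emph{twisted} multiplication (Lemma~\ref{lem:hatHstar}(a)), so that semisimplicity is not visible from the two factors alone; the reflective-center realization lets us bypass this.

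For part (a), I would begin by invoking Lemma~\ref{lem:cross-prod}, which identifies $R_H(A)$ with $A\otimes(\widehat{H}^*)^{\textnormal{op}}$ as an $(A,(\widehat{H}^*)^{\textnormal{op}})$-bimodule, and in particular as a $\Bbbk$-vector space. Since $\widehat{H}$ equals $H$ as a vector space, $\dim_\Bbbk\widehat{H}^*=\dim_\Bbbk H$ is finite, so $\dim_\Bbbk R_H(A)=(\dim_\Bbbk A)(\dim_\Bbbk H)$ is finite whenever $A$ is finite-dimensional. The second assertion is then immediate from the characterization of finite abelian categories recalled in Section~\ref{sec:monoidal}: the category of finite-dimensional modules over a finite-dimensional $\Bbbk$-algebra is finite abelian. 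I expect no real obstacle here, as this part is essentially bookkeeping.

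For part (b), I would deduce semisimplicity of the algebra $R_H(A)$ from semisimplicity of its module category. When $H$ is finite-dimensional and semisimple, $\cC:=H\text{-}\mathsf{fdmod}$ is a braided fusion category: it is finite and semisimple, and braided via the $R$-matrix by Lemma~\ref{lem:R-c}. When $A$ is finite-dimensional and semisimple, $\cM:=A\text{-}\mathsf{fdmod}$ is a finite semisimple left $\cC$-module category via the action of Lemma~\ref{lem:braid-Kmatr}(a). By Corollary~\ref{cor:ECMprops}(e), the reflective center $\cE_\cC(\cM)$ is then semisimple. Because the isomorphisms of Proposition~\ref{prop:ECM-DH} and Corollary~\ref{cor:isom-refl-alg} are the identity on underlying vector spaces, they restrict to an isomorphism $\cE_\cC(\cM)\cong R_H(A)\text{-}\mathsf{fdmod}$ on finite-dimensional objects, so $R_H(A)\text{-}\mathsf{fdmod}$ is semisimple. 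Finally, since $R_H(A)$ is finite-dimensional by part (a), the regular module ${}_{R_H(A)}R_H(A)$ lies in $R_H(A)\text{-}\mathsf{fdmod}$ and is thus completely reducible, which forces $R_H(A)$ to be semisimple as an algebra.

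The step requiring the most care is the transition in part (b) from the categorical statement to the algebraic one. Concretely, I must verify that $\cM=A\text{-}\mathsf{fdmod}$ genuinely satisfies the hypotheses of Corollary~\ref{cor:ECMprops}(e)—finiteness and semisimplicity as a $\cC$-module category, which hold because a finite-dimensional semisimple algebra has finitely many simple modules and complete reducibility—and that the equivalence of \eqref{eq:ECM-DH-2} restricts correctly to the finite-dimensional subcategories, so that semisimplicity of the module category can be converted, via the regular module and the finite-dimensionality from part (a), into semisimplicity of $R_H(A)$ itself.
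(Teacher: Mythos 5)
Your proposal is correct and follows exactly the paper's route: part (a) is deduced from Lemma~\ref{lem:cross-prod} and part (b) from Corollary~\ref{cor:ECMprops}(e) transported through the isomorphism \eqref{eq:ECM-DH-2}, which is precisely the paper's (much terser) proof. Your added care about restricting to finite-dimensional modules and passing from semisimplicity of the module category to semisimplicity of the algebra via the regular module simply fills in details the paper leaves implicit.
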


\begin{proof} 
Part (a) follows from Lemma~\ref{lem:cross-prod}. 
Part (b) follows from Corollary~\ref{cor:ECMprops}(e).
\end{proof}

\begin{example} \label{ex:specialA}
Consider the special cases of left $H$-comodule algebras $A$ below.
Take $A = \Bbbk$ to be the trivial left coideal subalgebra of $H$ with $\delta(1_\Bbbk) = 1_H \otimes 1_\Bbbk$. Then, 
\[
R_H(\Bbbk) \cong (\widehat{H}^*)^{\textnormal{op}}. 
\]
Here, $\lmod{R_H(\Bbbk)}$ is abelian and finite. Moreover, $R_H(\Bbbk)$ is semisimple when $H$ is semisimple.
\end{example}


\section{Modules over reflective algebras as braided module categories}
\label{sec:R-comod-RHA}
We maintain the setting and notation of Section~\ref{sec:setting-repr} here. The goal of this section is to upgrade the category isomorphisms of the previous section to isomorphisms of braided module categories.  Namely in Sections~\ref{sec:upgrade} and~\ref{sec:upgrade2}, we establish how \eqref{eq:ECM-DH} and \eqref{eq:ECM-DH-2}, respectively, can be extended to isomorphism of braided left $\cC$-module categories. We also obtain an $H$-comodule algebra structure and quantum $K$-matrix (i.e., quasitriangular structure) for the pertinent reflective algebra in Section~\ref{sec:upgrade2}. Next in Section~\ref{sec:univK}, we display a universal property for the reflective algebra of the trivial $H$-comodule algebra $\Bbbk$. Then in Section~\ref{sec:RA-ex}, we provide an explicit example of the results here for $H$ being the Drinfeld double of a finite group. 

\medskip

\noindent {\bf Standing notation.}  Along with the notation of Section~\ref{sec:setting-repr}, we collect some additional notation introduced in the previous section. 
\begin{itemize}
    \item ${}_{B}^{C}  \mathsf{DH}(L)$ is the category of $(L,B,C)$-Doi--Hopf modules from \Cref{sec:Doi--Hopf}, with $L$ a Hopf algebra,  with objects $(M, \ast, \varphi)$ for $(M, \ast)$ a left $B$-module, and $(M, \varphi)$ a left $C$-module.\medskip
    \item $\varphi(m):= m_{-1} \otimes m_0$ for $m \in M$.
    \medskip
    \item $\widehat{H}:=(\widehat{H}, \widehat{\Delta}, \widehat{\varepsilon}, \rightharpoonup)$ is the left $H$-module coalgebra from \Cref{def:hatH}; here $\widehat{\Delta}(h):= h_{\widehat{(1)}} \otimes   h_{\widehat{(2)}}$.

    \vspace{.05in}
    
    \item When $H$ is finite-dimensional, $\langle \hspace{0.02in}, \rangle$ is the algebra-coalgebra pairing between $\widehat{H}^*$, $\widehat{H}$. Here,
\begin{equation} \label{eq:hatHpair}
\langle \xi \zeta, h  \rangle = \langle \xi \otimes \zeta, \widehat{\Delta}(h) \rangle = \langle \xi, h_{\widehat{(1)}} \rangle  \langle \zeta, h_{\widehat{(2)}} \rangle,
\end{equation}
for $\xi, \zeta \in \widehat{H}^*$, $h \in \widehat{H}$.

\medskip

\item $R_H(A)$ is the reflective algebra from Section~\ref{sec:refl-alg}; it is equal to $A \otimes H^*$ as a vector space.
\end{itemize}

\subsection{Reflective centers are  Doi--Hopf modules as braided module categories} \label{sec:upgrade}

By \eqref{eq:ECM-DH}, we have the category isomorphism 
\[
\cE_{\lmod{H}}({\lmod{A}}) \; \cong \;{}_A^{\widehat{H}}  \mathsf{DH}(H).
\]
The goal of this subsection to describe explicitly the corresponding braided $(\lmod{H})$-module category structure of ${}_A^{\widehat{H}}  \mathsf{DH}(H)$. 
This will be akin to the isomorphism $\cZ(\lmod{H}) \; \overset{\otimes}{\cong} \; \lYD{H}$ of braided categories mentioned in \eqref{eq:Drin}.

\begin{lemma} \label{lem:ECM-DH-brmod}
The following statements hold. 
\begin{enumerate}[\upshape (a)]
    \item We have that  $\cE_{\lmod{H}}({\lmod{A}})$  is a left module category over $\lmod{H}$ as follows:
\[
\begin{array}{rl}
\act: \lmod{H} \;\times \;\cE_{\lmod{H}}({\lmod{A}}) &\longrightarrow \cE_{\lmod{H}}({\lmod{A}}) \\[.4pc]
\left( (Y, \cdot), \; (M, \ast, e^M) \right) & \mapsto 
\left( Y \otimes M, \; \widetilde{\ast}, \; e^{Y \otimes M}\right),
\end{array}
\]
for $a \; \widetilde{\ast} \; (y \otimes m) = (a_{[-1]} \cdot y) \otimes (a_{[0]} \ast m)$ with $a \in A$, $y \in Y$, $m \in M$. Here, $e^{Y \otimes M}$ is given by: 
\begin{align} \label{eq:ECM-Hmod}
e^{Y \otimes M}_X(x \otimes y \otimes m) =  \textstyle \sum_{i,j,k} (t_k g_j s_i \cdot x) \otimes (s_k t_i \cdot y) \otimes (p_j \ast m),
\end{align}
for some element $\sum_j g_j\otimes p_j\in H\otimes A$ independent of choice of $Y,M$.

\medskip

\item Further, the reflections $e^M$ equip $\cE_{\lmod{H}}({\lmod{A}})$ with the structure of a braided module category over $\lmod{H}$, where $$e^{\cE}_{X,(M,e^M)} := e^M_X,$$ 
for $X \in \lmod{H}$ and $(M,e^M) \in \cE_{\lmod{H}}({\lmod{A}})$. In particular,  $e_X^M$  is a braiding if and only if $K:=\sum_j g_j \otimes p_j$ is a $K$-matrix for $A$.
\end{enumerate}
\end{lemma}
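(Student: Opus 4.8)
The plan is to obtain both parts by specializing the general results of Section~\ref{sec:refl-cent} to $\cC = \lmod{H}$ and $\cM = \lmod{A}$, so that the only genuine work left is one explicit computation.

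For part~(a), I would first invoke Lemma~\ref{lem:ECM-modcat}, which already establishes that $\cE_{\lmod{H}}(\lmod{A})$ is a left $\lmod{H}$-module category with $Y \act (M,e^M) := (Y \act M,\, e^{Y \act M})$ and $e^{Y \act M}_X$ defined by \eqref{eq:brmod1}; the underlying action on $\lmod{A}$ is the module action $\widetilde{\ast}$ recorded in Lemma~\ref{lem:braid-Kmatr}(a). It then remains to extract the closed form \eqref{eq:ECM-Hmod}. Since $\cM = \lmod{A}$ sits over $\Vect$, the module associativity constraint $m$ is trivial, so \eqref{eq:brmod1} collapses to
\[
e^{Y \act M}_X = (c_{Y,X} \act \ide_M) \circ (\ide_Y \act e^M_X) \circ (c_{X,Y} \act \ide_M).
\]
I would then substitute the explicit braiding of $\lmod{H}$ from Lemma~\ref{lem:R-c}, namely $c_{X,Y}(x \otimes y) = \sum_i (t_i \cdot y) \otimes (s_i \cdot x)$, together with $e^M_X(x \otimes m) = \sum_j (g_j \cdot x) \otimes (p_j \ast m)$, and chain the three maps on a simple tensor $x \otimes y \otimes m$: the first map yields $\sum_i (t_i \cdot y) \otimes (s_i \cdot x) \otimes m$, applying $\ide_Y \act e^M_X$ gives $\sum_{i,j} (t_i \cdot y) \otimes (g_j s_i \cdot x) \otimes (p_j \ast m)$, and the final braiding produces exactly \eqref{eq:ECM-Hmod}. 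The data $\sum_j g_j \otimes p_j$ governing $e^M$ is recovered, independently of the test object $X$ and reused uniformly for every $Y$, by applying naturality of $e^M$ to the morphisms $l_x \colon H \to X$, $h \mapsto h \cdot x$ in $\lmod{H}$, exactly as in the proof of Proposition~\ref{prop:ECM-DH}.

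For part~(b), the braided module category structure is immediate from Proposition~\ref{prop:ECM-brmodcat}: specialized to $\cC = \lmod{H}$ and $\cM = \lmod{A}$, it gives that $e^{\cE}_{X,(M,e^M)} := e^M_X$ is a braiding making $\cE_{\lmod{H}}(\lmod{A})$ a braided left $\lmod{H}$-module category. The concluding ``if and only if'' assertion is then precisely Lemma~\ref{lem:braid-Kmatr}(b) transported into this setting: the reflection $e^M_X(x \otimes m) = \sum_j (g_j \cdot x) \otimes (p_j \ast m)$ defines a braiding on the module category $\lmod{A}$ exactly when the governing element $K := \sum_j g_j \otimes p_j \in H \otimes A$ is a quantum $K$-matrix for $A$.

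I expect the only real obstacle to be bookkeeping in the chained computation of part~(a)---keeping the three families of legs ($s_i,t_i$ from $c_{X,Y}$, the $g_j,p_j$ from $e^M$, and $s_k,t_k$ from $c_{Y,X}$) correctly ordered and correctly placed in the $X$- versus $Y$-tensorands---together with the conceptual point of justifying the $\sum_j g_j \otimes p_j$ notation. Indeed, the reflection of a general object of $\cE_{\lmod{H}}(\lmod{A})$ is governed by the coaction $\varphi(m) = e^M_H(1_H \otimes m)$ rather than by a fixed element of $H \otimes A$, so I would be explicit that the displayed formula is read through this reconstruction (the genuine element-of-$H \otimes A$ situation being exactly the quasitriangular case isolated in part~(b)).
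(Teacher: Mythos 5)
Your proposal is correct and follows essentially the same route as the paper: part (a) is obtained from Lemma~\ref{lem:ECM-modcat} plus the three-step chain computation that results from collapsing \eqref{eq:brmod1} under the trivial associativity constraint (the paper's displayed computation is exactly your chain, with the same placement of the three families $s_i,t_i$, $g_j,p_j$, $s_k,t_k$), and part (b) is quoted from Proposition~\ref{prop:ECM-brmodcat} together with Lemma~\ref{lem:braid-Kmatr}(b), just as you do. The one divergence is how the element $\sum_j g_j \otimes p_j$ is justified: the paper cites the reconstruction statement Lemma~\ref{lem:braid-Kmatr}(c), while you recover the data from the coaction $\varphi(m) = e^M_H(1_H \otimes m)$ via naturality applied to the maps $l_x \colon H \to X$, and you flag that for a single object of the reflective center this data is a coaction $M \to \widehat{H} \otimes M$ rather than a genuine element of $H \otimes A$. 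That caution is warranted: Lemma~\ref{lem:braid-Kmatr}(c) concerns braidings on $\lmod{A}$ that are natural in $M$ as well as in $X$, which is the situation of the second assertion of part (b), not of an individual object's reflection, so the paper's citation is looser than your treatment. Your reading also agrees with how the paper itself uses formula \eqref{eq:ECM-Hmod} afterwards --- in the proof of Proposition~\ref{prop:ECM-DH-brmod} the expression $\sum_j g_j \otimes (p_j \ast m)$ is precisely identified with $m_{-1} \otimes m_0$ --- so your version is, if anything, slightly more careful than the printed proof while buying the same conclusion.
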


\begin{proof}
Part (a) follows from Lemma~\ref{lem:ECM-modcat}. In particular, the formula for $\widetilde{\ast}$ is derived from Lemma~\ref{lem:braid-Kmatr}(a). Note that there exists an element $\sum_j g_j\otimes p_j \in H \otimes A$  satisfying
$e^M_X (x\otimes m)= \textstyle \sum_j (g_j\cdot x) \otimes (p_j\ast m)$ by Lemma~\ref{lem:braid-Kmatr}(c). Then the formula for $e^{Y\otimes M}$ holds as follows:
{\small
\begin{align*}
e^{Y\otimes M}_X(x \otimes y \otimes m)
& \overset{\textnormal{\eqref{eq:brmod1}}}{=} 
(c_{Y,X} \otimes \ide_M) (\ide_Y \otimes e_{X}^M) (c_{X,Y} \otimes \ide_M)(x \otimes y \otimes m)\\[.2pc]
& = 
\textstyle \sum_i (c_{Y,X} \otimes \ide_M) (\ide_Y \otimes e_{X}^M) ((t_i \cdot y) \otimes (s_i \cdot x) \otimes m)\\[.2pc]
& = 
\textstyle \sum_{i,j} (c_{Y,X} \otimes \ide_M) ((t_i \cdot y) \otimes (g_js_i \cdot x) \otimes (p_j \ast m))\\[.2pc]
& = 
\textstyle \sum_{i,j,k} (t_kg_js_i \cdot x) \otimes (s_kt_i \cdot y)  \otimes (p_j \ast m).
\end{align*}
}

Part (b) holds by Proposition~\ref{prop:ECM-brmodcat}, and Lemma~\ref{lem:braid-Kmatr}(b).
\end{proof}


Next, we use the braided module category structure of $\cE_{\lmod{H}}({\lmod{A}})$ in the lemma above to induce such a structure for ${}_A^{\widehat{H}}  \mathsf{DH}(H)$.

\begin{proposition} \label{prop:ECM-DH-brmod}
We have that
\[
\cE_{\lmod{H}}({\lmod{A}}) \; \overset{\textnormal{br.mod}}{\cong} \;{}_A^{\widehat{H}}  \mathsf{DH}(H),
\]
as braided left $({\lmod{H}})$-module categories, 
where:
\begin{enumerate}[\upshape (a)]
\item The left $({\lmod{H}})$-module category structure on ${}_A^{\widehat{H}}  \mathsf{DH}(H)$ is given by 
\[
\begin{array}{rl}
\blkact \;: \lmod{H} \;\times \;{}_A^{\widehat{H}}  \mathsf{DH}(H) &\longrightarrow {}_A^{\widehat{H}}  \mathsf{DH}(H) \\[.4pc]
\left( (Y, \cdot), \; (M, \ast, \varphi) \right) & \mapsto 
\left( Y \otimes M, \; \widetilde{\ast}, \; \widetilde{\varphi}\right),
\end{array}
\]
for $a \; \widetilde{\ast} \; (y \otimes m) = (a_{[-1]} \cdot y) \otimes (a_{[0]} \ast m)$ with $a \in A$, $y \in Y$, $m \in M$, and  for $\varphi(m):=m_{-1} \otimes m_0$,
\begin{align} \label{eq:DH-Hmod}
\widetilde{\varphi}(y \otimes m) = \textstyle \sum_{i,j} (t_j m_{-1} s_i) \otimes (s_j t_i \cdot y) \otimes m_0.
\end{align}

\smallskip

\item The braiding on ${}_A^{\widehat{H}}  \mathsf{DH}(H)$ is given by 
$$e^{\mathsf{DH}}_{X,(M, \ast, \varphi)}(x \otimes m) := (m_{-1} \cdot x) \otimes m_0,$$ 
for $X \in \lmod{H}$ and $(M, \ast, \varphi) \in {}_A^{\widehat{H}}  \mathsf{DH}(H)$, with $x \in X, m \in M$.
\end{enumerate}
\end{proposition}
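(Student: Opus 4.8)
The plan is to avoid re-proving anything from scratch and instead transport the braided module structure across the category isomorphism already constructed. Concretely, Proposition~\ref{prop:ECM-DH} supplies mutually inverse functors $F\colon \cE_{\lmod{H}}(\lmod{A}) \to {}_A^{\widehat{H}} \mathsf{DH}(H)$ and $G$ in the reverse direction, and by Lemma~\ref{lem:ECM-DH-brmod} the source $\cE_{\lmod{H}}(\lmod{A})$ carries a braided left $\lmod{H}$-module structure. I would therefore apply Proposition~\ref{prop:structuretransfer} to this equivalence to endow ${}_A^{\widehat{H}} \mathsf{DH}(H)$ with a braided left $\lmod{H}$-module structure for which $F$ and $G$ become braided module functors; this produces the isomorphism $\overset{\textnormal{br.mod}}{\cong}$ immediately. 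All that then remains is to compute the transported structures explicitly and to match them with the formulas in parts (a) and (b).

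Part (b) should fall out at once. By Proposition~\ref{prop:structuretransfer}(b) the transferred braiding is $e^{\mathsf{DH}}_{X,N} := F\big(e^{\cE}_{X,G(N)}\big)$, and since $F$ is the identity on underlying vector spaces and linear maps (it only repackages the third datum), this is literally the morphism $e^{G(N)}_X$. Writing $N = (M,\ast,\varphi)$ with $\varphi(m) = m_{-1}\otimes m_0$, Proposition~\ref{prop:functorG} gives $G(N) = (M,\ast,e^M)$ with $e^M_X(x\otimes m) = (m_{-1}\cdot x)\otimes m_0$, whence $e^{\mathsf{DH}}_{X,N}(x\otimes m) = (m_{-1}\cdot x)\otimes m_0$, exactly as claimed.

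For part (a), the underlying object $Y\otimes M$ and its $A$-action $\widetilde{\ast}$ are inherited unchanged from $Y\act G(N)$ in $\cE_{\lmod{H}}(\lmod{A})$, so only the induced $\widehat{H}$-comodule structure needs computation. By the definition of $F$ (Proposition~\ref{prop:functorF}), $\widetilde\varphi(z) = e^{Y\otimes M}_H(1_H\otimes z)$, where $e^{Y\otimes M}$ is the reflection attached to $Y\act G(N)$ and is given by \eqref{eq:ECM-Hmod}. The key auxiliary identity, obtained by evaluating the two expressions for $e^M_H(s_i\otimes m)$ on the regular module, is $\sum_j g_j s_i \otimes (p_j\ast m) = m_{-1}s_i\otimes m_0$ (one side from the form $e^M_X(x\otimes m)=\sum_j(g_j\cdot x)\otimes(p_j\ast m)$ of Lemma~\ref{lem:ECM-DH-brmod}(a), the other from $e^M_X(x\otimes m)=(m_{-1}\cdot x)\otimes m_0$, cf.\ \eqref{eq:eHM}). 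Setting $X=H$ and $x=1_H$ in \eqref{eq:ECM-Hmod}, then left-multiplying the first tensor factor by $t_k$ and substituting this identity into the $j$-sum, collapses the triple sum $\sum_{i,j,k}(t_k g_j s_i)\otimes(s_k t_i\cdot y)\otimes(p_j\ast m)$ to $\sum_{i,k}(t_k m_{-1} s_i)\otimes(s_k t_i\cdot y)\otimes m_0$, which is precisely \eqref{eq:DH-Hmod} after renaming $k\to j$.

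The hard part will be this last bookkeeping in part (a): the element $\sum_j g_j\otimes p_j$ furnished by Lemma~\ref{lem:ECM-DH-brmod}(a) is not a fixed tensor but encodes the coaction $\varphi$ of the object in question, and it appears sandwiched inside the product $t_k g_j s_i$. The essential point to get right is that, on the regular module, $g_j$ acts by left multiplication, so the auxiliary identity can be applied slotwise to replace $g_j$ by $m_{-1}$ while keeping the flanking factors $t_k$ and $s_i$ intact; once this is recognized, the collapse of indices is purely formal. Everything outside this computation is a direct consequence of Propositions~\ref{prop:ECM-DH} and~\ref{prop:structuretransfer}.
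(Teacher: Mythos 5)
Your proposal is correct and follows essentially the same route as the paper: transport the braided module structure of $\cE_{\lmod{H}}(\lmod{A})$ (Lemma~\ref{lem:ECM-DH-brmod}) across the isomorphism of Proposition~\ref{prop:ECM-DH} via Proposition~\ref{prop:structuretransfer}, then compute the induced coaction by substituting $\sum_j g_j \otimes (p_j \ast m) = m_{-1}\otimes m_0$ slotwise into \eqref{eq:ECM-Hmod} evaluated at $X=H$, $x=1_H$. Your handling of the key bookkeeping point --- that $g_j$ acts by left multiplication on the regular module, so the replacement $g_j \rightsquigarrow m_{-1}$ can be made inside the product $t_k g_j s_i$ --- matches the paper's computation exactly, and your explicit derivation of part (b) via $e^{\mathsf{DH}}_{X,N} = F\big(e^{G(N)}_X\big) = e^{G(N)}_X$ is just a spelled-out version of what the paper leaves as ``computations as in part (a).''
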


\begin{proof}
(a) By Proposition~\ref{prop:structuretransfer}(a), the action $\blkact$ is induced by the action $\act$ from Lemma~\ref{lem:ECM-DH-brmod}(a), the functor $F$ from Proposition~\ref{prop:functorF}, and its inverse $G$ from  Proposition~\ref{prop:functorG} as follows:
\[ 
\blkact \;: \lmod{H} \times {}_A^{\widehat{H}}  \mathsf{DH}(H) \overset{\textnormal{Id} \times G}{\longrightarrow} \lmod{H} \times \cE_{\lmod{H}}({\lmod{A}})
\overset{\act}{\longrightarrow}
\cE_{\lmod{H}}({\lmod{A}})
\overset{F}{\longrightarrow}
{}_A^{\widehat{H}}  \mathsf{DH}(H).
\]
The formula for $\widetilde{\ast}$ then follows  from Lemma~\ref{lem:ECM-DH-brmod}. Moreover, the formula for $\widetilde{\varphi}$  follows from the computations below:
{\small
\[
\begin{array}{ll}
\textstyle \sum_j g_j \otimes (p_j \ast m) \; = \; e_H^M(1_H \otimes m) \overset{\textnormal{Prop.\;\ref{prop:functorG}}}{=} m_{-1} \otimes  m_0 \; &\quad \text{(for $\textnormal{Id} \times G$ applied to $\varphi$)},\\[.6pc]
 \Rightarrow \; \; e_H^{Y \otimes M}(1_H \otimes y \otimes m) \; \overset{\textnormal{\eqref{eq:ECM-Hmod}}}{=} \; \textstyle \sum_{i,j} t_j m_{-1} s_i \otimes (s_j t_i \cdot y) \otimes m_0 \; &\quad \text{(then applying $\act$)},\\[.6pc]
 \therefore \; \; \widetilde{\varphi}(y \otimes m) \; \overset{\textnormal{Prop.\;\ref{prop:functorF}}}{=} \;  e_H^{Y \otimes M}(1_H \otimes y \otimes m)  \; &\quad \text{(finally applying $F$)}.
\end{array}
\]
}

\smallskip

(b) This follows from Proposition~\ref{prop:structuretransfer}(b), Lemma~\ref{lem:ECM-DH-brmod}(b), and computations as in part (a).
\end{proof}


\subsection{Reflective algebras as $H$-comodule algebras with quantum $K$-matrices} \label{sec:upgrade2}
By \eqref{eq:ECM-DH-2}, we have the category isomorphism
\[
\cE_{\lmod{H}}({\lmod{A}})
\; \cong \; \lmod{R_H(A)}.
\]
The goal of this subsection is to describe explicitly the corresponding braided $\lmod{H}$ module category structure of 
$\lmod{R_H(A)}$. This will be akin to the isomorphism of braided categories, $\cZ(\lmod{H}) \; \overset{\otimes}{\cong} \; \lmod{\Drin(H)}$ mentioned in \eqref{eq:Drin}.

\begin{theorem} \label{thm:ECM-DH-brmod-2}
For a finite-dimensional quasitriangular Hopf algebra $H$ and a left $H$-comodule algebra $A$, we have that
\begin{equation}
\label{eq:br-isom-E-RHA}
\cE_{\lmod{H}}({\lmod{A}}) \; \overset{\textnormal{br.mod}}{\cong} \; 
\lmod{R_H(A)},
\end{equation}
as braided left $({\lmod{H}})$-module categories, 
where:
\begin{enumerate}[\upshape (a)]
\item The left $({\lmod{H}})$-module category structure on $\lmod{R_H(A)}$ is given by 
\[
\begin{array}{rl}
\grayact : \lmod{H} \;\times \;\lmod{R_H(A)}&\longrightarrow \lmod{R_H(A)} \\[.4pc]
\left( (Y, \cdot), \; (M, \ast, \star) \right) & \mapsto 
\left( Y \otimes M, \; \widetilde{\ast}, \; \widetilde{\star}\right),
\end{array}
\]
for $a \; \widetilde{\ast} \; (y \otimes m) = (a_{[-1]} \cdot y) \otimes (a_{[0]} \ast m)$ with $a \in A$, $y \in Y$, $m \in M$. Also  for $\xi \in (\widehat{H}^*)^{\textnormal{op}}$:
\begin{equation} \label{eq:RHA-Hmod}
\xi \; \widetilde{\star}\; (y \otimes m) = \textstyle \sum_{i,j,d} \langle \xi,\; t_j h_d s_i \rangle (s_j t_i \cdot y) \otimes (\xi_d \star m).
\end{equation}
Here, $\{h_d, \; \xi_d\}_d$ is a dual basis of $H$.
\smallskip

\item The braiding on $\lmod{R_H(A)}$ is given 
by 
$$e^{R_H}_{X,(M, \star)}(x \otimes m) := \textstyle \sum_d (h_d \cdot x) \otimes (\xi_d \star m),$$ 
for $X \in \lmod{H}$ and $(M, \star) \in \lmod{R_H(A)}$, with $x \in X, m \in M$.
\end{enumerate}
\end{theorem}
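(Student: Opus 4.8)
The plan is to obtain \eqref{eq:br-isom-E-RHA} by \emph{transferring} the braided $(\lmod{H})$-module category structure already built on ${}_A^{\widehat{H}} \mathsf{DH}(H)$ in Proposition~\ref{prop:ECM-DH-brmod} across the category isomorphism $\Omega := \Omega_{H;A,\widehat{H}^*}$ of Corollary~\ref{cor:isom-refl-alg}. Concretely, I would apply Proposition~\ref{prop:structuretransfer} with $F := \Omega$ and quasi-inverse $G := \Omega^{-1}$: part~(a) endows $\lmod{R_H(A)}$ with the $(\lmod{H})$-module structure $Y \grayact N := \Omega(Y \blkact \Omega^{-1}(N))$, and part~(b) transports the braiding to $e^{R_H}_{X,N} := \Omega(e^{\mathsf{DH}}_{X,\Omega^{-1}(N)})$, with $F$ and $G$ automatically preserving the braiding. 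Composing this with the braided module isomorphism $\cE_{\lmod{H}}(\lmod{A}) \overset{\textnormal{br.mod}}{\cong} {}_A^{\widehat{H}} \mathsf{DH}(H)$ of Proposition~\ref{prop:ECM-DH-brmod} then yields \eqref{eq:br-isom-E-RHA} directly; crucially, no braided-module axiom needs to be re-checked, since the transfer principle delivers compatibility for free. All that remains is to make the transported formulas in (a) and (b) explicit.

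For the explicit computation I would use that $\Omega$ and $\Omega^{-1}$ act as the identity on underlying vector spaces and on morphisms (Lemma~\ref{lem:C-comod} and Proposition~\ref{prop:isom-DH-double}): they merely repackage a $\widehat{H}$-comodule structure $\varphi$ as its dual $(\widehat{H}^*)^{\textnormal{op}}$-action via $\xi \star m = \langle \xi, m_{-1}\rangle m_0$, with inverse dictionary $\varphi(m) = \sum_d h_d \otimes (\xi_d \star m)$ for the dual basis $\{h_d,\xi_d\}_d$ of $\widehat{H}\,(=H)$. To read off $\widetilde\star$ on $Y \otimes M$, I would start from $N = (M,\star)$, pass to the Doi--Hopf module $(M,\ast,\varphi)$ via $\Omega^{-1}$, apply $\blkact$ to obtain the coaction $\widetilde\varphi$ of \eqref{eq:DH-Hmod}, and finally dualize again through Lemma~\ref{lem:C-comod}. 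Substituting $m_{-1}\otimes m_0 = \sum_d h_d \otimes (\xi_d \star m)$ into $\xi\,\widetilde\star\,(y\otimes m) = \langle \xi, (\widetilde\varphi)_{-1}(y\otimes m)\rangle\,(\widetilde\varphi)_0(y\otimes m)$ should produce exactly \eqref{eq:RHA-Hmod}. Formula~(b) is obtained the same way: $e^{R_H}_{X,N}(x\otimes m) = e^{\mathsf{DH}}_{X,\Omega^{-1}(N)}(x\otimes m) = (m_{-1}\cdot x)\otimes m_0$, and the same dual-basis substitution gives $\sum_d (h_d\cdot x)\otimes(\xi_d \star m)$.

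I expect the only genuine work to be bookkeeping in this dualization step, where one must pass correctly between the $\widehat{H}$-comodule language of Proposition~\ref{prop:ECM-DH-brmod} and the $(\widehat{H}^*)^{\textnormal{op}}$-module language of $R_H(A)$, keeping track of the pairing \eqref{eq:hatHpair} and the dual-basis identities \eqref{eq:dual-iden}. The conceptual content -- that the isomorphism is simultaneously module-compatible and braided -- comes free from Proposition~\ref{prop:structuretransfer}; the main obstacle is simply ensuring that the indices $i,j,d$ in \eqref{eq:RHA-Hmod} land in the correct tensor slots after inserting the dual-basis expansion of $\varphi$ into \eqref{eq:DH-Hmod}.
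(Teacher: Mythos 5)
Your proposal is correct and follows essentially the same route as the paper: the paper also obtains the result by transferring the braided $(\lmod{H})$-module structure of ${}_A^{\widehat{H}}\mathsf{DH}(H)$ from Proposition~\ref{prop:ECM-DH-brmod} across $\Omega = \Omega_{H;A,\widehat{H}^*}$ via Proposition~\ref{prop:structuretransfer}, and then makes the formulas explicit by the same dual-basis dictionary of Lemma~\ref{lem:C-comod} applied to \eqref{eq:DH-Hmod}. Your computation of $\widetilde\star$ and of $e^{R_H}$ matches the paper's step-by-step derivation exactly.
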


\begin{proof}
The braided isomorphism between $\cE_{\lmod{H}}({\lmod{A}})$ and  ${}_A^{\widehat{H}}  \mathsf{DH}(H)$ follows from Proposition~\ref{prop:ECM-DH-brmod}. To establish the braided isomorphism between ${}_A^{\widehat{H}} \mathsf{DH}(H)$ and $\lmod{R_H(A)}$, see the work below.

\smallskip

(a) According to Proposition~\ref{prop:structuretransfer}(a), the action $\grayact$ is induced by the action $\blkact$ from Proposition~\ref{prop:ECM-DH-brmod}(a), the functors $\Omega$ and its inverse from Corollary~\ref{cor:isom-refl-alg} as such:
\[ 
\grayact: \lmod{H} \times \lmod{R_H(A)}\overset{\textnormal{Id} \times \Omega^{-1}}{\longrightarrow} \lmod{H} \times {}_A^{\widehat{H}}  \mathsf{DH}(H) 
\overset{\blkact}{\longrightarrow}
{}_A^{\widehat{H}}  \mathsf{DH}(H) 
\overset{\Omega}{\longrightarrow}
\lmod{R_H(A)}.
\]
The formula for $\widetilde{\ast}$ follows  from Lemma~\ref{lem:ECM-DH-brmod}. The formula for $\widetilde{\star}:= \widetilde{\star}_{\widetilde{\varphi}_\star}$  from the computations below:
{\small
\[
\begin{array}{ll}
m_{-1} \otimes m_0 \; = \; \varphi(m) \; \overset{\textnormal{Lem.\;\ref{lem:C-comod}}}{=} \; \textstyle \sum_d h_d \otimes (\xi_d \star m)  \; &\quad \text{(for $\textnormal{Id} \otimes \Omega^{-1}$ applied to $\star$)},\\[.6pc]
 \Rightarrow \; \; \widetilde{\varphi}_\star(y \otimes m) \; \overset{\textnormal{\eqref{eq:DH-Hmod}}}{=} \; \textstyle \sum_{i,j,d} (t_j h_d s_i) \otimes (s_j t_i \cdot y) \otimes (\xi_d \star m) \; &\quad \text{(then applying $\blkact$)},\\[.6pc]
 \therefore \; \; \xi \; \widetilde{\star} \; (y \otimes m) \; \overset{\textnormal{Lem.\;\ref{lem:C-comod}}}{=} \; \textstyle \sum_{i,j,d} \langle \xi, t_j h_d s_i \rangle (s_j t_i \cdot y) \otimes (\xi_d \star m)  \; &\quad \text{(finally applying $\Omega$)}.
\end{array}
\]
}

\smallskip

(b) This follows from Propositions~\ref{prop:structuretransfer}(b) and~\ref{prop:ECM-DH-brmod}(b), and computations as in part (a).
\end{proof}

Now we obtain a quasitriangular structure for the reflective algebra $R_H(A)$.

\begin{corollary} 
\label{cor:RHA-comod}
For a finite-dimensional quasitriangular Hopf algebra $H$ and a left $H$-comodule algebra $A$, we have the statements below about the reflective algebra $R_H(A)$.
\begin{enumerate}[\upshape (a)]
\item  $R_H(A)$ is a left $H$-comodule algebra with left $H$-coaction $\delta_{\textnormal{ref}}$ on $R_H(A)$ given by:
\begin{align*}
\delta_{\textnormal{ref}}(a) &:= a_{[-1]} \otimes a_{[0]},\\[.3pc]
\delta_{\textnormal{ref}}(\xi) &:= \textstyle \sum_{i,j,d} \langle \xi,\;  t_j h_d s_i \rangle  s_j t_i \; \otimes \; \xi_d \quad (=: \xi_{[-1]} \otimes \xi_{[0]}),\\[.3pc]
\delta_{\textnormal{ref}}(a \; \xi)&:= \delta_{\textnormal{ref}}(a)\; 
\delta_{\textnormal{ref}}(\xi) \quad (=: a_{[-1]} \xi_{[-1]} \otimes a_{[0]} \xi_{[0]}), 
\end{align*}
for $a \in A$ and $\xi \in (\widehat{H}^*)^{\textnormal{op}}$. Here, recall that $\sum_i s_i \otimes t_i$ is the $R$-matrix of $H$, and that $\{h_d, \xi_d\}_d$ is a dual basis of $H$.

\medskip

\item $R_H(A)$ is quasitriangular (as an $H$-comodule algebra) with $K$-matrix:
\begin{equation}
\label{eq:K-A}
K_{\textnormal{ref}}(A) := \textstyle \sum_d h_d \otimes \xi_d \; \in H \otimes (\widehat{H}^*)^{\textnormal{op}} \; \subset H \otimes R_H(A). 
\end{equation}
\end{enumerate}
\end{corollary}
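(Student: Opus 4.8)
The plan is to deduce both statements from the braided $\lmod{H}$-module category isomorphism $\cE_{\lmod{H}}(\lmod{A}) \overset{\textnormal{br.mod}}{\cong} \lmod{R_H(A)}$ of Theorem~\ref{thm:ECM-DH-brmod-2}, together with the dictionary of Lemma~\ref{lem:braid-Kmatr} between braided $\lmod{H}$-module structures on categories of modules and quasitriangular comodule algebra structures. The key observation is that the transported action $\grayact$ and braiding $e^{R_H}$ recorded in Theorem~\ref{thm:ECM-DH-brmod-2}(a),(b) already have exactly the shape of the structures appearing in Lemma~\ref{lem:braid-Kmatr}(a),(b); it then remains to read off $\delta_{\textnormal{ref}}$ and $K_{\textnormal{ref}}(A)$ and to verify the relevant axioms, most of which come for free.

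For part (a), I would define $\delta_{\textnormal{ref}}(r) := r\,\widetilde{\ast}\,(1_H \otimes 1_{R_H(A)}) \in H \otimes R_H(A)$, taking $Y = H$ the regular module and $M = R_H(A)$ the regular module in the action $\grayact$ of Theorem~\ref{thm:ECM-DH-brmod-2}(a). This is manifestly linear and well defined on all of $R_H(A)$. Evaluating on the generating subalgebras $A$ and $(\widehat{H}^*)^{\textnormal{op}}$ recovers the stated formulas: the $A$-part is immediate, while the $(\widehat{H}^*)^{\textnormal{op}}$-part is exactly \eqref{eq:RHA-Hmod} with $y = 1_H$ and $m = 1$. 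Since $\widetilde{\ast}$ is a genuine $R_H(A)$-action and the generator formulas give $r\,\widetilde{\ast}\,(h \otimes s) = (r_{[-1]}\,h)\otimes(r_{[0]}\,s)$ for $r$ in either generating subalgebra, the identity $(rr')\,\widetilde{\ast}\,(1_H \otimes 1) = r\,\widetilde{\ast}\,(r'\,\widetilde{\ast}\,(1_H \otimes 1))$ forces $\delta_{\textnormal{ref}}(rr') = \delta_{\textnormal{ref}}(r)\,\delta_{\textnormal{ref}}(r')$, so $\delta_{\textnormal{ref}}$ is an algebra map, in agreement with the third displayed formula; consequently $\widetilde{\ast}$ acts on every $Y \otimes M$ by $r\,\widetilde{\ast}\,(y \otimes m) = (r_{[-1]} \cdot y)\otimes(r_{[0]} \ast m)$. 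It then remains to check that $\delta_{\textnormal{ref}}$ is coassociative and counital; here I would exploit that, per the standing hypotheses of Section~\ref{sec:rep-refl-cent}, the module associativity and unit constraints of $\grayact$ are the trivial ones of $\Vect$. Requiring the (identity) associator $(X \otimes Y)\otimes M = X \otimes (Y \otimes M)$ to be an $R_H(A)$-module map forces $(\Delta \otimes \ide_H)\delta_{\textnormal{ref}} = (\ide_H \otimes \delta_{\textnormal{ref}})\delta_{\textnormal{ref}}$, and requiring the (identity) unit constraint $\Bbbk \otimes M = M$ to be an $R_H(A)$-module map forces $\varepsilon(r_{[-1]})\,r_{[0]} = r$, using that $H$ acts on the unit object $\Bbbk$ through $\varepsilon$. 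This establishes (a).

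For part (b), part (a) shows that the braided $\lmod{H}$-module category $\lmod{R_H(A)}$ of Theorem~\ref{thm:ECM-DH-brmod-2} is precisely the one attached to the comodule algebra $R_H(A)$ in Lemma~\ref{lem:braid-Kmatr}(a). Hence its braiding $e^{R_H}_{X,(M,\star)}(x \otimes m) = \sum_d (h_d \cdot x)\otimes(\xi_d \star m)$ from Theorem~\ref{thm:ECM-DH-brmod-2}(b) is a braiding for this module category, and by Lemma~\ref{lem:braid-Kmatr}(c) it is given by the action of a necessarily invertible element $K \in H \otimes R_H(A)$. Comparing with the template $e_{X,M}(x \otimes m) = \sum_i (g_i \cdot x)\otimes(p_i \ast m)$ of Lemma~\ref{lem:braid-Kmatr}(b) identifies $K = K_{\textnormal{ref}}(A) = \sum_d h_d \otimes \xi_d$ as in \eqref{eq:K-A}, and Lemma~\ref{lem:braid-Kmatr}(b) guarantees that this $K$ satisfies the quantum $K$-matrix axioms \eqref{eq:K2}--\eqref{eq:K1}. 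Thus $R_H(A)$ is a quasitriangular $H$-comodule algebra with $K$-matrix $K_{\textnormal{ref}}(A)$.

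The main obstacle, such as it is, lies entirely in part (a): one must confirm that the abstractly transported action really has the comodule tensor-product form of Lemma~\ref{lem:braid-Kmatr}(a) — with $H$ acting only on the left factor and $R_H(A)$ acting through a coaction — and that the associator and unit constraints are genuinely trivial, so that the reconstruction yields honest rather than twisted coassociativity and counitality. Everything else is formal; in particular the quasitriangularity identities \eqref{eq:QT1}--\eqref{eq:QT5} enter only implicitly through Lemma~\ref{lem:braid-Kmatr}, and a direct verification of \eqref{eq:K2}--\eqref{eq:K1} for $K_{\textnormal{ref}}(A)$, while possible, is exactly the laborious route this argument is designed to avoid.
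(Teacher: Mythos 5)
Your proposal is correct and follows essentially the same route as the paper: the paper likewise reads off $\delta_{\textnormal{ref}}$ from the formulas for $\widetilde{\ast}$ and $\widetilde{\star}$ in Theorem~\ref{thm:ECM-DH-brmod-2}(a) applied to regular modules at identity elements, concludes from the $\lmod{H}$-module category isomorphism \eqref{eq:br-isom-E-RHA} that $\delta_{\textnormal{ref}}$ is a comodule algebra map, and obtains (b) from Lemma~\ref{lem:braid-Kmatr}(b) together with Theorem~\ref{thm:ECM-DH-brmod-2}(b). The only difference is one of detail: you spell out the reconstruction steps (multiplicativity from the action axiom, coassociativity and counitality from the trivial associator and unit constraints) that the paper asserts and defers to a direct verification in the ArXiv version.
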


\begin{proof}
(a) Since \eqref{eq:br-isom-E-RHA} is an isomorphism of 
left $\lmod{H}$-module categories, 
\begin{equation} \label{eq:delta-ref}
r \cdot (x \otimes m) = \delta_{\textnormal{ref}}(r) (x \otimes m),
\end{equation}
for all $r \in R_H(A)$, $x \in X$, $m \in M$
for some $\delta_{\textnormal{ref}} : R_H(A) \to H \otimes R_H(A)$ comodule algebra map. The formulas for $\delta_{\textnormal{ref}}(a)$ and $\delta_{\textnormal{ref}}(\xi)$ follow from the formulas for $\widetilde{\ast}$ and $\widetilde{\star}$, respectively, in Theorem~\ref{thm:ECM-DH-brmod-2}(a), 
applied to the left regular modules $X=H, M=A$ and the elements $x=1_H, m=1_A$.

(b) This follows from Lemma~\ref{lem:braid-Kmatr}(b) and Theorem~\ref{thm:ECM-DH-brmod-2}(b).
In the ArXiv version 1 of this article it is shown directly that
$\delta_{\textnormal{ref}}$ defines an $H$-comodule algebra structure and that the axioms \eqref{eq:K2}-\eqref{eq:K1} hold for $K_{\textnormal{ref}}(A)$.
\end{proof}

\begin{example} 
\label{ex:specialA2}
Corollary \ref{cor:RHA-comod} implies that the isomorphism $R_H(\Bbbk) \cong (\widehat{H}^*)^{\textnormal{op}}$ from Example \ref{ex:specialA} 
is an isomorphism of $H$-comodule algebras.
\end{example}

\begin{example} \label{ex:cocom}
When $H$ is cocommutative, it is quasitriangular with $R = 1_H \otimes 1_H$. Here, $R_H(A)$ is a left $H$-comodule algebra, where for $a \in A$ and  $\xi \in (\widehat{H}^*)^{\textnormal{op}}$, we have $\delta_{\textnormal{ref}}(a) = a_{[-1]} \otimes a_{[0]}$ (identified with $a_{[-1]} \otimes (a_{[0]} \otimes \widehat{\varepsilon})$ in $H \otimes R_H(A)$)
and
\[\delta_{\textnormal{ref}}(\xi) = \textstyle \sum_{i,j,d} \langle \xi, h_d  \rangle  \otimes \; \xi_d \overset{\textnormal{\eqref{eq:Hopfpair}}}{=} \xi
\qquad \text{(identified with $1_H \otimes (1_A \otimes \xi)$ in $H \otimes R_H(A)$)}.\]
\end{example}


\subsection{Universality of the reflective algebra $R_H(\Bbbk)$}
\label{sec:univK}

In this part, we show that the reflective algebra $R_H(\Bbbk)$ arises as an initial object of the category of quasitriangular left $H$-comodule algebras. Here, we assume that $H$ is finite-dimensional.

\begin{definition} \label{def:H-QT} Let ${}^H \mathsf{QT}$
be the category of quasitriangular left $H$-comodule algebras. Namely, 
\begin{enumerate}[(a)]
\item Objects are pairs, $(Q, K)$, where $Q$ is a left $H$-comodule algebra, 
and $K:=K(Q) \in H \otimes Q$ is a quantum $K$-matrix for $Q$, and 
\smallskip
\item A morphism from $(Q_1,K_1)$ to $(Q_2,K_2)$ is a linear map $\phi: Q_1 \to Q_2$ that is both a  left $H$-comodule morphism and an algebra morphism, such that $K_2 = (\ide_H \otimes \phi)(K_1)$.
\end{enumerate}
\end{definition}

Indeed, if $(Q,K) \in {}^H \mathsf{QT}$, then the identity morphism $\ide_{(Q,K)}$ is equal to $\ide_Q$ because $\ide_Q$ is a left $H$-comodule algebra morphism and $K = (\ide_H \otimes \ide_Q)(K)$. Also if we have morphisms $\phi_1:(Q_1,K_1) \to (Q_2,K_2)$ and $\phi_2: (Q_2,K_2) \to (Q_3,K_3)$ in  ${}^H \mathsf{QT}$, then $\phi_2 \phi_1: (Q_1,K_1) \to (Q_3,K_3)$ is in  ${}^H \mathsf{QT}$ since it is a left $H$-comodule algebra morphism and $(\ide_H \otimes \phi_2 \phi_1)(K_1) = (\ide_H \otimes \phi_2)(K_2) = K_3$.

\medskip

Examples of objects of ${}^H \mathsf{QT}$ include the pairs $(R_H(A), K_{\textnormal{ref}}(A))$, for the reflective algebra $R_H(A)$ of $A$ from Section~\ref{sec:refl-alg}, with $K$-matrix $K_{\textnormal{ref}}(A)$ given in \eqref{eq:K-A}. 

\medskip

Now the main result of this section is given below. 

\begin{theorem}
\label{thm:init-obj}
When $H$ is a finite-dimensional quasitriangular Hopf algebra over $\Bbbk$, we have that $(R_H(\Bbbk), K_{\textnormal{ref}}(\Bbbk))$ is an initial object of ${}^H \mathsf{QT}$.
\end{theorem}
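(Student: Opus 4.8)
The plan is to show that for every object $(Q,K)$ of ${}^H\mathsf{QT}$, with $K=\sum_i g_i\otimes p_i\in H\otimes Q$, there is exactly one morphism $(R_H(\Bbbk),K_{\textnormal{ref}}(\Bbbk))\to (Q,K)$. Recall from Example~\ref{ex:specialA} that $R_H(\Bbbk)\cong(\widehat H^*)^{\textnormal{op}}$ as an algebra and that $K_{\textnormal{ref}}(\Bbbk)=\sum_d h_d\otimes\xi_d$. For a morphism $\phi$ the $K$-matrix condition reads $\sum_d h_d\otimes\phi(\xi_d)=(\ide_H\otimes\phi)(K_{\textnormal{ref}}(\Bbbk))=K$; since $\{h_d\}_d$ is a basis of $H$, this forces $\phi(\xi_d)=\sum_i\langle\xi_d,g_i\rangle p_i$, and as $\{\xi_d\}_d$ is a basis of $\widehat H^*=R_H(\Bbbk)$ it pins down $\phi$ uniquely as the linear map
\[
\phi\colon (\widehat H^*)^{\textnormal{op}}\longrightarrow Q,\qquad \phi(\xi):=(\xi\otimes\ide_Q)(K)=\textstyle\sum_i\langle\xi,g_i\rangle p_i .
\]
Thus uniqueness is immediate and uses only the $K$-matrix compatibility; moreover $K=(\ide_H\otimes\phi)(K_{\textnormal{ref}}(\Bbbk))$ holds by construction via \eqref{eq:dual-iden}. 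It remains to verify that $\phi$ is an algebra map and a left $H$-comodule map, so that it is a genuine morphism in ${}^H\mathsf{QT}$.

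For unitality I would apply $(\varepsilon\otimes\ide_Q\otimes\ide_Q)$ to axiom \eqref{eq:K2}. The left-hand side collapses to $K$, while the right-hand side simplifies, using $(\varepsilon\otimes\ide)R=(\ide\otimes\varepsilon)R=1_H$ from \eqref{eq:QT4} and the same identities for $R^{-1}$, to $K\,(1_H\otimes(\varepsilon\otimes\ide_Q)(K))$. Invertibility of $K$ then gives $(\varepsilon\otimes\ide_Q)(K)=1_Q$, that is $\phi(\widehat\varepsilon)=1_Q$, where $\widehat\varepsilon=\varepsilon$ is the unit of $(\widehat H^*)^{\textnormal{op}}$ by \Cref{def:hatH}.

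The comodule-map property is a short direct computation. Writing $\delta_Q$ for the coaction of $Q$, we have $\delta_Q(\phi(\xi))=\sum_i\langle\xi,g_i\rangle\,(p_i)_{[-1]}\otimes(p_i)_{[0]}$, which by axiom \eqref{eq:K3} equals $\sum_{j,k,l}\langle\xi,\,t_j g_k s_l\rangle\,s_j t_l\otimes p_k$. On the other side, expanding $(\ide_H\otimes\phi)(\delta_{\textnormal{ref}}(\xi))$ from the formula for $\delta_{\textnormal{ref}}$ in Corollary~\ref{cor:RHA-comod}(a) and collapsing the dual-basis sum via \eqref{eq:dual-iden} yields the same expression. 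Hence $\delta_Q\circ\phi=(\ide_H\otimes\phi)\circ\delta_{\textnormal{ref}}$.

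The main work is the algebra-map property. Since the multiplication of $(\widehat H^*)^{\textnormal{op}}=(\widehat H^{\textnormal{cop}})^*$ is dual to $\widehat\Delta$, a pairing argument (using nondegeneracy of the pairing and the arbitrariness of $\xi,\zeta$) shows that $\phi$ being multiplicative is equivalent to the single identity
\[
(\widehat\Delta\otimes\ide_Q)(K)=K_{23}K_{13}\qquad\text{in }H\otimes H\otimes Q .
\]
Substituting $\widehat\Delta=\omega\circ\Delta$ from \eqref{eq:omega} and axiom \eqref{eq:K2} reduces this to $(\omega\otimes\ide_Q)(K_{23}R_{21}K_{13}R_{21}^{-1})=K_{23}K_{13}$, a concrete identity in $H\otimes H\otimes Q$. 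This is the main obstacle: it is routine but lengthy, verified by repeatedly invoking the quasitriangularity relations \eqref{eq:QT1}--\eqref{eq:QT3} and the antipode identities in \eqref{eq:QT5} to move the $R$-matrix legs produced by $\omega$ past those of $R_{21}$ and $R_{21}^{-1}$ and cancel them. It is precisely here that the transmuted coproduct $\widehat\Delta$ is designed to absorb exactly the $R$-matrix twist appearing in \eqref{eq:K2}, so that $K$ satisfies a ``grouplike'' identity for $\widehat\Delta$. Once this identity is established, $\phi$ is an algebra map, hence a morphism in ${}^H\mathsf{QT}$, and the initiality of $(R_H(\Bbbk),K_{\textnormal{ref}}(\Bbbk))$ follows.
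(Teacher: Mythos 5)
Your proposal is correct and follows essentially the same route as the paper: the same map $\xi\mapsto(\xi\otimes\ide_Q)(K)$, uniqueness forced by the $K$-matrix compatibility and the dual basis, the comodule property from \eqref{eq:K3} together with \eqref{eq:dual-iden}, and multiplicativity from \eqref{eq:K2} plus the transmuted coproduct --- your tensor-form identity $(\widehat{\Delta}\otimes\ide_Q)(K)=K_{23}K_{13}$ is exactly the dualization of the paper's paired computation of $\kappa(\xi\zeta)$, and the $R$-matrix cancellations you invoke (consequences of \eqref{eq:QT1}, \eqref{eq:QT4}, \eqref{eq:QT5}) are precisely the ones the paper executes. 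The only differences are cosmetic: you check unit preservation explicitly (which the paper leaves implicit), while you leave the final cancellation as a described sketch where the paper writes it out in full.
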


\begin{proof}
For an arbitrary object $(Q, K) \in {}^H \mathsf{QT}$, our task is to produce a unique morphism 
$$\kappa:= \kappa_{(Q,K)}: (R_H(\Bbbk), K_{\textnormal{ref}}(\Bbbk)) \to (Q, K)$$ in ${}^H \mathsf{QT}$. Towards this, recall that $R_H(\Bbbk) \cong (\widehat{H}^*)^{\textnormal{op}}$ as left $H$-comodule algebras [Example~\ref{ex:specialA2}]. So, $R_H(\Bbbk) \cong H^*$ as vector spaces [\Cref{def:hatH}]. Now we use the element $K:= \sum_i g_i \otimes p_i \in H \otimes Q$ to yield a linear map:
\begin{equation} \label{eq:kappa}
\kappa:= \kappa_{(Q,K)}: R_H(\Bbbk) \cong H^* \to Q, \quad \xi \mapsto \textstyle \sum_i \langle \xi, g_i \rangle \; p_i.
\end{equation}
It now remains to verify the following conditions for the linear map in \eqref{eq:kappa}.
\begin{enumerate}[(i)]
\item It is a left $H$-comodule morphism.
\smallskip

\item It is an algebra morphism; 
\smallskip

\item $K = (\ide_H \otimes \kappa)(K_{\textnormal{ref}}(\Bbbk))$.
\smallskip

\item Uniqueness: If $\kappa': H^* \to Q$ is a linear map satisfying (i)--(iii), then $\kappa' = \kappa$.
\end{enumerate}

\smallskip

Towards (i), we have that the left $H$-comodule structure on $R_H(\Bbbk) \cong (\widehat{H}^*)^{\textnormal{op}}$ is given by 
\[
\delta_{\textnormal{ref}}(\xi) = \textstyle \sum_{i,j,d} \langle \xi,\;  t_j h_d s_i \rangle  s_j t_i \; \otimes \; \xi_d 
\]
for $\xi \in (\widehat{H}^*)^{\textnormal{op}}$ by Corollary \ref{cor:RHA-comod}(a). Here, $\sum_i s_i \otimes t_i$ is the $R$-matrix of $H$, and $\{h_d, \xi_d\}_d$ is a dual basis of $H$. Now \eqref{eq:kappa} is a left $H$-comodule morphism as shown below:
{\small
\begin{align*}
\delta_Q(\kappa(\xi)) \; &= \; 
\textstyle \sum_i \langle \xi, g_i \rangle \; (p_i)_{[-1]}  \otimes  (p_i)_{[0]} 
  \overset{\textnormal{\eqref{eq:K3}}}{=} \; \textstyle \sum_{j,k,\ell} \langle \xi, t_j g_k s_l \rangle \; s_j t_l  \otimes p_k
\\
\; & \overset{\textnormal{\eqref{eq:dual-iden}}}{=} \textstyle \sum_{i,j,k,d}  \langle \xi,\;  t_j h_d s_i \rangle\;  \langle \xi_d,\;  g_k \rangle \;  s_j t_i \otimes p_k 
 = \; 
(\ide_H \otimes \kappa) ( \delta _{\textnormal{ref}}(\xi)). 
\end{align*}
}

To verify (ii), take $\xi, \zeta \in (\widehat{H}^*)^{\textnormal{op}}$ and consider the computation below:
{\small
\begin{align*}
\kappa(\xi \zeta)
\; &=\textstyle \sum_{i} \langle \xi \zeta, g_i \rangle \;p_i  \\[.2pc]
\; & \overset{\textnormal{\eqref{eq:hatHpair}}, \; \textnormal{Def.~\ref{def:hatH}}}{=} \textstyle \sum_{i,j',k'} \langle \zeta, t_{k'}  (g_i)_{(1)}  t_{j'} \rangle \; \langle \xi, \; (g_i)_{(2)} s_{j'}  S^{-1}(s_{k'}) \rangle \;p_i\\
\; &\overset{\textnormal{\eqref{eq:K2}}}{=} \textstyle \sum_{j,k,l,m,j',k'} \langle \zeta, \; t_{k'}  t_k  g_l  t^m  t_{j'} \rangle \; \langle \xi, \; g_j  s_k  s^m  s_{j'}  S^{-1}(s_{k'}) \rangle \;p_j p_l
\\[.2pc]
\; &= \textstyle \sum_{j,k,l,k'} \langle \zeta, \; t_{k'}  t_k  g_l \rangle \; \langle \xi, \; g_j S^{-1}(s_{k'} S(s_k)) \rangle \;p_j p_l\\
\; & \overset{\textnormal{\eqref{eq:QT5}}}{=} \textstyle \sum_{j,l} \langle \zeta,  g_l \rangle \; \langle \xi,  g_j) \rangle \;p_j p_l\\[.2pc]
\; &= \kappa(\xi) \kappa(\zeta).
\end{align*}
}

Next, we establish (iii) as follows:
{\small
\begin{align*}
(\ide_H \otimes \kappa)(K_{\textnormal{ref}}(\Bbbk))  \; \overset{\textnormal{\eqref{eq:K-A}}}{=} \; \textstyle \sum_{i,d} \langle \xi_d, g_i \rangle \;  h_d \otimes p_i \; \overset{\textnormal{\eqref{eq:dual-iden}}}{=} \; \textstyle \sum_{i}  g_i \otimes p_i = K.
\end{align*}
}

Finally, we verify (iv). If $K = \textstyle (\ide_H \otimes \kappa')(K_{\textnormal{ref}}(\Bbbk))$, then $\textstyle \sum_i g_i \otimes p_i = \sum_d h_d \otimes \kappa'(\xi_d)$. Applying $\langle \xi_d, - \rangle$ to the first factor yields $\kappa'(\xi_d) = \textstyle \sum_i \langle \xi_d, g_i \rangle \; p_i$, for all $d$. Since $\{\xi_d\}_d$ is a basis for $H^*$, we get that $\kappa'(\xi) = \textstyle \sum_i \langle \xi, g_i \rangle \; p_i$ for all $\xi \in H^*$ as desired.
\end{proof}

One may compare the result above to \cite{Radford1994}*{Theorem~1} on $\Drin(H)$ realized as a universal {\it quasitriangular envelope} of $H$. Moreover, the verification of (ii) in the proof above compares to \cite{BBJ}*{Theorem~4.9} with the distinction that we work with $H$-comodule algebras $A$. Next, consider the following example.

\begin{example}
\label{rem:K-matr-1dim-A-to-gen} 
For the object $(R_H(A), K_{\textnormal{ref}}(A)) \in {}^H \mathsf{QT}$, we obtain via Lemma~\ref{lem:cross-prod}  a canonical algebra embedding 
\[
\iota_A : R_H(\Bbbk) \cong (\widehat{H}^*)^{\textnormal{op}} \hookrightarrow R_H(A).
\]
By Corollary~\ref{cor:RHA-comod}(a), this is an embedding of $H$-comodule algebras, and from~\eqref{eq:K-A} we have \linebreak 
$\iota_A(K_{\textnormal{ref}}(\Bbbk)) = K_{\textnormal{ref}}(A). $
Therefore, $\iota_A$ is the unique homomorphism
$\kappa_{(R_H(A), K_{\textnormal{ref}}(A))}$
from the proof of Theorem \ref{thm:init-obj}.   
\end{example}


\subsection{Example for the Drinfeld double of a finite group}  \label{sec:RA-ex}
In this subsection, we illustrate how our results apply to the case when $H$ is the Drinfeld double of a finite group. 

Take $G$ to be a finite group. Let
$\Bbbk G$ be the group algebra on $G$, and consider its Hopf dual, $(\Bbbk  G)^*$, the algebra of functions on $G$. 
Denote by $\{x\}_{x \in G}$ and 
$\{ \delta_x \}_{x \in G}$ the standard $\Bbbk$-bases of $\Bbbk G$ and $(\Bbbk  G)^*$, respectively. Also, take $\delta_{g,h}$ to be the Kronecker delta function,  for $g,h \in G$.

The Drinfeld double $\Drin(G) := \Drin(\Bbbk G)$ contains $\Bbbk G$ and $((\Bbbk G)^*)^{\textnormal{op}}$ as Hopf subalgebras, and is  $(\Bbbk G)^* \otimes  \Bbbk G$ as a $\Bbbk$-coalgebra. The $\Bbbk$-basis of $\Drin(G)$ is given by $\{ \delta_x y \}_{x, y \in G}$, with product:
\[
\big(\delta_x y \big) \big( \delta_{x'} y' \big) 
= \delta_{x, y x' y^{-1}} \;  \delta_x \; y y',  
\]
for $x, x', y, y' \in G$.  
The  quantum $R$-matrix of $\Drin(G)$ is
\begin{equation}
\label{eq:R-D(G)}
R = \textstyle \sum_{g \in G}  \delta_g \otimes g. 
\end{equation}
Also, as $\Bbbk$-algebras,  $(\Drin(G))^*$ is isomorphic to
$\Bbbk  G \otimes (\Bbbk G)^*$, with $\Bbbk$-basis $\{x \delta_y \}_{x, y \in G}$ and product:
\[
(x \delta_y) (x' \delta_{y'}) = 
\delta_{y,y'} \; x x' \; \delta_y.
\]

Now we illustrate Corollary~\ref{cor:RHA-comod} (and Theorem~\ref{thm:init-obj})  for the left $\Drin(G)$-comodule algebra $\Bbbk$. 

\begin{proposition}
Retain the notation above, and take $H:=\Drin(G)$. Then we obtain that the reflection algebra $R_H(\Bbbk) \cong (\widehat{H}^*)^{\textnormal{op}}$ is a quasitriangular left $H$-comodule algebra as follows:
\begin{enumerate}[\upshape (a)]
\item Its algebra structure  is given as follows, for all $x, y, x', y' \in G$:
\[
\big( x \delta_y \big) \big( x' \delta_{y'} \big)= 
\delta_{y', y^{-1} x^{-1} y x y} \; y^{-1} x y x' y^{-1} x^{-1} y x \; \delta_y.
\]

\smallskip
\item Its left $H$-comodule structure 
is given as follows, for all $x, y \in G$:
\[
\textstyle \delta_{\textnormal{ref}} (x \delta_y) = 
\textstyle \sum_{g \in G} 
\delta_g\;  y^{-1} x y \; \otimes\;  g^{-1} x g \; \delta_{g^{-1} y} \quad \in H \otimes H^* \subset H \otimes R_H(A).
\]

\smallskip
\item Its quantum $K$-matrix is given by $K=\sum_{g,h \in G}  \delta_g h \otimes g \delta_h \in H \otimes R_H(A)$.
\end{enumerate}
\end{proposition}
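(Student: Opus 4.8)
The plan is to specialize the general formulas from Corollary~\ref{cor:RHA-comod} to the case $H:=\Drin(G)$ and $A:=\Bbbk$, where by Example~\ref{ex:specialA} we know $R_H(\Bbbk) \cong (\widehat{H}^*)^{\textnormal{op}}$ as $H$-comodule algebras, and the underlying vector space is $H^* \cong \Drin(G)^* \cong \Bbbk G \otimes (\Bbbk G)^*$ with basis $\{x\delta_y\}_{x,y\in G}$. The strategy is purely computational: each of the three parts follows by substituting the explicit $R$-matrix \eqref{eq:R-D(G)}, namely $R = \sum_{g\in G}\delta_g \otimes g$ (so that $s_i \leftrightarrow \delta_g$ and $t_i \leftrightarrow g$), and the dual basis $\{h_d,\xi_d\}_d = \{\delta_x y, \, x\delta_y\}$ into the abstract formulas, then simplifying using the $\Drin(G)$ multiplication rule $(\delta_x y)(\delta_{x'} y') = \delta_{x,yx'y^{-1}}\,\delta_x\, yy'$ and the dual multiplication $(x\delta_y)(x'\delta_{y'}) = \delta_{y,y'}\,xx'\,\delta_y$.

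First I would establish part~(a). The algebra structure on $R_H(\Bbbk) \cong (\widehat{H}^*)^{\textnormal{op}}$ comes from the multiplication on $\widehat{H}^*$ described in Lemma~\ref{lem:hatHstar}(a), namely $\textnormal{mult}^{\widehat{H}^*}(\xi,\zeta) = \sum_{i,j}(t_i \twoheadrightarrow \xi \twoheadleftarrow S(t_j))(s_i s_j \twoheadrightarrow \zeta)$, remembering to take the opposite product since we want $(\widehat{H}^*)^{\textnormal{op}}$. Here I would need the explicit left/right actions $\twoheadrightarrow$, $\twoheadleftarrow$ of $\Drin(G)$ on its dual from \eqref{eq:twohead}, the antipode of $\Drin(G)$, and the coproduct of a basis element $\delta_x y$ (which is grouplike-type in the $\Bbbk G$ factor and delta-convolution in the function factor). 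Evaluating the double sum over $g\in G$ collapses to a single condition via the Kronecker deltas, producing the stated formula $(x\delta_y)(x'\delta_{y'}) = \delta_{y',\,y^{-1}x^{-1}yxy}\, y^{-1}xyx'y^{-1}x^{-1}yx\,\delta_y$.

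For part~(b) I would substitute into the coaction formula $\delta_{\textnormal{ref}}(\xi) = \sum_{i,j,d}\langle \xi,\, t_j h_d s_i\rangle\, s_j t_i \otimes \xi_d$ from Corollary~\ref{cor:RHA-comod}(a), with $\xi = x\delta_y$, $s_i,s_j$ running over the $\delta_g$'s, $t_i,t_j$ over the $g$'s, and $\{h_d,\xi_d\} = \{\delta_a b,\, a\delta_b\}$. The pairing $\langle x\delta_y,\, t_j h_d s_i\rangle$ forces the element $t_j h_d s_i$ to equal $\delta_x y$ (the element dual to $x\delta_y$), which pins down the summation indices and yields $\delta_{\textnormal{ref}}(x\delta_y) = \sum_{g\in G}\delta_g\, y^{-1}xy \otimes g^{-1}xg\,\delta_{g^{-1}y}$. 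Part~(c) is the most direct: the quantum $K$-matrix $K_{\textnormal{ref}}(\Bbbk) = \sum_d h_d \otimes \xi_d$ from \eqref{eq:K-A} is simply the canonical element $\sum_{g,h\in G}\delta_g h \otimes g\delta_h$ under the identification $R_H(\Bbbk)\cong H^*$, requiring only that we read off the dual basis pairing.

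I expect part~(a) to be the main obstacle. The bookkeeping involved in tracking the two $\twoheadrightarrow/\twoheadleftarrow$ actions together with the antipode $S$ of the Drinfeld double and the \emph{opposite} convention is where sign-free but index-heavy errors are easy to make; in particular one must correctly compute $\widehat{\Delta}$ on a basis element of $\Drin(G)$ (or equivalently dualize carefully), and the several Kronecker deltas arising from the $\delta_g$ factors must be resolved in the right order so that the conjugations $y^{-1}xy$, $g^{-1}xg$, etc., come out exactly as stated. The coaction and $K$-matrix computations, while notationally involved, are more mechanical once part~(a)'s conventions are fixed. Throughout, I would verify consistency by checking that $\delta_{\textnormal{ref}}$ is counital and that $K_{\textnormal{ref}}(\Bbbk)$ indeed satisfies the $K$-matrix axioms \eqref{eq:K2}--\eqref{eq:K1}, which is automatic from Corollary~\ref{cor:RHA-comod} but serves as a useful arithmetic sanity check.
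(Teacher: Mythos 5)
Your proposal is correct and follows essentially the same route as the paper's proof: part (a) by specializing Lemma~\ref{lem:hatHstar}(a) (with the opposite product) using the explicit $R$-matrix \eqref{eq:R-D(G)} and the explicitly computed $\twoheadrightarrow/\twoheadleftarrow$ actions of $\Drin(G)$ on its dual, part (b) by substituting the dual basis $\{\delta_k k',\, k\delta_{k'}\}_{k,k'\in G}$ into the coaction formula of Corollary~\ref{cor:RHA-comod}(a) and resolving the Kronecker deltas (which pin down all but one summation index), and part (c) read off directly from \eqref{eq:K-A}. The paper carries out exactly these computations, so your plan matches its proof step for step.
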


\begin{proof} (a)  To start, we compute:
\[
\langle g \twoheadrightarrow x \delta_y \twoheadleftarrow h, 
\delta_u v \rangle = 
\langle x \delta_y, h \delta_u v g \rangle 
= \langle x \delta_y, \delta_{h u h^{-1}} h v g \rangle = 
\delta_{u, h^{-1} x h} \; \delta_{v, h^{-1} y g^{-1}}
\]
for all $x,y,g,h,u,v \in G$. Thus, for $x,y, g, h \in G$: 
\[
g \twoheadrightarrow x \delta_y \twoheadleftarrow h
= h^{-1} x h \; \delta_{h^{-1} y g^{-1}}.
\]
Analogously, one obtains, for $x,y, g \in G$: 
\[
\delta_g \twoheadrightarrow x \delta_y 
= \delta_{g, y^{-1} x y} \; x \delta_y.
\]
Lemma \ref{lem:hatHstar}(a) and \eqref{eq:R-D(G)} now imply that the product structure of $(\widehat{H}^*)^{\textnormal{op}}$ is given by
{\small
\begin{align*}
(x \delta_y) (x' \delta_{y'}) &= 
\textstyle \sum_{g, h \in G} (g \twoheadrightarrow x' \delta_{y'} \twoheadleftarrow  h^{-1} ) ( \delta_g \delta_{h} \twoheadrightarrow x \delta_y ) 
= 
\textstyle \sum_{g \in G} (g \twoheadrightarrow x' \delta_{y'} \twoheadleftarrow g^{-1} ) ( \delta_g \twoheadrightarrow x \delta_y ) 
\\[.2pc]
&= \textstyle \sum_{g \in G} 
\delta_{g, y^{-1} x y}\; \delta_{g y' g^{-1},y}
\; g x' g^{-1} x \;\delta_y,
\end{align*}
}

\noindent which yields the statement in part (a) after simplifying the right-hand side. 

\smallskip

(b) Note that $\{\delta_k k', k \delta_{k'} \}_{k,k' \in G}$ is a dual basis of $H$.
Corollary \ref{cor:RHA-comod}(a) and \eqref{eq:R-D(G)} imply
{\small
\begin{align*}
\delta_{\textnormal{ref}} (x \delta_y) &= 
\textstyle \sum_{g,h,k,k' \in G} \; \langle x \delta_y,\;  g \delta_k k' \delta_h \rangle \;  \delta_g h \otimes k \delta_{k'}  
\\[.2pc]
&= 
\textstyle \sum_{g,h,k,k' \in G}\;  \langle x \delta_y, \; \delta_{g k g^{-1}} \; \delta_{g k' h (k')^{-1} g^{-1}}\;  g k' \rangle \; \delta_g h \otimes k \delta_{k'} 
\\[.2pc]
&= 
\textstyle \sum_{g,h,k,k' \in G} \;
\delta_{x, g k g^{-1}}\;
\delta_{x, g k' h (k')^{-1} g^{-1}}\;
\delta_{y, gk'} \; \delta_g h \otimes k \delta_{k'}
\\[.2pc]
&= 
\textstyle \sum_{g \in G} 
\delta_g\;   y^{-1} x y \; \otimes\; g^{-1} x g \; \delta_{g^{-1} y},
\end{align*}
}

\noindent where on the fourth line, the delta functions imply 
$k' = g^{-1} y, k = g^{-1} x g$ and 
$h = y^{-1} x y$.
The statement in part (b) is then obtained after simplifying the right-hand side. 

\smallskip

Part (c) follows directly from Corollary~\ref{cor:RHA-comod}(b).
\end{proof}

\subsection{Reflective algebras as comodule algebras over Drinfeld doubles} 
We return to the general standing notation from Section~\ref{sec:setting-repr}. 

The action of $\cZ(\lmod{H})$ on 
$\cE_{\lmod{H}}(\lmod{A})$ from Corollary~\ref{cor:ECMprops}(a) can be explicitly computed. Based on this, one obtains explicit actions of the category of Yetter--Drinfeld modules 
$\lYD{H}$ on the category of Doi--Hopf modules
${}_A^{\widehat{H}} \mathsf{DH}(H)$ and, when $H$ is a finite dimensional Hopf algebra, of 
$\lmod{\Drin(H)}$ on $\lmod{R_H(A)}$. 
The latter result equips $R_H(A)$ with a left $\Drin(H)$-comodule algebra structure. These results and their detailed proofs appear in Section 7 of the ArXiv version 1 of this article. Here, we only record the last result:

\begin{proposition} \label{prop:Drin-RHA-comod}
If $H$ is a finite dimensional Hopf algebra and $A$ a left $H$-comodule algebra, then the reflective algebra $R_H(A)$ is a left $\Drin(H)$-comodule algebra  as follows:
\begin{align*}
\delta^{\textnormal{{\tiny Drin}}}_{\textnormal{ref}}(a) &:= a_{[-1]} \otimes a_{[0]},\\[.3pc]
\delta^{\textnormal{{\tiny Drin}}}_{\textnormal{ref}}(\xi) &:= \textstyle \sum_{k,d,d'} \langle \xi, t_k h_{d'} S^{-1}(h_d) \rangle \; \left((s_k)_{(3)} \twoheadrightarrow \xi_d \twoheadleftarrow S((s_k)_{(1)})\right)(s_k)_{(2)} 
\; \otimes \;\xi_{d'},\\[.3pc]
\delta^{\textnormal{{\tiny Drin}}}_{\textnormal{ref}}(a \; \xi)&:= \delta_{\textnormal{ref}}(a)\; 
\delta^{\textnormal{{\tiny Drin}}}_{\textnormal{ref}}(\xi), 
\end{align*}
for $a \in A$ and $\xi \in (\widehat{H}^*)^{\textnormal{op}}$. Here, recall that $\sum_k s_k \otimes t_k$ is the $R$-matrix of $H$, and that $\{h_d, \xi_d\}_d$,  $\{h_{d'}, \xi_{d'}\}_{d'}$ are dual bases of $H$. \qed
\end{proposition}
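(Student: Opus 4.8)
The plan is to transport the explicit $\cZ(\cC)$-module structure $\widetilde{\act}$ on $\cE_\cC(\cM)$ recorded in Corollary~\ref{cor:ECMprops}(a) across the two isomorphisms $\cZ(\lmod{H}) \overset{\otimes}{\cong} \lmod{\Drin(H)}$ of \eqref{eq:Drin} and $\cE_{\lmod{H}}(\lmod{A}) \cong \lmod{R_H(A)}$ of Theorem~\ref{thm:ECM-DH-brmod-2}, in exactly the manner by which the $\lmod{H}$-module structure was transported in Lemma~\ref{lem:ECM-DH-brmod} and Theorem~\ref{thm:ECM-DH-brmod-2} to yield the $H$-comodule algebra structure $\delta_{\textnormal{ref}}$ of Corollary~\ref{cor:RHA-comod}(a). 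First I would specialize the formula for $e^{V\act M}_X$ in Corollary~\ref{cor:ECMprops}(a) to $\cC=\lmod{H}$, where the module associators $m$ are trivial, the braiding $c$ is given by the $R$-matrix, and the half-braiding $c^V$ of an object $(V,c^V)\in\cZ(\lmod{H})$ is rewritten via \eqref{eq:ZC-YD}--\eqref{eq:YD-Drin} as a genuine left $\Drin(H)$-action $\diam$. This produces an action bifunctor $\lmod{\Drin(H)}\times\lmod{R_H(A)}\to\lmod{R_H(A)}$ sending $(X,M)$ to the vector space $X\otimes_\Bbbk M$ with $\Drin(H)$ acting on the first leg and $R_H(A)$ on the second, in the same diagonal shape as $\widetilde{\ast},\widetilde{\star}$ in Theorem~\ref{thm:ECM-DH-brmod-2}(a).

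The second step is conceptual rather than computational: a $\lmod{\Drin(H)}$-module category structure on $\lmod{R_H(A)}$ whose action on underlying vector spaces is $\otimes_\Bbbk$ with the diagonal shape above is precisely the data of a left $\Drin(H)$-comodule algebra structure on $R_H(A)$. This is the same reconstruction principle invoked for $\delta_{\textnormal{ref}}$ in Corollary~\ref{cor:RHA-comod}(a) (cf. the argument of Lemma~\ref{lem:braid-Kmatr}(c) following \cite{Majid}*{Section~9.4}): the module-associativity and unit constraints force the defining map $\delta^{\textnormal{Drin}}_{\textnormal{ref}}\colon R_H(A)\to\Drin(H)\otimes R_H(A)$ to be coassociative, counital, and multiplicative. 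It therefore suffices to read off this map, and I would do so exactly as in the proof of Corollary~\ref{cor:RHA-comod}(a): apply the transported action to the left regular modules $X=\Drin(H)$ and $M=R_H(A)$ evaluated at the unit elements $x=1$, $m=1$, so that $r\cdot(1\otimes 1)=\delta^{\textnormal{Drin}}_{\textnormal{ref}}(r)$.

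Evaluating on $a\in A\subset R_H(A)$ recovers $\delta^{\textnormal{Drin}}_{\textnormal{ref}}(a)=a_{[-1]}\otimes a_{[0]}$ (with $a_{[-1]}$ viewed inside $H\subset\Drin(H)$), since the $R_H(A)$-action on $M$ untwists as it did for $\widetilde{\ast}$. The substantive computation is the value on $\xi\in(\widehat{H}^*)^{\textnormal{op}}\subset R_H(A)$: here the half-braiding $c^V$ supplies the $\Drin(H)$-module data, the ordinary braiding $c_{V,X}$ supplies the single $R$-matrix insertion $s_k\otimes t_k$, and the reflection $e^M_X$ supplies the $\widehat{H}$-coaction $\textstyle\sum_d h_d\otimes(\xi_d\star-)$ of Lemma~\ref{lem:C-comod}. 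Collecting these contributions over the regular modules and recognizing that the factor $\left((s_k)_{(3)}\twoheadrightarrow\xi_d\twoheadleftarrow S((s_k)_{(1)})\right)(s_k)_{(2)}$ is precisely the $\Drin(H)$-product $s_k\,\xi_d$ of \eqref{eq:Drin-prod} yields the stated formula for $\delta^{\textnormal{Drin}}_{\textnormal{ref}}(\xi)$, with the two dual bases $\{h_d,\xi_d\}_d$ and $\{h_{d'},\xi_{d'}\}_{d'}$ recording the two comodule expansions that arise.

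The main obstacle I anticipate is this last bookkeeping: correctly sequencing the half-braiding (turned into $\diam$), the $R$-matrix insertion, and the $\widehat{H}$-comodule structure while tracking the antipode adjustments $S^{-1}$ coming from $\widehat{H}$ and $S$ coming from the $\Drin(H)$-product, much as the $\Lambda^{\pm1}$ manipulations of Lemma~\ref{lem:Lambda} were needed in Propositions~\ref{prop:functorF} and~\ref{prop:functorG}. A useful independent check, which simultaneously confirms the comodule algebra axioms and the explicit formula, is that restricting $\delta^{\textnormal{Drin}}_{\textnormal{ref}}$ along the Hopf algebra map $H\to\Drin(H)$ induced by the tensor functor $\cC\to\cZ(\cC)$, $V\mapsto(V,c^V)$ with $c^V_X=c_{X,V}$ (noted after Corollary~\ref{cor:ECMprops}), must return the $H$-comodule structure $\delta_{\textnormal{ref}}$ of Corollary~\ref{cor:RHA-comod}(a).
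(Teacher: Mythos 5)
Your proposal is correct and follows essentially the same route as the paper: the paper's proof (detailed in Section 7 of its ArXiv version 1, and summarized before the proposition) likewise computes the $\cZ(\lmod{H})$-action of Corollary~\ref{cor:ECMprops}(a) explicitly, transports it across $\cZ(\lmod{H})\cong\lmod{\Drin(H)}$ and $\cE_{\lmod{H}}(\lmod{A})\cong\lmod{R_H(A)}$, reads off the coaction on regular modules at unit elements as in Corollary~\ref{cor:RHA-comod}(a), and recognizes the factor $\left((s_k)_{(3)}\twoheadrightarrow\xi_d\twoheadleftarrow S((s_k)_{(1)})\right)(s_k)_{(2)}$ as the Drinfeld double product $s_k\,\xi_d$ of \eqref{eq:Drin-prod}. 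One small slip in your closing sanity check: the tensor functor $\cC\to\cZ(\cC)$ corresponds to a Hopf algebra map $\Drin(H)\to H$ (not $H\to\Drin(H)$), and the check is that pushing $\delta^{\textnormal{Drin}}_{\textnormal{ref}}$ forward along it recovers $\delta_{\textnormal{ref}}$.
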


\begin{example} \label{ex:cocom-Drin}
Let us continue Example~\ref{ex:cocom} when $H$ is cocommutative with $R = 1_H \otimes 1_H$. Here, $R_H(A)$ is a left $\Drin(H)$-comodule algebra, where for $a \in A$ and  $\xi \in (\widehat{H}^*)^{\textnormal{op}}$, we have $\delta^{\textnormal{{\tiny Drin}}}_{\textnormal{ref}}(a) = a_{[-1]} \otimes a_{[0]}$ (identified with $(\varepsilon \otimes a_{[-1]}) \otimes (a_{[0]} \otimes \widehat{\varepsilon})$ in $\Drin(H) \otimes R_H(A)$)
and
\[\delta^{\textnormal{{\tiny Drin}}}_{\textnormal{ref}}(\xi) = \textstyle \sum_{d,d'} \langle \xi, h_{d'} S^{-1}(h_d) \rangle \left(1_H \twoheadrightarrow \xi_d \twoheadleftarrow 1_H\right) \; 
 \otimes \;\xi_{d'} 
 = \textstyle \sum_{d,d'} \langle \xi, h_{d'} S^{-1}(h_d) \rangle \; \xi_d \; 
 \otimes \;\xi_{d'}.\]
 Here,  $\xi_d  \otimes \xi_{d'}$ is identified 
 with $(\xi_d \otimes 1_H) \otimes (1_A \otimes \xi_{d'})$ in $\Drin(H) \otimes R_H(A)$.
\end{example}

\section*{Acknowledgements} 
The authors would like to thank Gigel Militaru for his insights and for providing the references in  Remark~\ref{rem:GM}.
We are indebted to the anonymous referee for their careful work and many constructive suggestions, which helped us to improve the exposition of the paper.
R.~Laugwitz was supported by a Nottingham Research Fellowship. C.~Walton was partially supported by the Alexander Humboldt Foundation, and the US National Science Foundation grants \#DMS-2100756, 2348833. C. Walton was also hosted by the University of Hamburg during many phases of this project, and she would like to thank  her hosts for their great hospitality and for providing excellent working conditions.
M.~Yakimov was partially supported by the US National Science Foundation grant \#DMS-2200762.


\bibliography{LWY-2022}
\bibliographystyle{amsrefs}


\end{document}